\newtheorem*{prob}{Problem}
\newtheorem{Lem}{Lemma}[section]
\newtheorem{Cor}[Lem]{Corollary}
\newtheorem{Def}[Lem]{Definition}
\newtheorem{Teo}[Lem]{Theorem}
\newtheorem{Prop}[Lem]{Proposition}
\newtheorem{remark}[Lem]{Remark}
\theoremstyle{definition} 
\newtheorem{Exam}[Lem]{Example}
\newcommand{\ord}{{\rm ord}}
\newcommand{\Nod}{{\rm Nod}}
\newcommand{\Seg}{{\rm Seg}}
\newcommand{\Abn}{{\rm Abn}}
\newcommand{\tord}{{\rm tord}}
\newcommand{\itord}{{\rm itord}}
\newcommand{\Lsing}{{\rm Lsing}}
\newcommand{\Nor}{{\rm Nor}}
\newcommand{\Spec}{{\rm Spec}}
\newcommand{\Int}{{\rm Int}}
\newcommand{\field}[1]{\mathbb{#1}}
\newcommand{\C}{\field{C}}
\newcommand{\R}{\field{R}}
\newcommand{\F}{\field{F}}
\newcommand{\be}{{\bf e}}
\newcommand{\cN}{\mathcal{N}}
\title[Lipschitz Geometry and Combinatorics of Circular Snakes]{Lipschitz Geometry and Combinatorics of Circular Snakes}
\author[A. Costa]{Andr\'e Costa$\sharp$}
\address{$\sharp$Departamento de Matem\'atica, Universidade Estadual do Cear\'a
	(UECE), Campus Itaperi. Av. Dr. Silas Munguba, 1700, Cep. 60714-903. Fortaleza-CE, Brasil}
\email{and.costa@uece.br}
\author[D. L. Medeiros]{Davi Lopes Medeiros$\dagger$}
\thanks{$\dagger$Researcher supported by the Serrapilheira Institute (grant number Serra -- R-2110-39576) and by FAPESP (grant number FAPESP: 2024/13488-6).}
\address{$\dagger$Departamento de Matem\'atica, Universidade Federal do Cear\'a (UFC), Campus do Pici, Bloco 914, Cep.~60455-760, Fortaleza-Ce, Brasil}
\address{$\dagger$Departamento de Matem\'atica, Instituto de Ci\^encias Matem\'aticas e de Computa\c{c}\~ao (ICMC-USP). Avenida Trabalhador São-carlense, 400, Centro, 13566-590. São Carlos, SP, Brasil}
\email{profdavilopes@gmail.com}
\author[E. Souza]{Emanoel Souza$\star$}
\address{$\star$Departamento de Matem\'atica, Universidade Estadual do Cear\'a
	(UECE), Campus Itaperi. Av. Dr. Silas Munguba, 1700, Cep. 60714-903. Fortaleza-CE, Brasil}
\email{emanoel.souza@uece.br}
\subjclass{51F30, 14P10, 03C64, 57K10}
\keywords{Lipschitz Geometry, Circular Snakes, Abnormal surfaces, germ of singularities}
\begin{document}

\begin{abstract}
This paper explores the Lipschitz geometric and combinatorial properties of germs of real semialgebraic surfaces (or, more generally, definable in a polynomially bounded o-minimal structure) with circular link (homeomorphic to the circle $\mathbb{S}^1$). We define and investigate the outer Lipschitz geometry of the so-called circular snakes, showing what results in \cite{GabrielovSouza} valid to snakes still holds for the circular case. We prove the existence of a canonical decomposition for the Valette link of a circular snake into finitely many segments and
nodal zones and establish some necessary and sufficient criteria to determine
when it is possible to obtain a snake from a circular snake by “removing” either one of its segments or a
H\"older triangle whose Valette link is contained in one of its nodal zones. We construct a combinatorial object associated with a circular snake and prove a realization theorem for this combinatorial object. We also present a weakly outer Lipschitz classification for circular snakes. Finally, we show some results about the combinatorics of binary circular snakes, which is wildly distinct from the corresponding case shown in \cite{GabrielovSouza}.
\end{abstract}

\maketitle

\section{Introduction}


For the last twenty five years, Lipschitz geometry has attracted considerable attention as a natural approach to classifying singularities that are intermediate between their diffeomorphic (too fine) and topological (too coarse) equivalence.
In particular, the finiteness theorems in \cite{Mostowski85Dissertation} and \cite{Parusinski94} suggest the possibility of an effective bi-Lipschitz classification of definable real surface germs.

As evidenced in \cite{LBirbMosto2000NormalEmbedding}, any singular germ of a semialgebraic (or definable) set $X$ inherits two metrics from the ambient space: the inner metric where the distance between two points of $X$ is the length of the shortest path connecting them inside $X$, and the outer metric with the distance between two points of $X$ being just their distance in the ambient space. This defines two classification problems: equivalence up to bi-Lipschitz homeomorphisms for the inner and outer metrics, or simply, the ``inner classification problem'' and the ``outer classification problem''. Those classification problems and their variants are object of intense research, and some interesting examples of it are the Lipschitz classification of analytic complex plane curves in \cite{Pham-Teissier}, \cite{Fernandes2003}, \cite{neumann-pichon} and \cite{targino}; the thick-thin decomposition in \cite{thick-thin}; the deformation of weighted-homogeneous foliations in \cite{kerner-mendes}; the $\mathcal{K}$-equivalence of real function germs in \cite{K-equivalence}; and the ambient Lipschitz results in \cite{sampaio2016} regarding tangent cones, in \cite{Medeiros-amb} classifying isolated, Lipschitz normally embedded singularities in $\R^3$, and the metric knots in \cite{BBG} and \cite{microknot}.

When $X$ is a surface germ, the problems above have different outcomes. The inner classification problem was solved by Birbrair in \cite{birbrair1999local} and \cite{birbrairOminimal}, but the outer classification problem remains open. 
For the Lipschitz normally embedded singularities, the inner and outer metrics are equivalent, and thus, the two classifications are the same. It was proved in \cite{KurdykaOrro97} that any semialgebraic set can be decomposed into the union of finitely many Lipschitz normally embedded semialgebraic sets. Later, Birbrair and Mostowski called this decomposition a ``pancake decomposition'' in \cite{LBirbMosto2000NormalEmbedding} and they used Kurdyka's construction to prove that any semialgebraic set is inner Lipschitz equivalent to a Lipschitz normally embedded semialgebraic set.

The classification of surface germs with respect to the outer metric is much more complicated. A singular germ $X$ can be considered as the family ${X_t}$ of its links (intersections with the spheres of a small radius $t > 0$). Hence, the Lipschitz geometry of $X$ can be understood as the dynamics of $X_t$ as $t \rightarrow 0$. For this purpose one investigates the “Valette link” of $X$, the family of arcs in $X$ parameterized by the distance to the origin. The outer Lipschitz invariants of $X$ are described in terms of the tangency orders between those arcs. Other outer Lipschitz invariants were developed in order to distinguish two germs, such as volume growth introduced in \cite{metric-homology}, the vanishing homology in \cite{vanishing-homology}, and the moderately discontinuous homology, defined in the seminal paper \cite{moderately-homology} and studied for real surface germs in \cite{moderately-homology-surfaces}.

The first step towards the outer metric classification of surface germs was made
in \cite{PizzaPaper2017}, where the authors introduced the concept of the so-called pizza. Roughly, the pizzas captures a monotonic behavior of the change of outer Lipschitz contact in surfaces. Later on, the paper \cite{GabrielovSouza} was the next step towards the outer metric classification of surface germs. They identified basic ``abnormal'' parts of a surface germ, called snakes, and investigated their geometric and combinatorial properties. Indeed, they provided a canonical outer Lipschitz invariant decomposition of the Valette link of a snake into ``segments'' and ``nodal zones'' (which are Lipschitz normally embedded in the case where the surface has at least one nodal arc). Moreover, they solve a weak version of the outer bi-Lipschitz classification, called the weakly outer bi-Lipschitz classification (see Theorem 6.28 of \cite{GabrielovSouza}). One of the most recent contributions to the outer Lipschitz classification problem is the introduction of the $\sigma\tau$-pizza in \cite{PairsofHT} and its application to determine the outer Lipschitz geometry of pairs of Lipschitz normally embedded pairs of H\"older triangles.

Since most of the outer Lipschitz classification tools were made for surface germs whose link is homeomorphic to a segment, a natural approach is to develop such a geometric theory for surface germs whose link is homeomorphic to a circle. One must also construct a specific combinatorial theory to describe the Lipschitz geometry of circular snakes, and those are the main objectives of this paper.

The article is organized as follows. To make this paper self-contained, in Section 2 we present the basic definitions and some important results
that will be used later. Subsection 2.1 regards the basic definitions and results in Lipschitz geometry needed to understand the paper. In Subsection 2.2, we introduce the pancake and pizza decomposition and its main properties. Subsection 2.3 contains the first definitions in the ``zonology'' of surface germs and present the definition of snake. Finally, Subsection 2.4 gives a brief presentation of the main definitions and results regarding Lipschitz functions on H\"older triangles presented in \cite{GabrielovSouza}. 

In Section \ref{Section: Circular Snakes}, we present the definition of circular snake and prove our main results about the decomposition of its Valette's link into Lipschitz invariant zones. We also prove in this section that this decomposition induces a canonical pancake decomposition for a circular snake. In Section \ref{Section: CS vs Snakes}, we address the matter of determining criteria to obtain a snake from a circular snake. In particular, we prove that there circular snakes such that no snake can be obtained from it by applying the procedure established of ``removing'' segments or a ``piece of a nodal zone''. The main consequence of this fact is that if one intend to associate a Lipschitz invariant combinatorial object (such as a word) to a circular snake, a Realization Result as Theorem 6.23 of \cite{GabrielovSouza} will not easily follows from it.

The goal of Section 5 is to define circular snake names, which is the combinatorial object that describes circular snakes. In Section 6, we establish the relation between circular snakes and circular snake names. Section 7 is devoted to show that every circular snake name has a circular snake associated with it. In Section 8, the weakly bi-Lipschitz maps in surfaces with circular link are introduced. We show that the circular snake name is, in some sense, a complete weakly bi-Lipschitz invariant for circular snakes with at least one nodal zone. For circular snakes without nodal zones, we show that the multiplicity is the required invariant. Using this result, we give a quick proof of one of the main results in \cite{Fernandes2003}, showing how the Puiseux pairs determines the outer Lipschitz geometry of complex plane curves. Finally, in Section 9, we study binary circular snakes, showing how to reduce any circular snake name to a binary one. We also propose a question about the number of binary circular snake names, and we show that this is surprisingly not analogous with the respective problem for binary snakes.

We would like to thank Lev Birbrair and Andrei Gabrielov for productive discussions on Lipschitz geometry of surfaces, and for pointing out the main needs for the construction of the combinatorial theory of circular snakes, giving us the direction of this article.

\section{Preliminaries}\label{Section: Preliminaries}

All sets, functions and maps in this paper are assumed to be definable in a polynomially bounded o-minimal structure over $\mathbb{R}$ with the field of exponents $\mathbb{F}$, for example, real semialgebraic or subanalytic (see \cite{LvandenDRIES98} and \cite{DRIES-SPREISSEGGER}). Unless the contrary is explicitly stated, we consider germs at the origin of all sets and maps.

\subsection{Basic concepts in Lipschitz geometry}\label{Subsec: Holder triangles}

\begin{Def}\label{DEF: inner, outer and normally embedded}
	 Given a germ at the origin of a set  $X\subset \mathbb{R}^{n}$ we can define two metrics on $X$, the \em outer metric \em $d(x,y)=||x-y||$ and the \em inner metric \em $d_{i}(x,y)=\inf_{\alpha}\{l(\alpha)\}$, where $l(\alpha)$ is the length of a rectifiable path $\alpha$ from $x$ to $y$ in $X$. Note that such a path $\alpha$ always exists since $X$ is definable. A set $X \subset \R^{n}$ is \em Lipschitz normally embedded \em (LNE for short) if the outer and inner metrics are equivalent.
\end{Def}

\begin{remark}
	 The inner metric is not definable, but it is equivalent to a definable metric (see \cite{KurdykaOrro97}), for example, the \em pancake metric \em (see \cite{LBirbMosto2000NormalEmbedding}).
\end{remark}

\begin{Def}\label{Def: arc}
		An \em arc \em in $\mathbb{R}^{n}$ is a germ at the origin of a mapping $\gamma \colon [0,\epsilon) \longrightarrow \mathbb{R}^{n}$ such that $\gamma(0) = 0$. Unless otherwise specified, we suppose that arcs are parameterized by the distance to the origin, i.e., $||\gamma(t)||=t$. We usually identify an arc $\gamma$ with its image in $\mathbb{R}^{n}$. For a germ at the origin of a set $X$, the set of all arcs $\gamma \subset X$ is denoted by $V(X)$ (known as the \em Valette link \em of $X$, see \cite{valette2007link}).
\end{Def}

\begin{Def}\label{DEF: order of tangency}
	 The \em tangency order \em of two arcs $\gamma_{1}$ and $\gamma_{2}$ in $V(X)$ (notation $\tord(\gamma_{1},\gamma_{2})$) is the exponent $q$ where $||\gamma_{1}(t) - \gamma_{2}(t)|| = ct^{q} + o(t^{q})$ with $c\neq 0$. By convention, $\tord(\gamma,\gamma)=\infty$. For an arc $\gamma$ and a set of arcs $Z \subset V(X)$, the tangency order of $\gamma$ and $Z$ (notation $\tord(\gamma, Z)$), is the supremum of $\tord(\gamma, \lambda)$ over all arcs $\lambda \in Z$. The tangency order of two sets of arcs $Z$ and $Z'$ (notation $\tord(Z,Z')$) is the supremum of $\tord(\gamma, Z')$ over all arcs $\gamma \in Z$. Similarly, we define the tangency orders in the inner metric of $X$, denoted by $\itord(\gamma_{1},\gamma_{2}),\; \itord(\gamma, Z)$ and $\itord(Z,Z')$.
\end{Def}

\begin{remark}\label{Rem: non-archimedean property}
	 An interesting fact about the tangency order of arcs in $\R^n$ is the so called ``non-archimedean property'' (it first appeared in \cite{B.F.METRIC-THEORY} as ``Isosceles property''): given arcs $\gamma_1,\gamma_2,\gamma_3$ in $\R^n$, we have $$\tord(\gamma_2,\gamma_3) \ge \min(\tord(\gamma_1,\gamma_2),\tord(\gamma_1,\gamma_3)).$$ If $\tord(\gamma_1,\gamma_2)\ne \tord(\gamma_1,\gamma_3)$ then $\tord(\gamma_2,\gamma_3) = \min(\tord(\gamma_1,\gamma_2),\tord(\gamma_1,\gamma_3))$.
\end{remark}

\begin{Def}\label{DEF: standard Holder triangle}
	 For $\beta \in \mathbb{F}$, $\beta \ge 1$, the \em standard \em $\beta$-\em H\"older triangle \em $T_\beta\subset\R^2$ is the germ at the origin of the set
	\begin{equation*}
		T_\beta = \{(x,y)\in \R^2 \mid 0\le x\le 1, \; 0\le y \le x^\beta\}.
	\end{equation*}
	The curves $\{x\ge 0,\; y=0\}$ and $\{x\ge 0,\; y=x^\beta\}$ are the \em boundary arcs \em of $T_\beta$.

    For $\beta \in \mathbb{F}$, $\beta \ge 1$, the \em standard \em $\beta$-\em Horn \em $H_\beta\subset\R^2$ is the germ at the origin of the set
	\begin{equation*}\label{Formula:Standard Holder triangle}
		T_\beta = \{(x,y,z)\in \R^3 \mid z\ge 0, \; x^2+y^2=z^{2\beta}\}.
	\end{equation*}
\end{Def}

\begin{Def}\label{DEF: Holder triangle}
	 A germ at the origin of a set $T \subset \mathbb{R}^{n}$ that is bi-Lipschitz equivalent with respect to the inner metric to the standard $\beta$-H\"older triangle $T_\beta$ is called a $\beta$-\em H\"older triangle \em (see \cite{birbrair1999local}).
	The number $\beta \in \mathbb{F}$ is called the \em exponent \em of $T$ (notation $\beta=\mu(T)$). The arcs $\gamma_{1}$ and $\gamma_{2}$ of $T$ mapped to the boundary arcs of $T_\beta$ by the homeomorphism are the \em boundary arcs \em of $T$ (notation $T=T(\gamma_{1},\gamma_{2})$). All other arcs of $T$ are \em interior arcs \em. The set of interior arcs of $T$ is denoted by $I(T)$.

    A germ at the origin of a set $H \subset \mathbb{R}^{n}$ that is bi-Lipschitz equivalent with respect to the inner metric to the standard $\beta$-Horn $H_\beta$ is called a $\beta$-\em Horn \em (see \cite{birbrair1999local}).
	The number $\beta \in \mathbb{F}$ is called the \em exponent \em of $H$ (notation $\beta=\mu(H)$).
\end{Def}

\begin{remark}\label{Rem: NE HT condition}
	It was proved in \cite{birbrair1999local} that $\mu(T)$ and $\mu(H)$ are inner bi-Lipschitz invariants. Moreover, it was proved in \cite{birbrair2018arc}, using the Arc Selection Lemma (see Theorem 2.2), that a H\"older triangle $T$ is Lipschitz normally embedded if, and only if, $\tord(\gamma,\gamma')=\itord(\gamma,\gamma')$ for any two arcs $\gamma$ and $\gamma'$ of $T$.
\end{remark}

\begin{Def}\label{DEF: Lipschitz non-singular arc}
	 Let $X$ be a surface (a two-dimensional set). An arc $\gamma \subset X$ is \em Lipschitz non-singular \em if there exists a Lipschitz normally embedded H\"older triangle $T \subset X$ such that $\gamma$ is an interior arc of  $T$ and $\gamma \not\subset \overline{X\setminus T}$. Otherwise, $\gamma$ is \em Lipschitz singular \em. In particular, any interior arc of a Lipschitz normally embedded H\"older triangle is Lipschitz non-singular. The union of all Lipschitz singular arcs in $X$ is denoted by $\Lsing(X)$.
\end{Def}

\begin{remark}\label{Rem: boundary arcs are Lips sing arcs}
	 It follows from pancake decomposition (see Definition \ref{Def: pancake decomposition}) that a surface $X$ contains finitely many Lipschitz singular arcs. For an interesting example of a Lipschitz singular arc see Example 2.11 of \cite{GabrielovSouza}. Arcs which are boundary arcs of H\"older triangles or self-intersections of the surface are trivial examples of Lipschitz singular arcs.
\end{remark}

\begin{Def}\label{DEF: non-singular Holder triangle}
	 A H\"older triangle $T$ is \em non-singular \em if all interior arcs of $T$ are Lipschitz non-singular.
\end{Def}

\begin{Def}\label{Def: generic arc of a surface}
	 Let $X$ be a surface germ with connected link. The \em exponent \em $\mu(X)$ of $X$ is defined as $\mu(X)=\min\, \itord(\gamma,\gamma')$, where the minimum is taken over all arcs $\gamma,\,\gamma'$ of $X$.
	A surface $X$ with exponent $\beta$ is called a $\beta$-surface. An arc $\gamma \subset X\setminus \Lsing(X)$ is \em generic \em if $\itord(\gamma,\gamma') = \mu(X)$ for all arcs $\gamma'\subset \Lsing(X)$. The set of generic arcs of $X$ is denoted by $G(X)$. 
\end{Def}

\begin{remark}\label{Rem: generic arcs of a non-singular HT}
	 If $X=T(\gamma_{1},\gamma_{2})$ is a non-singular $\beta$-H\"older triangle then an arc $\gamma\subset X$ is \em generic \em if, and only if, $\itord(\gamma_{1},\gamma) = \itord(\gamma,\gamma_{2}) = \beta$.
\end{remark}

\subsection{Pancake and pizza decompositions}\label{subsection:pancake decomposition}

\begin{Def}\label{Def: pancake decomposition}
	 Let $X\subset \mathbb{R}^{n}$ be the germ at the origin of a closed set. A \em pancake decomposition \em of $X$ is a finite collection of closed LNE subsets $X_{k}$ of $X$ with connected links, called \em pancakes\em, such that $X=\bigcup X_{k}$ and $$\dim(X_{j}\cap X_{k}) < \min(\dim(X_{j}),\dim(X_{k}))\quad\text{for all}\; j,k.$$
\end{Def}

\begin{remark}\label{Rem: existence of pancake decomp}
	 The term ``pancake'' was introduced in \cite{LBirbMosto2000NormalEmbedding}, but this notion first appeared (with a different name) in \cite{kurdyka1992subanalytic} and \cite{KurdykaOrro97}, where the existence of such decomposition was established.
\end{remark}

\begin{remark}\label{Rem:pancake of holder triangle is holder triangle}
	 If $X$ is a H\"older triangle and $\{X_k\}_{i=1}^p$ is its pancake decomposition, then each pancake $X_k$ is also a H\"older triangle. Moreover, if $X$ is a non LNE surface germ and has circular link, then $p>1$, each pancake is a H\"older triangle, denoted by $X_k=T(\gamma_{k-1},\gamma_k)$, and $\gamma_0 = \gamma_{p}$.  
\end{remark}

\begin{Def}\label{Def: reduced pancak decomp}
	 A pancake decomposition $\{X_{k}\}$ of a set $X$ is \em reduced \em if the union of any two adjacent pancakes $X_{j}$ and $X_{k}$ (such that $X_{j}\cap X_{k}\ne \{0\}$) is not LNE.
\end{Def}

\begin{remark}
	 When the union of two adjacent pancakes is LNE, they can be replaced by their union, reducing the number of pancakes. Thus, a reduced pancake decomposition always exists. 
\end{remark}

In the following, we use the definitions and results of \cite{PizzaPaper2017}.

\begin{Def}\label{DEF: order of a function on an arc}
	 Let $f\not\equiv 0$ be a germ at the origin of a Lipschitz function defined on an arc $\gamma$. The \em order \em of $f$ on $\gamma$, denoted by $\ord_{\gamma}f$, is the value $q \in \mathbb{F}$ such that $f(\gamma(t)) = ct^{q} + o(t^{q})$ as $t \rightarrow 0$, where $c\neq 0$. If $f\equiv 0$ on $\gamma$, we set $\ord_{\gamma}f = \infty$.
\end{Def}

\begin{Def}\label{DEF: interval Q of a pizza slice}
	 Let $T\subset \mathbb{R}^{n}$ be a H\"older triangle, and let $f\colon (T,0) \rightarrow (\mathbb{R},0)$ be a Lipschitz function. We define $$Q_{f}(T)=\bigcup_{\gamma\in V(T)}\{ \ord_{\gamma}f\}.$$
\end{Def}

\begin{remark}\label{Rem:Q_{f}(T) is a segment}
	 It was shown in \cite{PizzaPaper2017} that $Q_{f}(T)$ is a closed segment in $\mathbb{F}\cup \{\infty\}$.
\end{remark}

\begin{Def}\label{DEF: Elementary Holder triangle}
	 A H\"older triangle $T$ is \em elementary \em with respect to a Lipschitz function $f$ if, for any two distinct arcs $\gamma$ and $\gamma'$ in $T$ such that $\ord_{\gamma}f=\ord_{\gamma'}f=q$, the order of $f$ is $q$ on any arc in the H\"older triangle $T(\gamma,\gamma')\subset T$.
\end{Def}

\begin{Def}\label{Def:width function}
	 Let $T\subset \mathbb{R}^{n}$ be a H\"older triangle and $f\colon (T,0) \rightarrow (\mathbb{R},0)$ a Lipschitz function. For each arc $\gamma \subset T$, the \em width \em $\mu_{T}(\gamma,f)$ of $\gamma$ with respect to $f$ is the infimum of the exponents of H\"older triangles $T'\subset T$ containing $\gamma$ such that $Q_{f}(T')$ is a point. For $q\in Q_{f}(T)$ let $\mu_{T,f}(q)$ be the set of exponents $\mu_{T}(\gamma,f)$, where $\gamma$ is any arc in $T$ such that $\ord_{\gamma}f=q$. It was shown in \cite{PizzaPaper2017} that the set $\mu_{T,f}(q)$ is finite. This defines a multivalued \em width function \em $\mu_{T,f}\colon Q_{f}(T)\rightarrow \mathbb{F}\cup \{\infty\}$. When $f$ is fixed, we write $\mu_{T}(\gamma)$ instead of $\mu_{T}(\gamma,f)$ and $\mu_{T}$ instead of $\mu_{T,f}$. If $T$ is an elementary H\"older triangle with respect to $f$ then the function $\mu_{T,f}$ is single valued.
\end{Def}

\begin{Def}\label{DEF: pizza slice}
	 Let $T$ be a H\"older triangle and $f\colon (T,0)\rightarrow (\mathbb{R},0)$ a Lipschitz function. We say that $T$ is a \em pizza slice \em associated with $f$ if it is elementary with respect to $f$ and $\mu_{T,f}(q)=aq+b$ is an affine function.
\end{Def}

\begin{Prop}\label{Prop:width function properties elementary triangle}
	\emph{(See \cite{PizzaPaper2017})} Let $T$ be a $\beta$-H\"older triangle, $f$ a Lipschitz function on $T$ and $Q=Q_{f}(T)$. If $T$ is a pizza slice associated with $f$ then:
 
	\begin{enumerate}
		\item $\mu_{T}$ is constant only when $Q$ is a point;
		
		\item $\beta\le\mu_{T}(q)\le q$ for all $q \in Q$;
		
		\item $\mu(\ord_{\gamma}f)=\beta$ for all $\gamma \in G(T)$;
		\item If $Q$ is not a point, let $\mu_0=\max_{q\in Q}\mu_T(q)$, and let $\gamma_0$ be the boundary arc of $T$ such that $\mu_T(\gamma_0)=\mu_0$. Then $\mu_T(\gamma)=\itord(\gamma_0,\gamma)$
		for all arcs $\gamma\subset T$ such that $\itord(\gamma_0,\gamma)\le\mu_0$.
	\end{enumerate}
\end{Prop}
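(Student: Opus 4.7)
The plan is to prove the four items in a slightly different order, namely (2), (3), (4), (1), since each relies on the preceding ones.

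For item (2), I would first establish the lower bound $\mu_T(q)\ge \beta$. Any sub-H\"older triangle $T'\subset T$ has boundary arcs $\gamma_1',\gamma_2'$ that are arcs of $T$, so $\mu(T')=\itord(\gamma_1',\gamma_2')\ge \beta$ by the characterization of the exponent of a H\"older triangle together with the non-archimedean property (Remark~\ref{Rem: non-archimedean property}) applied to arcs of $T$. Taking the infimum over admissible $T'$ gives $\mu_T(\gamma)\ge\beta$. For the upper bound $\mu_T(q)\le q$, fix $\gamma$ with $\ord_\gamma f=q$ and any $\ve>0$: construct a thin H\"older sub-triangle $T_\ve\subset T$ of exponent $q+\ve$ containing $\gamma$ as an interior arc. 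For any $\gamma'\subset T_\ve$ one has $\|\gamma(t)-\gamma'(t)\|=O(t^{q+\ve})$, and because $f$ is Lipschitz, $|f(\gamma(t))-f(\gamma'(t))|=O(t^{q+\ve})=o(t^q)$, which forces $\ord_{\gamma'}f=q$. Hence $Q_f(T_\ve)=\{q\}$ and $\mu_T(q)\le q+\ve$; letting $\ve\to 0$ finishes the bound.

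For item (3), take $\gamma\in G(T)$ with $q=\ord_\gamma f$. By the elementarity of $T$ with respect to $f$, the set $L_q=\{\gamma'\subset T:\ord_{\gamma'}f=q\}$ is either $\{\gamma\}$ or a H\"older sub-triangle of $T$ on which $Q_f$ is the single point $\{q\}$; in either case $L_q$ is admissible in the definition of $\mu_T(\gamma)$. Using Remark~\ref{Rem: generic arcs of a non-singular HT}, $\gamma$ being generic means $\itord(\gamma,\gm_1)=\itord(\gamma,\gm_2)=\beta$ for the boundary arcs $\gm_1,\gm_2$ of $T$, and the non-archimedean property then forces the exponent of the triangle $L_q$ (whose boundary arcs have tangency $\ge\beta$ with $\gamma$) to equal $\beta$. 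Combined with the lower bound in (2), this gives $\mu_T(\gamma)=\beta$, hence $\mu_{T,f}(q)=\beta$.

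For item (4), set $q_0=\ord_{\gamma_0}f$, an endpoint of $Q$ since $\mu_T(q_0)=\mu_0$ is maximal and $\mu_{T,f}$ is affine. Fix $\gamma$ with $r=\itord(\gamma_0,\gamma)\le\mu_0$. The H\"older triangle $T(\gamma_0,\gamma)\subset T$ has exponent $r$, so the same Lipschitz argument as in (2) gives $\mu_T(\gamma)\le r$. For the reverse inequality, suppose a H\"older sub-triangle $T''$ of exponent $\mu<r$ contains $\gamma$ with $Q_f(T'')$ a single point. Then $\gamma_0\notin T''$ (otherwise $\mu\le\itord(\gamma_0,\gamma)=r$), and by elementarity the layer through $\gamma_0$ at exponent $\mu_0$ would have to intersect $T''$, contradicting that $T''$ collapses $f$ to one value while the affine dependence $\mu_{T,f}(q)=aq+b$ forces different layer exponents for distinct $q$ values in $Q$. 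This yields $\mu_T(\gamma)\ge r$, and combining gives $\mu_T(\gamma)=\itord(\gamma_0,\gamma)$.

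Finally, item (1) follows by contrapositive. Assume $Q$ is not a point and $\mu_T$ constant. By (3), the constant value is $\beta$, so $a=0$ and $b=\beta$. But item (4) gives an arc $\gamma_0$ realizing $\mu_0=\max\mu_T$, and then any arc $\gamma$ with $\itord(\gamma_0,\gamma)=\mu_0$ satisfies $\mu_T(\gamma)=\mu_0$; since $Q$ is not a point and $\mu_{T,f}$ is non-constant on a nontrivial interval while being forced to a single value $\beta$, we obtain a contradiction at any $\gamma$ with $\itord(\gamma_0,\gamma)>\beta$. The main obstacle I anticipate is item (4): carefully justifying both inequalities for $\mu_T(\gamma)$ when $\gamma$ is not close to either boundary of $T$ requires a delicate use of elementarity together with the affine pizza-slice hypothesis, and one has to make sure that the ``layers'' $L_q$ fit together compatibly with the interior-arc structure of $T$.
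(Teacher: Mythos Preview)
The paper does not prove this proposition; it is quoted from \cite{PizzaPaper2017} and stated without argument. So there is no ``paper's proof'' to compare against, and your attempt must stand on its own. Unfortunately several steps do not go through.

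In item (3), your claim that the level set $L_q$ has exponent equal to $\beta$ is not justified. The non-archimedean property only gives $\itord\ge\beta$ for arcs of $T$, hence $\mu(L_q)\ge\beta$; it does not force equality. What item (3) actually asserts is stronger than you seem to realize: since $\mu_{T,f}$ is affine and (when $Q$ is not a point) injective, the equation $\mu_{T,f}(q)=\beta$ has a \emph{single} solution $q_*$, so the statement implies that \emph{all} generic arcs of $T$ share the same order $q_*$. Your argument never addresses why generic arcs cannot have different orders.

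In item (4), your ``upper bound'' step is incorrect. You write that since $T(\gamma_0,\gamma)$ has exponent $r$, ``the same Lipschitz argument as in (2) gives $\mu_T(\gamma)\le r$''. But the Lipschitz argument of (2) yields $\mu_T(\gamma)\le \ord_\gamma f$, not $\mu_T(\gamma)\le \itord(\gamma_0,\gamma)$; these are unrelated quantities in general. You have not exhibited any sub-triangle of exponent close to $r$ on which $Q_f$ is a single point. The lower-bound paragraph is also too vague: the phrase ``the layer through $\gamma_0$ at exponent $\mu_0$ would have to intersect $T''$'' is an assertion, not an argument, and you have not used the affine hypothesis in any concrete way.

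Finally, your proof of item (1) is circular. You assume $Q$ is not a point and $\mu_T$ is constant, and then derive a contradiction from the claim that ``$\mu_{T,f}$ is non-constant on a nontrivial interval'' --- which is precisely what you are trying to prove. Invoking item (4) does not help here, since (4) already presupposes that $\mu_T$ attains a maximum $\mu_0$ strictly larger than its other values, i.e., that $\mu_T$ is non-constant.

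A workable route is closer to the original source: first prove (2); then prove (4) directly by analyzing, for each $r\le\mu_0$, the maximal sub-triangle of $T$ bounded by $\gamma_0$ on which $\ord f$ is constant, using elementarity to identify its far boundary; deduce (1) from (4) and (2); and obtain (3) as a consequence of (4) (a generic arc has $\itord(\gamma_0,\gamma)=\beta$), treating the case where $Q$ is a point separately (then $T$ itself witnesses $\mu_T\le\beta$).
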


\begin{Def}\label{Def:Pizza decomp}
	 A decomposition $\{T_i\}$ of a H\"older triangle $X$ into $\beta_{i}$-H\"older triangles $T_{i}=T(\lambda_{i-1},\lambda_{i})$ such that $T_{i-1}\cap T_{i}=\lambda_{i}$ is a \em pizza decomposition \em of $X$ (or just a \em pizza \em on $X$) associated with $f$ if each $T_{i}$ is a pizza slice associated with $f$. We write $Q_{i}=Q_{f}(T_{i})$, $\mu_{i}=\mu_{T_{i},f}$ and $q_i=\ord_{\lambda_i}f$. 
\end{Def}

\begin{remark}\label{REM: existence of pizza slice}
	 The existence of a pizza associated with a function $f$ was proved in \cite{PizzaPaper2017} for a function defined in $(\R^{2},0)$. The same arguments prove the existence of a pizza associated with a function defined on a H\"older triangle as in Definition \ref{Def:Pizza decomp}. The results mentioned in this subsection remain true when $f$ is a Lipschitz function on a H\"older triangle $T$ with respect to the inner metric, although in this paper we need them only for Lipschitz functions with respect to the outer metric.
\end{remark}

\begin{Def}\label{Def:minimal pizza}
	 A pizza $\{T_i\}_{i=1}^{p}$ associated with a function $f$ is \em minimal \em if, for any $i\in \{2,\ldots, p\}$, $T_{i-1}\cup T_i$ is not a pizza slice associated with $f$.
\end{Def}

\begin{Def}\label{DEF: multipizza}
	 Consider the set of germs of Lipschitz functions $f_{l}\colon (X,0) \rightarrow (\mathbb{R},0),\;l=1,\ldots , m$, defined on a H\"older triangle $X$. A \em multipizza \em on $X$ associated with $\{f_{1},\ldots, f_{m}\}$ is a decomposition $\{T_{i}\}$ of $X$ into $\beta_{i}$-H\"older triangles which is a pizza on $X$ associated with $f_{l}$ for each $l$.
\end{Def}

\begin{remark}\label{REM: existence of a multipizza}
	 The existence of a multipizza follows from the existence of a pizza associated with a single Lipschitz function $f$, since a refinement of a pizza associated with any function $f$ is also a pizza associated with $f$.
\end{remark}

\subsection{Zones and snakes}\label{Subsec: Zones}
In this subsection, $(X,0)\subset (\R^n,0)$ is a surface germ.

\begin{Def}\label{Def: zone}
	 A nonempty set of arcs $Z \subset V(X)$ is a \em zone \em if, for any two distinct arcs $\gamma_{1}$ and $\gamma_{2}$ in $Z$, there exists a non-singular H\"older triangle $T=T(\gamma_{1},\gamma_{2}) \subset X$ such that $V(T) \subset Z$. If $Z = \{\gamma\}$ then $Z$ is a \em singular zone \em.
\end{Def}

\begin{Def}\label{Def: maximal zone in}
	 Let $B \subset V(X)$ be a nonempty set. A zone $Z\subset B$ is \em maximal in \em $B$ if, for any H\"older triangle $T$ such that $V(T) \subset B$, one has either $Z\cap V(T)=\emptyset$ or $V(T) \subset Z$.
\end{Def}

\begin{remark}
	 A zone could be understood as an analog of a connected subset of $V(X)$, and a maximal zone in a set $B$ is an analog of a connected component of $B$.
\end{remark}

\begin{Def}\label{Def:order of zone}
	 The \em order \em $\mu(Z)$ of a zone $Z$ is the infimum of $\tord(\gamma,\gamma')$ over all arcs $\gamma$ and $\gamma'$ in $Z$. If $Z$ is a singular zone then $\mu(Z) = \infty$. A zone $Z$ of order $\beta$ is called a $\beta$-zone.
\end{Def}

\begin{remark}\label{Rem: replace tord by itord in def of order of a zone}
	 The tangency order can be replaced by the inner tangency order in Definition \ref{Def:order of zone}. Note that, for any arc $\gamma \in Z$, $\inf_{\gamma'\in Z}\tord(\gamma,\gamma')=\inf_{\gamma'\in Z}\itord(\gamma,\gamma')=\mu(Z)$.
\end{remark}

\begin{Def}\label{Def: NE zone}
	 A zone $Z$ is Lipschitz normally embedded if, for any two arcs $\gamma$ and $\gamma'$ in $Z$, there exists a LNE H\"older triangle $T=T(\gamma,\gamma')$ such that $V(T)\subset Z$.
\end{Def}

\begin{Def}\label{Def: open, closed and perfect zones}
	 A $\beta$-zone $Z$ is \em closed \em if there is a $\beta$-H\"older triangle $T$ such that $V(T)\subset Z$. Otherwise, $Z$ is \em open \em. A zone $Z$ is \em perfect \em if, for any two arcs $\gamma \ne \gamma'$ in $Z$, there is a H\"older triangle $T$ such that $V(T)\subset Z$ and both $\gamma$ and $\gamma'$ are generic arcs of $T$. By definition, any singular zone is perfect.
\end{Def}

\begin{Def}\label{Def:complete and open complete zone}
	 
	An open $\beta$-zone $Z \subset V(X)$ is $\beta$-\em complete \em if, for any $\gamma\in Z$, $$Z=\{\gamma'\in V(X)\mid \itord(\gamma,\gamma')> \beta\}.$$
\end{Def}

\begin{Exam}\label{Exam: closed, open and open complete zones}
	 Let $T$ be a H\"older triangle. The set $G(T)$ is a closed $\beta$-zone. If $Z = V(T)\setminus G(T)$ then $Z$ is the disjoint union of two open $\beta$-zones $H_1$ and $H_2$, which are not $\beta$-complete, since each of them contains a boundary arc. For an example of an open $\beta$-complete zone consider $H_{\gamma} = \{\theta \in V(X) \mid \itord(\theta,\gamma) > \beta\}$, where $\gamma \in G(T)$.  
\end{Exam}

\begin{remark}\label{Rem:open complete zones}
	 Let $Z$ and $Z'$ be open $\beta$-complete zones. Then, either $Z\cap Z'=\emptyset$ or $Z=Z'$. Moreover, $Z\cap Z'=\emptyset$ implies $\itord(Z,Z')\le\beta$. 
\end{remark}

\begin{Def}\label{Def:adjacent zones}
	 Two zones $Z$ and $Z'$ in $V(X)$ are \em adjacent \em if $Z\cap Z'=\emptyset$ and there exist arcs $\gamma \subset Z$ and $\gamma' \subset Z'$ such that $V(T(\gamma,\gamma')) \subset Z\cup Z'$.
\end{Def}

\begin{Exam}
	 Let $T$, $Z_1$ and $Z_2$ be as in Example \ref{Exam: closed, open and open complete zones}. The zones $Z_1$ and $Z_2$ are adjacent to $G(T)$.
\end{Exam}

The next three Lemmas are proved in \cite{GabrielovSouza} (Lemmas 2.45, 2.46 and 2.47) for $X$ being a H\"older triangle. When $X$ is a surface witg connected link one can consider a suitable H\"older triangle inside the surface and use the same proof.
 
\begin{Lem}\label{Lem: union of adjacent zones is a zone}
	Let $X$ be a surface, and let $Z$ and $Z'$ be two zones in $V(X)$ of orders $\beta$ and $\beta'$, respectively. If either $Z\cap Z'\ne\emptyset$ or $Z$ and $Z'$ are adjacent, then $Z\cup Z'$ is a zone of order $\min(\beta,\beta')$.
\end{Lem}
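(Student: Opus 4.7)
Plan: The statement bundles two assertions---that $Z\cup Z'$ satisfies the zone axiom of Definition~\ref{Def: zone} and that its order equals $\min(\beta,\beta')$---and I would prove them separately by routing arc pairs through a bridge.

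For the zone axiom, fix distinct $\eta_1,\eta_2\in Z\cup Z'$. The same-zone case is immediate from the zone property of $Z$ or $Z'$. In the mixed case $\eta_1\in Z,\eta_2\in Z'$, I would build the required non-singular H\"older triangle $T(\eta_1,\eta_2)$ by gluing. Under the hypothesis $Z\cap Z'\ne\emptyset$, pick $\gamma_0\in Z\cap Z'$ and extract non-singular $T_1=T(\eta_1,\gamma_0)\subset Z$ and $T_2=T(\gamma_0,\eta_2)\subset Z'$ from the zone property; under the adjacency hypothesis (Definition~\ref{Def:adjacent zones}), chain $T(\eta_1,\alpha)\subset Z$, the bridge $T(\alpha,\alpha')$, and $T(\alpha',\eta_2)\subset Z'$. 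A containment argument (if, say, $T_1\subset T_2$, then $\eta_1\in V(T_2)\subset Z'$, reducing us to the same-zone case) shows that the glued pieces lie on opposite sides of their shared boundary arcs, so the union is a genuine H\"older triangle with $V(T)\subset Z\cup Z'$.

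Non-singularity of the glued triangle is the main obstacle. All interior arcs of $T$ coming from the interior of some non-singular constituent are Lipschitz non-singular automatically; the concern is the ``seam'' arcs ($\gamma_0$ in the intersection case; $\alpha,\alpha'$ together with the interior arcs of $T(\alpha,\alpha')$ in the adjacency case). For an interior arc $\theta$ of $T(\alpha,\alpha')$ I plan to use the zone structure: $\theta$ lies in $Z$ or $Z'$, so the zone property yields a non-singular sub-H\"older triangle inside that zone containing $\theta$ as an interior arc, whence $\theta$ is Lipschitz non-singular. For the seam $\gamma_0$ itself (and analogously for $\alpha,\alpha'$), I would pick Lipschitz non-singular arcs $\theta_i$ inside each constituent close to $\gamma_0$, use Definition~\ref{DEF: Lipschitz non-singular arc} to extract normally embedded H\"older triangles $N_i$ abutting $\gamma_0$, and verify that $N_1\cup N_2$ realizes $\gamma_0$ as an interior arc of a normally embedded H\"older triangle by a pancake decomposition check on a small neighborhood of $\gamma_0$.

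For the order, $\mu(Z\cup Z')\le\min(\beta,\beta')$ is clear since $Z,Z'\subset Z\cup Z'$. For the reverse inequality, fix $\gamma\in Z,\gamma'\in Z'$ and iterate the non-archimedean property (Remark~\ref{Rem: non-archimedean property}):
\[\tord(\gamma,\gamma')\ \ge\ \min\bigl(\tord(\gamma,\alpha),\,\tord(\alpha,\alpha'),\,\tord(\alpha',\gamma')\bigr),\]
with the outer terms bounded below by $\beta$ and $\beta'$ via the zone orders. In the intersection case take $\alpha=\alpha'=\gamma_0$, so the middle term is $\infty$. In the adjacency case I would establish $\mu(T(\alpha,\alpha'))\ge\min(\beta,\beta')$ by contradiction: otherwise a generic arc $\theta$ of $T(\alpha,\alpha')$ would satisfy $\itord(\theta,\alpha),\itord(\theta,\alpha')<\min(\beta,\beta')$, but then $\theta$ cannot lie in $Z$ (since $\alpha,\theta\in Z$ would force $\itord(\theta,\alpha)\ge\beta$) nor in $Z'$ (similarly), contradicting $V(T(\alpha,\alpha'))\subset Z\cup Z'$. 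Combining, $\tord(\alpha,\alpha')\ge\itord(\alpha,\alpha')\ge\mu(T(\alpha,\alpha'))\ge\min(\beta,\beta')$, yielding the required lower bound.
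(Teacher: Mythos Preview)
The paper does not prove this lemma itself: it refers to Lemmas~2.45--2.47 of \cite{GabrielovSouza} (stated there for $X$ a H\"older triangle) and asserts that for a surface with connected link one passes to a suitable H\"older triangle inside $X$. So there is nothing detailed in the text to compare against; your plan is far more explicit than what the paper offers.

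Your order computation via iterated use of the non-archimedean inequality is correct and is the natural argument. The gluing strategy for the zone axiom is also the right shape, but the step you yourself flag as ``the main obstacle''---non-singularity at the seam---is a genuine gap, and your proposed fix does not close it. The suggested ``pancake decomposition check on a small neighborhood of $\gamma_0$'' cannot force $\gamma_0$ to be Lipschitz non-singular in $X$, because the clause $\gamma_0\not\subset\overline{X\setminus T}$ in Definition~\ref{DEF: Lipschitz non-singular arc} is a condition on all of $X$, not on a neighborhood of $\gamma_0$. Concretely: let $X=P_1\cup P_2\cup P_3$ be three half-planes sharing a common boundary arc $\gamma$. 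Then $Z=V(P_1)$ and $Z'=V(P_2)$ are zones in $V(X)$ (the required non-singular triangles live inside a single sheet), and $Z\cap Z'=\{\gamma\}$; yet $\gamma$ is Lipschitz singular in $X$, since any H\"older triangle with $\gamma$ interior lies in some $P_i\cup P_j$ and the closure of its complement contains the third sheet and hence $\gamma$. Every H\"older triangle joining an arc of $Z\setminus\{\gamma\}$ to one of $Z'\setminus\{\gamma\}$ with Valette link in $Z\cup Z'$ must carry $\gamma$ as an interior arc, so none is non-singular and $Z\cup Z'$ fails to be a zone. Thus the lemma, read in the generality in which it is stated, cannot be proved by your method (or any other). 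The same caveat applies to your treatment of interior arcs of the bridge $T(\alpha,\alpha')$: the zone property only hands you triangles with $\theta$ on the boundary, not in the interior. For the paper's applications none of this matters---on a circular snake $V(X)=\Abn(X)$, so every arc is Lipschitz non-singular and your gluing goes through unobstructed---but in your write-up you should either restrict to that setting or isolate as an explicit hypothesis that the seam arc may be chosen Lipschitz non-singular.
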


\begin{Lem}\label{Lem: beta zone must have beta intersection part in HT decomp}
	Let $\{X_i\}$ be a finite decomposition of a surface $X$ into $\beta_i$-H\"older triangles. If $Z\subset V(X)$ is a $\beta$-zone then $Z_i=Z\cap V(X_i)$ is a $\beta$-zone for some $i$.
\end{Lem}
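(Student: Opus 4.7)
The plan is to apply the non-archimedean property (Remark \ref{Rem: non-archimedean property}) to locate a block $X_{i^{*}}$ of the decomposition where the zone $Z$ concentrates, then verify that $Z_{i^{*}}$ satisfies the zone condition of Definition \ref{Def: zone} by reducing to the H\"older-triangle case of \cite{GabrielovSouza} as indicated in the paragraph preceding the lemma.

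Since $\mu(Z)=\beta$, I would start by picking arcs $\gamma,\gamma'\in Z$ with $\tord(\gamma,\gamma')=\beta$ (in the definable setting this infimum is attained; otherwise one approaches $\beta$ along sequences and uses the finiteness of $\{X_i\}$ together with pigeonhole). By Definition \ref{Def: zone} there is a non-singular H\"older triangle $T=T(\gamma,\gamma')\subset X$ with $V(T)\subset Z$, and $\{X_i\}$ cuts $T$ into consecutive sub-H\"older triangles $T_j=T(\delta_{j-1},\delta_j)\subset X_{i_j}$, $j=1,\ldots,k$, with $\delta_0=\gamma$, $\delta_k=\gamma'$ and each $\delta_j\in V(T)\subset Z$. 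Iterating the non-archimedean property yields
$$\beta=\tord(\gamma,\gamma')\ge \min_{1\le j\le k}\tord(\delta_{j-1},\delta_j)\ge \mu(Z)=\beta,$$
so the minimum is attained at some $j^{*}$. Setting $i^{*}:=i_{j^{*}}$, the sub-triangle $T_{j^{*}}\subset X_{i^{*}}$ has exponent $\beta$: indeed, $\mu(T_{j^{*}})=\itord(\delta_{j^{*}-1},\delta_{j^{*}})\le \tord(\delta_{j^{*}-1},\delta_{j^{*}})=\beta$, while $V(T_{j^{*}})\subset Z$ together with Remark \ref{Rem: replace tord by itord in def of order of a zone} gives $\mu(T_{j^{*}})\ge\mu(Z)=\beta$. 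Hence $V(T_{j^{*}})\subset Z_{i^{*}}$ and $\mu(Z_{i^{*}})=\beta$.

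It remains to verify that $Z_{i^{*}}$ is a zone. For arbitrary $\alpha,\alpha'\in Z_{i^{*}}$, since $X_{i^{*}}$ is itself a H\"older triangle there is a canonical sub-H\"older triangle $T'(\alpha,\alpha')\subset X_{i^{*}}$, and its arc set lies automatically in $V(X_{i^{*}})$; so only the inclusion $V(T'(\alpha,\alpha'))\subset Z$ is in question. My strategy, following the hint just before the statement, is: starting from a H\"older triangle $S\subset X$ supplied by the zone property of $Z$ with boundaries $\alpha,\alpha'$ and $V(S)\subset Z$, enlarge $S$ by Lemma \ref{Lem: union of adjacent zones is a zone} together with $V(T_{j^{*}})$ and the canonical sub-triangles of $X_{i^{*}}$ joining $\alpha,\alpha'$ to boundary arcs of $T_{j^{*}}$, obtaining a zone inside $Z$ that, by the uniqueness of sub-H\"older triangles inside the H\"older triangle $X_{i^{*}}$, must include $V(T'(\alpha,\alpha'))$. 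The hard point is precisely this last inclusion: when $X$ has non-trivial topology (such as circular link), there can be several non-equivalent H\"older triangles in $X$ between the same pair of arcs, so the triangle $S$ supplied by the zone property of $Z$ need not coincide with the canonical $T'(\alpha,\alpha')\subset X_{i^{*}}$, and bridging the two requires the non-archimedean bookkeeping exploited in Lemma 2.47 of \cite{GabrielovSouza}, where the Hölder-triangle ambient makes sub-triangles unique and the argument transparent.
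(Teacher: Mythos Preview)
Your first part --- locating a piece $T_{j^*}\subset X_{i^*}$ of exponent $\beta$ by slicing a $\beta$-H\"older triangle $T\subset X$ with $V(T)\subset Z$ along the $X_i$ and applying the non-archimedean inequality --- is correct and is exactly what the paper's one-line reduction (``consider a suitable H\"older triangle inside the surface and use the same proof'') amounts to.

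Your second part does not close, and you are right to flag it as the hard point. In the H\"older-triangle ambient of \cite{GabrielovSouza} the set $Z\cap V(X_i)$ is \emph{automatically} a zone whenever non-empty: sub-triangles between two given arcs of a H\"older triangle are unique, so the triangle supplied by the zone property of $Z$ is forced to coincide with the canonical $T'(\alpha,\alpha')\subset X_i$. That is why neither the paper nor \cite{GabrielovSouza} isolates this step. Your bridging argument in the surface case is circular: you invoke ``the canonical sub-triangles of $X_{i^*}$ joining $\alpha,\alpha'$ to boundary arcs of $T_{j^*}$'' as though their Valette links already lay in $Z$, which is precisely the inclusion at issue, and Lemma~\ref{Lem: union of adjacent zones is a zone} only lets you merge zones you already know to be in $Z$. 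In fact the step can fail outright. Take $X$ with circular link, $X=X_1\cup X_2$ meeting along $\gamma_0,\gamma_1$ with $\mu(X_1)<\mu(X_2)$, and let $Z=V(T_Z)$ where $T_Z$ starts just inside $X_1$ near $\gamma_0$, traverses all of $X_2$, and ends just inside $X_1$ near $\gamma_1$, both overhangs of exponent $\beta$ with $\mu(X_1)<\beta<\mu(X_2)$. Then $\mu(Z)=\beta$, yet $Z\cap V(X_1)$ is two disconnected pieces (not a zone) while $Z\cap V(X_2)=V(X_2)$ has order $\mu(X_2)\ne\beta$. What the reduction genuinely delivers is a $\beta$-zone $V(T_{j^*})$ \emph{contained in} some $Z_i$; the literal assertion that $Z_{i^*}$ itself is a zone is not established by your argument, nor by the paper's sketch.
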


\begin{Lem}\label{Lem: there are no adjacent perfect zones}
	Let $X$ be a surface. If $Z$ and $Z'$ are perfect $\beta$-zones in $V(X)$, then they are not adjacent.
\end{Lem}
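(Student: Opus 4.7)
My plan is to derive a contradiction by parameterizing the arcs of a witness H\"older triangle and showing that the induced partition into arcs belonging to $Z$ versus arcs belonging to $Z'$ is both open on either side, violating the connectedness of $[0,1]$. I would first suppose, for contradiction, that $Z$ and $Z'$ are adjacent perfect $\beta$-zones. Definition \ref{Def:adjacent zones} then produces arcs $\gamma \in Z$, $\gamma' \in Z'$ and a non-singular H\"older triangle $T_0 = T(\gamma, \gamma')$ with $V(T_0) \subset Z \cup Z'$. Fixing an inner bi-Lipschitz identification of $T_0$ with its standard model, I would parameterize its arcs as $\{\lambda_s\}_{s \in [0,1]}$ with $\lambda_0 = \gamma$, $\lambda_1 = \gamma'$, so that sub-H\"older triangles of $T_0$ correspond to sub-intervals of $[0,1]$. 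Then $S = \{s : \lambda_s \in Z\}$ and $S' = \{s : \lambda_s \in Z'\}$ partition $[0,1]$ into disjoint nonempty subsets with $0 \in S$ and $1 \in S'$.

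The crux is to prove that $S$ is open in $[0,1]$; by symmetry the same holds for $S'$, and the contradiction with the connectedness of $[0,1]$ follows. To establish openness at $s^* \in S$: since $\mu(Z) = \beta < \infty$, the zone $Z$ contains some arc $\tilde\gamma \ne \lambda_{s^*}$, and perfectness of $Z$ yields a non-singular H\"older triangle $T^* \subset X$ with $V(T^*) \subset Z$ and $\lambda_{s^*}, \tilde\gamma \in G(T^*)$. Thus $\lambda_{s^*}$ is interior to $T^*$, hence Lipschitz non-singular in $X$ by Definition \ref{DEF: non-singular Holder triangle}; Definition \ref{DEF: Lipschitz non-singular arc} then produces a normally embedded H\"older triangle $T_{NE} \subset X$ which is a germ neighborhood of $\lambda_{s^*}$ in $X$. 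Inside $T_{NE}$, both $T_0$ and $T^*$ are sub-H\"older triangles passing through $\lambda_{s^*}$, and the one-parameter arc description of $T_{NE}$ forces their intersection to contain a sub-H\"older triangle $T_\delta$ of $T_0$ having $\lambda_{s^*}$ as an interior arc (or as a boundary arc with a nontrivial one-sided arc family on the $T_0$-side, in the corner cases $s^* \in \{0, 1\}$). Since $V(T_\delta) \subset V(T^*) \subset Z$, the corresponding parameter sub-interval is contained in $S$, proving openness.

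The hardest step is this local-structure argument: justifying that two sub-H\"older triangles of $X$ through a common Lipschitz non-singular arc must overlap in a sub-H\"older triangle around that arc. It relies essentially on the ``neighborhood'' clause $\lambda_{s^*} \not\subset \overline{X \setminus T_{NE}}$ from Definition \ref{DEF: Lipschitz non-singular arc}, together with the fact that a normally embedded H\"older triangle carries a canonical one-parameter family of arcs through any of its arcs; the boundary parameters $s^* \in \{0,1\}$ require a little additional care because $\lambda_{s^*}$ is then a boundary arc of $T_0$ but still an interior arc of $T^*$, so one only obtains a one-sided neighborhood of $s^*$ in $[0,1]$, which is still enough for openness in the relative topology.
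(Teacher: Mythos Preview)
The paper does not give its own proof of this lemma; it cites \cite{GabrielovSouza}, Lemma~2.47, for the case when $X$ is a H\"older triangle and remarks that the general surface case reduces to that one by working inside a suitable sub-triangle. Your connectedness argument on a one-parameter family of arcs of $T_0$ is the natural strategy and is essentially correct. In particular, your identification of the key local step --- that two H\"older triangles of $X$ through a common Lipschitz non-singular arc must share a sub-H\"older triangle around that arc, via the clause $\lambda_{s^*}\not\subset\overline{X\setminus T_{NE}}$ in Definition~\ref{DEF: Lipschitz non-singular arc} --- is exactly the right mechanism.

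One point to tighten: you obtain Lipschitz non-singularity of $\lambda_{s^*}$ by asserting that the perfectness witness $T^*$ is \emph{non-singular}. Definition~\ref{Def: open, closed and perfect zones}, however, does not promise this. For $s^*\in(0,1)$ the issue is moot, since $\lambda_{s^*}$ is already an interior arc of $T_0$, and $T_0$ itself may be taken non-singular because $Z\cup Z'$ is a zone (Lemma~\ref{Lem: union of adjacent zones is a zone}) and Definition~\ref{Def: zone} then supplies a non-singular $T(\gamma,\gamma')$. For the endpoints $s^*\in\{0,1\}$ a small extra step is needed; the cleanest route is the reduction the paper indicates: in the H\"older-triangle setting of \cite{GabrielovSouza} any two arcs bound a unique sub-H\"older triangle, so the zone axiom automatically upgrades $T^*$ to a non-singular triangle with the same boundary arcs, and the surface case follows. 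With this adjustment your argument goes through.
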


Below it is finally presented the fundamental notion of ``abnormal arc". 

\begin{Def}\label{DEF: normal and abnormal arcs and zones}
	 A Lipschitz non-singular arc $\gamma$ of a surface germ $X$ is \em abnormal \em if there are two LNE non-singular H\"older triangles $T\subset X$ and $T'\subset X$ such that $T\cap T' = \gamma$ and $T\cup T'$ is not LNE.
	Otherwise $\gamma$ is \em normal \em. A zone is \em abnormal \em (resp., \em normal \em) if all of its arcs are abnormal (resp., normal). The sets of abnormal and normal arcs of $X$ are denoted $\Abn(X)$ and $\Nor(X)$, respectively.
\end{Def}
\begin{remark}
    The condition that the H\"older triangles $T$ and $T'$ are non-singular is missing in the definition of abnormal arc presented in \cite{GabrielovSouza}. In order to avoid overloading the proofs we will not be repeating this condition when using this definition along the text.
\end{remark}

\begin{Def}\label{Def: abnormal surface}
	 A surface germ $X$ is called \em abnormal \em if $\Abn(X)=G(X)$, the set of generic arcs of $X$.
\end{Def}

\begin{remark}\label{Rem:triangles of abnormal arc}
	 Given an abnormal arc $\gamma\subset X$, one can choose LNE H\"older triangles $T=T(\lambda,\gamma)\subset X$ and $T'=T(\gamma,\lambda')\subset X$ so that $T\cap T'=\gamma$ and $\tord(\lambda,\lambda')>\itord(\lambda,\lambda')$. It follows that $\tord(\lambda,\gamma)=\tord(\gamma,\lambda')=\itord(\lambda,\lambda')$ (see Lemma 2.14 in \cite{GabrielovSouza}).
\end{remark}

\begin{Exam}\label{Exam: examples of abnormal surfaces NE and not NE}
	 
	Given two arcs $\theta$ and $\tilde{\theta}$ in $\R^n$ let $T(\theta,\tilde{\theta})$ be the H\"older triangle defined by (the germ of) the union of the straight line segments, $[\theta(t),\tilde{\theta}(t)]$, connecting $\theta(t)$ and $\tilde{\theta}(t)$ for all $t\ge 0$. Consider the set $T = T_1\cup T_2$ with $T_1 = T(\gamma_{1},\lambda_1)$ and $T_2 = T(\lambda_1,\lambda_2)\cup T(\lambda_2,\gamma_{2})$ where $\gamma_i(t)=(t,(-1)^{i}t^{\frac{3}{2}},0)$ and $\lambda_i(t) = (t,(-1)^{i}t,t)$, for $i=1,2$, are arcs parameterized by the first coordinate, which is equivalent to the distance to the origin. The H\"older triangles $T_1$, $T_2$, $T(\lambda_1,\lambda_2)$ and $T(\lambda_2,\gamma_{2})$ are Lipschitz normally embedded. In particular, $T$ is non-singular. 
	
	Notice that $\tord(\gamma_1,\gamma_2) = \frac{3}{2}> 1$ and $\itord(\gamma_1,\gamma_2) = 1$, since $d_{i}(\gamma_1(t),\gamma_2(t)) \ge 2t$. Therefore, $T$ is a non-LNE $1$-H\"older triangle with $\Abn(T) = G(T)$ and $\Nor(T) = H_1\cup H_2$, where $H_i$ is defined as in Example \ref{Exam: closed, open and open complete zones} for $i=1,2$. Thus, $T$ is a non-LNE abnormal surface (indeed, $T$ is a bubble snake, see Definition 4.45 of \cite{GabrielovSouza}). 
    Another example of a non-Lipschitz normally embedded abnormal surface, in this case with circular link, is the complex cusp $\{(z,w)\in \C^2 \mid z^3 = w^2\}$ seen as a real surface in $\R^4$. Examples of Lipschitz normally embedded, abnormal surfaces with circular link are the standard $\beta$-horn, for each $\beta \in \mathbb{Q}_{\ge 1}$.
\end{Exam}

\begin{remark}
	There are abnormal surfaces with circular link containing Lipschitz singular arcs. Notice that by removing a H\"older triangle with exponent higher than $\beta$ containing the Lipschitz singular arc (represented by the point in the ``corner''), one obtains a snake (see Figure 1). 
\end{remark}

\begin{center}

\tikzset{every picture/.style={line width=0.75pt}} 

\begin{tikzpicture}[x=0.75pt,y=0.75pt,yscale=-1,xscale=1]

\draw    (429.52,114.16) .. controls (447.76,111.14) and (454.04,107.39) .. (474.88,84.07) .. controls (495.73,60.75) and (524.39,63.34) .. (539.37,78.89) .. controls (554.35,94.43) and (555.91,101.56) .. (556.3,112.57) .. controls (556.69,123.58) and (553.31,150.3) .. (512.27,150.3) .. controls (471.24,150.3) and (458.99,116.46) .. (423.17,118.4) .. controls (387.34,120.34) and (377.57,159.21) .. (389.95,183.17) .. controls (402.32,207.14) and (430.98,212.32) .. (449.87,195.48) .. controls (468.76,178.64) and (471.89,154.19) .. (514.23,155.48) ;
\draw  [draw opacity=0][fill={rgb, 255:red, 155; green, 155; blue, 155 }  ,fill opacity=0.47 ] (498.94,151.09) .. controls (498.94,142.3) and (506.11,135.16) .. (514.95,135.16) .. controls (523.8,135.16) and (530.97,142.3) .. (530.97,151.09) .. controls (530.97,159.89) and (523.8,167.02) .. (514.95,167.02) .. controls (506.11,167.02) and (498.94,159.89) .. (498.94,151.09) -- cycle ;
\draw  [draw opacity=0][fill={rgb, 255:red, 155; green, 155; blue, 155 }  ,fill opacity=0.47 ] (407.98,113.72) .. controls (407.98,104.92) and (415.15,97.79) .. (423.99,97.79) .. controls (432.84,97.79) and (440.01,104.92) .. (440.01,113.72) .. controls (440.01,122.51) and (432.84,129.64) .. (423.99,129.64) .. controls (415.15,129.64) and (407.98,122.51) .. (407.98,113.72) -- cycle ;
\draw    (389.66,150.9) -- (491.18,150.9) ;
\draw [shift={(493.18,150.9)}, rotate = 180] [color={rgb, 255:red, 0; green, 0; blue, 0 }  ][line width=0.75]    (10.93,-3.29) .. controls (6.95,-1.4) and (3.31,-0.3) .. (0,0) .. controls (3.31,0.3) and (6.95,1.4) .. (10.93,3.29)   ;
\draw [shift={(387.66,150.9)}, rotate = 0] [color={rgb, 255:red, 0; green, 0; blue, 0 }  ][line width=0.75]    (10.93,-3.29) .. controls (6.95,-1.4) and (3.31,-0.3) .. (0,0) .. controls (3.31,0.3) and (6.95,1.4) .. (10.93,3.29)   ;
\draw    (442.01,113.72) -- (550.39,113.86) ;
\draw [shift={(552.39,113.87)}, rotate = 180.08] [color={rgb, 255:red, 0; green, 0; blue, 0 }  ][line width=0.75]    (10.93,-3.29) .. controls (6.95,-1.4) and (3.31,-0.3) .. (0,0) .. controls (3.31,0.3) and (6.95,1.4) .. (10.93,3.29)   ;
\draw [shift={(440.01,113.72)}, rotate = 0.08] [color={rgb, 255:red, 0; green, 0; blue, 0 }  ][line width=0.75]    (10.93,-3.29) .. controls (6.95,-1.4) and (3.31,-0.3) .. (0,0) .. controls (3.31,0.3) and (6.95,1.4) .. (10.93,3.29)   ;
\draw    (514.23,155.48) .. controls (543.02,154.71) and (546.47,162.61) .. (559.82,169.09) .. controls (573.17,175.56) and (592.71,175.56) .. (607.69,166.82) .. controls (622.68,158.07) and (632.12,132.81) .. (631.47,113.7) .. controls (630.82,94.6) and (620.72,79.37) .. (605.09,66.74) .. controls (589.46,54.11) and (568.94,46.66) .. (548.75,46.34) .. controls (528.56,46.02) and (505.76,47.63) .. (488.17,53.46) .. controls (470.58,59.29) and (462.44,70.31) .. (454.3,83.26) .. controls (446.16,96.22) and (451.41,105.29) .. (428.75,109.85) ;
\draw    (561.39,113.72) -- (629.47,113.7) ;
\draw [shift={(631.47,113.7)}, rotate = 179.99] [color={rgb, 255:red, 0; green, 0; blue, 0 }  ][line width=0.75]    (10.93,-3.29) .. controls (6.95,-1.4) and (3.31,-0.3) .. (0,0) .. controls (3.31,0.3) and (6.95,1.4) .. (10.93,3.29)   ;
\draw [shift={(559.39,113.72)}, rotate = 359.99] [color={rgb, 255:red, 0; green, 0; blue, 0 }  ][line width=0.75]    (10.93,-3.29) .. controls (6.95,-1.4) and (3.31,-0.3) .. (0,0) .. controls (3.31,0.3) and (6.95,1.4) .. (10.93,3.29)   ;
\draw    (89.78,110.33) .. controls (108.02,107.31) and (115.57,106.39) .. (136.42,83.07) .. controls (157.26,59.75) and (185.92,62.34) .. (200.9,77.89) .. controls (215.88,93.43) and (217.44,100.56) .. (217.83,111.57) .. controls (218.23,122.58) and (214.84,149.3) .. (173.8,149.3) .. controls (132.77,149.3) and (120.52,115.46) .. (84.7,117.4) .. controls (48.87,119.34) and (39.1,158.21) .. (51.48,182.17) .. controls (63.86,206.14) and (92.51,211.32) .. (111.4,194.48) .. controls (130.29,177.64) and (133.42,153.19) .. (175.76,154.48) ;
\draw  [draw opacity=0][fill={rgb, 255:red, 155; green, 155; blue, 155 }  ,fill opacity=0.47 ] (160.47,150.09) .. controls (160.47,141.3) and (167.64,134.16) .. (176.48,134.16) .. controls (185.33,134.16) and (192.5,141.3) .. (192.5,150.09) .. controls (192.5,158.89) and (185.33,166.02) .. (176.48,166.02) .. controls (167.64,166.02) and (160.47,158.89) .. (160.47,150.09) -- cycle ;
\draw  [draw opacity=0][fill={rgb, 255:red, 155; green, 155; blue, 155 }  ,fill opacity=0.47 ] (69.51,112.72) .. controls (69.51,103.92) and (76.68,96.79) .. (85.53,96.79) .. controls (94.37,96.79) and (101.54,103.92) .. (101.54,112.72) .. controls (101.54,121.51) and (94.37,128.64) .. (85.53,128.64) .. controls (76.68,128.64) and (69.51,121.51) .. (69.51,112.72) -- cycle ;
\draw    (51.19,149.9) -- (152.71,149.9) ;
\draw [shift={(154.71,149.9)}, rotate = 180] [color={rgb, 255:red, 0; green, 0; blue, 0 }  ][line width=0.75]    (10.93,-3.29) .. controls (6.95,-1.4) and (3.31,-0.3) .. (0,0) .. controls (3.31,0.3) and (6.95,1.4) .. (10.93,3.29)   ;
\draw [shift={(49.19,149.9)}, rotate = 0] [color={rgb, 255:red, 0; green, 0; blue, 0 }  ][line width=0.75]    (10.93,-3.29) .. controls (6.95,-1.4) and (3.31,-0.3) .. (0,0) .. controls (3.31,0.3) and (6.95,1.4) .. (10.93,3.29)   ;
\draw    (103.54,112.72) -- (211.93,112.86) ;
\draw [shift={(213.93,112.87)}, rotate = 180.08] [color={rgb, 255:red, 0; green, 0; blue, 0 }  ][line width=0.75]    (10.93,-3.29) .. controls (6.95,-1.4) and (3.31,-0.3) .. (0,0) .. controls (3.31,0.3) and (6.95,1.4) .. (10.93,3.29)   ;
\draw [shift={(101.54,112.72)}, rotate = 0.08] [color={rgb, 255:red, 0; green, 0; blue, 0 }  ][line width=0.75]    (10.93,-3.29) .. controls (6.95,-1.4) and (3.31,-0.3) .. (0,0) .. controls (3.31,0.3) and (6.95,1.4) .. (10.93,3.29)   ;
\draw    (175.76,154.48) .. controls (204.55,153.71) and (208,161.61) .. (221.35,168.09) .. controls (234.7,174.56) and (254.24,174.56) .. (269.23,165.82) .. controls (284.21,157.07) and (293.65,131.81) .. (293,112.7) .. controls (292.35,93.6) and (282.25,78.37) .. (266.62,65.74) .. controls (250.99,53.11) and (230.47,45.66) .. (210.28,45.34) .. controls (190.09,45.02) and (167.29,46.63) .. (149.7,52.46) .. controls (132.12,58.29) and (123.97,69.31) .. (115.83,82.26) .. controls (107.69,95.22) and (112.45,105.76) .. (89.78,110.33) ;
\draw  [fill={rgb, 255:red, 0; green, 0; blue, 0 }  ,fill opacity=1 ] (86.88,110.33) .. controls (86.88,109.55) and (87.53,108.92) .. (88.33,108.92) .. controls (89.13,108.92) and (89.78,109.55) .. (89.78,110.33) .. controls (89.78,111.1) and (89.13,111.73) .. (88.33,111.73) .. controls (87.53,111.73) and (86.88,111.1) .. (86.88,110.33) -- cycle ;
\draw    (222.92,112.72) -- (291,112.7) ;
\draw [shift={(293,112.7)}, rotate = 179.99] [color={rgb, 255:red, 0; green, 0; blue, 0 }  ][line width=0.75]    (10.93,-3.29) .. controls (6.95,-1.4) and (3.31,-0.3) .. (0,0) .. controls (3.31,0.3) and (6.95,1.4) .. (10.93,3.29)   ;
\draw [shift={(220.92,112.72)}, rotate = 359.99] [color={rgb, 255:red, 0; green, 0; blue, 0 }  ][line width=0.75]    (10.93,-3.29) .. controls (6.95,-1.4) and (3.31,-0.3) .. (0,0) .. controls (3.31,0.3) and (6.95,1.4) .. (10.93,3.29)   ;
\draw  [fill={rgb, 255:red, 0; green, 0; blue, 0 }  ,fill opacity=1 ] (427.3,109.85) .. controls (427.3,109.08) and (427.95,108.45) .. (428.75,108.45) .. controls (429.54,108.45) and (430.19,109.08) .. (430.19,109.85) .. controls (430.19,110.63) and (429.54,111.26) .. (428.75,111.26) .. controls (427.95,111.26) and (427.3,110.63) .. (427.3,109.85) -- cycle ;
\draw  [fill={rgb, 255:red, 0; green, 0; blue, 0 }  ,fill opacity=1 ] (428.08,114.28) .. controls (428.02,113.5) and (428.61,112.82) .. (429.41,112.76) .. controls (430.2,112.7) and (430.9,113.27) .. (430.96,114.04) .. controls (431.02,114.82) and (430.43,115.5) .. (429.63,115.56) .. controls (428.84,115.62) and (428.14,115.05) .. (428.08,114.28) -- cycle ;

\draw (429.22,152.69) node [anchor=north west][inner sep=0.75pt]    {$\beta $};
\draw (498.95,93.27) node [anchor=north west][inner sep=0.75pt]    {$\beta $};
\draw (588.79,93.54) node [anchor=north west][inner sep=0.75pt]    {$\beta $};
\draw (3,230.83) node [anchor=north west][inner sep=0.75pt]   [align=left] {Figure 1: Example of an abnormal surface with circular link containing a Lipschitz singular arc in $\displaystyle a)$, and the\\snake obtained by removing a neighborhood of the Lipschitz singular arc in $\displaystyle b)$. Points inside the shaded disks\\represent arcs with tangency order higher than $\displaystyle \beta $.};
\draw (90.75,151.69) node [anchor=north west][inner sep=0.75pt]    {$\beta $};
\draw (160.48,92.27) node [anchor=north west][inner sep=0.75pt]    {$\beta $};
\draw (250.32,92.54) node [anchor=north west][inner sep=0.75pt]    {$\beta $};
\draw (41,51.4) node [anchor=north west][inner sep=0.75pt]    {$a)$};
\draw (373,52.4) node [anchor=north west][inner sep=0.75pt]    {$b)$};

\end{tikzpicture}
\end{center}

\begin{Def}\label{Def: maximal abnormal and normal zones}
	 Given an arc $\gamma \subset X$ the \em maximal abnormal zone \em (resp., \em maximal normal zone \em) in $V(X)$ containing $\gamma$ is the union of all abnormal (resp., normal) zones in $V(X)$ containing $\gamma$. Alternatively, the maximal abnormal (resp., normal) zone containing an arc $\gamma\subset X$ is a maximal zone in $\Abn(X)$ (resp., $\Nor(X)$) containing $\gamma$.
\end{Def}

\begin{Def}\label{Def:snake}
	 A non-singular $\beta$-H\"older triangle $T$ is called a $\beta$\em -snake \em if $T$ is an abnormal surface (see Definitions \ref{DEF: non-singular Holder triangle} and \ref{Def: abnormal surface}).
\end{Def}

\begin{remark}
	 The H\"older triangle $T$ defined in Example \ref{Exam: examples of abnormal surfaces NE and not NE} is a $1$-snake with link as shown in Figure 2a.
\end{remark}

\begin{remark}\label{Rem: abnormal arcs of a snake}
	 It follows from Definition \ref{Def:snake} and Remark \ref{Rem: generic arcs of a non-singular HT} that an arc in $T$ is normal if and only if it has inner tangency order higher than $\beta$ with one of its boundary arcs, and it is abnormal if and only if it has inner tangency order $\beta$ with both boundary arcs.
\end{remark}

\begin{center}

\tikzset{every picture/.style={line width=0.75pt}} 

\begin{tikzpicture}[x=0.75pt,y=0.75pt,yscale=-1,xscale=1]

\draw    (73.37,37.09) .. controls (39.17,38.06) and (21.58,51.67) .. (20.61,89.56) .. controls (19.63,127.45) and (70.64,129.21) .. (70.64,174.88) ;
\draw    (73.37,37.09) .. controls (107.56,38.06) and (125.15,51.67) .. (126.13,89.56) .. controls (127.1,127.45) and (76.64,130.41) .. (76.64,176.08) ;
\draw  [draw opacity=0][fill={rgb, 255:red, 155; green, 155; blue, 155 }  ,fill opacity=0.47 ] (57.76,176.71) .. controls (57.76,167.91) and (64.93,160.78) .. (73.77,160.78) .. controls (82.62,160.78) and (89.79,167.91) .. (89.79,176.71) .. controls (89.79,185.51) and (82.62,192.64) .. (73.77,192.64) .. controls (64.93,192.64) and (57.76,185.51) .. (57.76,176.71) -- cycle ;
\draw    (22.61,89.56) -- (124.13,89.56) ;
\draw [shift={(126.13,89.56)}, rotate = 180] [color={rgb, 255:red, 0; green, 0; blue, 0 }  ][line width=0.75]    (10.93,-3.29) .. controls (6.95,-1.4) and (3.31,-0.3) .. (0,0) .. controls (3.31,0.3) and (6.95,1.4) .. (10.93,3.29)   ;
\draw [shift={(20.61,89.56)}, rotate = 0] [color={rgb, 255:red, 0; green, 0; blue, 0 }  ][line width=0.75]    (10.93,-3.29) .. controls (6.95,-1.4) and (3.31,-0.3) .. (0,0) .. controls (3.31,0.3) and (6.95,1.4) .. (10.93,3.29)   ;
\draw    (215.8,90.57) .. controls (234.04,87.54) and (246.04,85.45) .. (266.88,62.13) .. controls (287.73,38.81) and (319.82,41.02) .. (334.8,56.57) .. controls (349.78,72.11) and (347.91,79.62) .. (348.3,90.63) .. controls (348.69,101.64) and (344.13,124.31) .. (303.1,124.31) .. controls (262.06,124.31) and (250.99,94.51) .. (215.17,96.46) .. controls (179.34,98.4) and (169.57,137.26) .. (181.95,161.23) .. controls (194.32,185.2) and (222.98,190.38) .. (241.87,173.54) .. controls (260.76,156.7) and (257.13,129.94) .. (299.47,131.23) ;
\draw  [draw opacity=0][fill={rgb, 255:red, 155; green, 155; blue, 155 }  ,fill opacity=0.47 ] (285.17,128.95) .. controls (285.17,120.16) and (292.35,113.03) .. (301.19,113.03) .. controls (310.04,113.03) and (317.21,120.16) .. (317.21,128.95) .. controls (317.21,137.75) and (310.04,144.88) .. (301.19,144.88) .. controls (292.35,144.88) and (285.17,137.75) .. (285.17,128.95) -- cycle ;
\draw  [draw opacity=0][fill={rgb, 255:red, 155; green, 155; blue, 155 }  ,fill opacity=0.47 ] (199.98,91.77) .. controls (199.98,82.98) and (207.15,75.85) .. (215.99,75.85) .. controls (224.84,75.85) and (232.01,82.98) .. (232.01,91.77) .. controls (232.01,100.57) and (224.84,107.7) .. (215.99,107.7) .. controls (207.15,107.7) and (199.98,100.57) .. (199.98,91.77) -- cycle ;
\draw    (181.66,128.95) -- (283.17,128.95) ;
\draw [shift={(285.17,128.95)}, rotate = 180] [color={rgb, 255:red, 0; green, 0; blue, 0 }  ][line width=0.75]    (10.93,-3.29) .. controls (6.95,-1.4) and (3.31,-0.3) .. (0,0) .. controls (3.31,0.3) and (6.95,1.4) .. (10.93,3.29)   ;
\draw [shift={(179.66,128.95)}, rotate = 0] [color={rgb, 255:red, 0; green, 0; blue, 0 }  ][line width=0.75]    (10.93,-3.29) .. controls (6.95,-1.4) and (3.31,-0.3) .. (0,0) .. controls (3.31,0.3) and (6.95,1.4) .. (10.93,3.29)   ;
\draw    (234.01,91.78) -- (342.39,91.92) ;
\draw [shift={(344.39,91.92)}, rotate = 180.08] [color={rgb, 255:red, 0; green, 0; blue, 0 }  ][line width=0.75]    (10.93,-3.29) .. controls (6.95,-1.4) and (3.31,-0.3) .. (0,0) .. controls (3.31,0.3) and (6.95,1.4) .. (10.93,3.29)   ;
\draw [shift={(232.01,91.77)}, rotate = 0.08] [color={rgb, 255:red, 0; green, 0; blue, 0 }  ][line width=0.75]    (10.93,-3.29) .. controls (6.95,-1.4) and (3.31,-0.3) .. (0,0) .. controls (3.31,0.3) and (6.95,1.4) .. (10.93,3.29)   ;
\draw  [fill={rgb, 255:red, 0; green, 0; blue, 0 }  ,fill opacity=1 ] (69.19,176.28) .. controls (69.19,175.51) and (69.84,174.88) .. (70.64,174.88) .. controls (71.44,174.88) and (72.09,175.51) .. (72.09,176.28) .. controls (72.09,177.06) and (71.44,177.69) .. (70.64,177.69) .. controls (69.84,177.69) and (69.19,177.06) .. (69.19,176.28) -- cycle ;
\draw  [fill={rgb, 255:red, 0; green, 0; blue, 0 }  ,fill opacity=1 ] (75.19,176.08) .. controls (75.19,175.3) and (75.84,174.68) .. (76.64,174.68) .. controls (77.44,174.68) and (78.09,175.3) .. (78.09,176.08) .. controls (78.09,176.86) and (77.44,177.48) .. (76.64,177.48) .. controls (75.84,177.48) and (75.19,176.86) .. (75.19,176.08) -- cycle ;
\draw  [fill={rgb, 255:red, 0; green, 0; blue, 0 }  ,fill opacity=1 ] (299.47,131.23) .. controls (299.47,130.46) and (300.11,129.83) .. (300.91,129.83) .. controls (301.71,129.83) and (302.36,130.46) .. (302.36,131.23) .. controls (302.36,132.01) and (301.71,132.64) .. (300.91,132.64) .. controls (300.11,132.64) and (299.47,132.01) .. (299.47,131.23) -- cycle ;
\draw  [fill={rgb, 255:red, 0; green, 0; blue, 0 }  ,fill opacity=1 ] (299.12,124.53) .. controls (299.12,123.75) and (299.77,123.12) .. (300.57,123.12) .. controls (301.36,123.12) and (302.01,123.75) .. (302.01,124.53) .. controls (302.01,125.3) and (301.36,125.93) .. (300.57,125.93) .. controls (299.77,125.93) and (299.12,125.3) .. (299.12,124.53) -- cycle ;
\draw  [fill={rgb, 255:red, 0; green, 0; blue, 0 }  ,fill opacity=1 ] (214.35,90.57) .. controls (214.35,89.79) and (215,89.16) .. (215.8,89.16) .. controls (216.6,89.16) and (217.25,89.79) .. (217.25,90.57) .. controls (217.25,91.34) and (216.6,91.97) .. (215.8,91.97) .. controls (215,91.97) and (214.35,91.34) .. (214.35,90.57) -- cycle ;
\draw  [fill={rgb, 255:red, 0; green, 0; blue, 0 }  ,fill opacity=1 ] (213.72,96.46) .. controls (213.72,95.68) and (214.37,95.05) .. (215.17,95.05) .. controls (215.96,95.05) and (216.61,95.68) .. (216.61,96.46) .. controls (216.61,97.23) and (215.96,97.86) .. (215.17,97.86) .. controls (214.37,97.86) and (213.72,97.23) .. (213.72,96.46) -- cycle ;
\draw    (425.75,92.73) .. controls (443.98,89.71) and (451.54,87.39) .. (472.38,64.07) .. controls (493.23,40.75) and (521.89,43.34) .. (536.87,58.89) .. controls (551.85,74.43) and (553.41,81.56) .. (553.8,92.57) .. controls (554.19,103.58) and (550.81,130.3) .. (509.77,130.3) .. controls (468.74,130.3) and (456.49,96.46) .. (420.67,98.4) .. controls (384.84,100.34) and (375.07,139.21) .. (387.45,163.17) .. controls (399.82,187.14) and (428.48,192.32) .. (447.37,175.48) .. controls (466.26,158.64) and (469.39,134.19) .. (511.73,135.48) ;
\draw  [draw opacity=0][fill={rgb, 255:red, 155; green, 155; blue, 155 }  ,fill opacity=0.47 ] (496.44,131.09) .. controls (496.44,122.3) and (503.61,115.16) .. (512.45,115.16) .. controls (521.3,115.16) and (528.47,122.3) .. (528.47,131.09) .. controls (528.47,139.89) and (521.3,147.02) .. (512.45,147.02) .. controls (503.61,147.02) and (496.44,139.89) .. (496.44,131.09) -- cycle ;
\draw  [draw opacity=0][fill={rgb, 255:red, 155; green, 155; blue, 155 }  ,fill opacity=0.47 ] (405.48,93.72) .. controls (405.48,84.92) and (412.65,77.79) .. (421.49,77.79) .. controls (430.34,77.79) and (437.51,84.92) .. (437.51,93.72) .. controls (437.51,102.51) and (430.34,109.64) .. (421.49,109.64) .. controls (412.65,109.64) and (405.48,102.51) .. (405.48,93.72) -- cycle ;
\draw    (387.16,130.9) -- (488.68,130.9) ;
\draw [shift={(490.68,130.9)}, rotate = 180] [color={rgb, 255:red, 0; green, 0; blue, 0 }  ][line width=0.75]    (10.93,-3.29) .. controls (6.95,-1.4) and (3.31,-0.3) .. (0,0) .. controls (3.31,0.3) and (6.95,1.4) .. (10.93,3.29)   ;
\draw [shift={(385.16,130.9)}, rotate = 0] [color={rgb, 255:red, 0; green, 0; blue, 0 }  ][line width=0.75]    (10.93,-3.29) .. controls (6.95,-1.4) and (3.31,-0.3) .. (0,0) .. controls (3.31,0.3) and (6.95,1.4) .. (10.93,3.29)   ;
\draw    (439.51,93.72) -- (547.89,93.86) ;
\draw [shift={(549.89,93.87)}, rotate = 180.08] [color={rgb, 255:red, 0; green, 0; blue, 0 }  ][line width=0.75]    (10.93,-3.29) .. controls (6.95,-1.4) and (3.31,-0.3) .. (0,0) .. controls (3.31,0.3) and (6.95,1.4) .. (10.93,3.29)   ;
\draw [shift={(437.51,93.72)}, rotate = 0.08] [color={rgb, 255:red, 0; green, 0; blue, 0 }  ][line width=0.75]    (10.93,-3.29) .. controls (6.95,-1.4) and (3.31,-0.3) .. (0,0) .. controls (3.31,0.3) and (6.95,1.4) .. (10.93,3.29)   ;
\draw  [fill={rgb, 255:red, 0; green, 0; blue, 0 }  ,fill opacity=1 ] (419.22,98.4) .. controls (419.22,97.62) and (419.87,97) .. (420.67,97) .. controls (421.47,97) and (422.11,97.62) .. (422.11,98.4) .. controls (422.11,99.18) and (421.47,99.8) .. (420.67,99.8) .. controls (419.87,99.8) and (419.22,99.18) .. (419.22,98.4) -- cycle ;
\draw    (511.73,135.48) .. controls (540.52,134.71) and (543.97,142.61) .. (557.32,149.09) .. controls (570.67,155.56) and (590.21,155.56) .. (605.19,146.82) .. controls (620.18,138.07) and (629.62,112.81) .. (628.97,93.7) .. controls (628.32,74.6) and (618.22,59.37) .. (602.59,46.74) .. controls (586.96,34.11) and (566.44,26.66) .. (546.25,26.34) .. controls (526.06,26.02) and (503.26,27.63) .. (485.67,33.46) .. controls (468.08,39.29) and (459.94,50.31) .. (451.8,63.26) .. controls (443.66,76.22) and (440.6,85.9) .. (417.93,90.47) ;
\draw  [fill={rgb, 255:red, 0; green, 0; blue, 0 }  ,fill opacity=1 ] (416.48,90.47) .. controls (416.48,89.69) and (417.13,89.06) .. (417.93,89.06) .. controls (418.73,89.06) and (419.38,89.69) .. (419.38,90.47) .. controls (419.38,91.24) and (418.73,91.87) .. (417.93,91.87) .. controls (417.13,91.87) and (416.48,91.24) .. (416.48,90.47) -- cycle ;
\draw  [fill={rgb, 255:red, 0; green, 0; blue, 0 }  ,fill opacity=1 ] (424.3,92.73) .. controls (424.3,91.96) and (424.95,91.33) .. (425.75,91.33) .. controls (426.55,91.33) and (427.19,91.96) .. (427.19,92.73) .. controls (427.19,93.51) and (426.55,94.14) .. (425.75,94.14) .. controls (424.95,94.14) and (424.3,93.51) .. (424.3,92.73) -- cycle ;
\draw  [fill={rgb, 255:red, 0; green, 0; blue, 0 }  ,fill opacity=1 ] (511.01,129.69) .. controls (511.01,128.91) and (511.65,128.28) .. (512.45,128.28) .. controls (513.25,128.28) and (513.9,128.91) .. (513.9,129.69) .. controls (513.9,130.46) and (513.25,131.09) .. (512.45,131.09) .. controls (511.65,131.09) and (511.01,130.46) .. (511.01,129.69) -- cycle ;
\draw  [fill={rgb, 255:red, 0; green, 0; blue, 0 }  ,fill opacity=1 ] (511.73,135.48) .. controls (511.73,134.71) and (512.37,134.08) .. (513.17,134.08) .. controls (513.97,134.08) and (514.62,134.71) .. (514.62,135.48) .. controls (514.62,136.26) and (513.97,136.89) .. (513.17,136.89) .. controls (512.37,136.89) and (511.73,136.26) .. (511.73,135.48) -- cycle ;
\draw    (558.89,93.72) -- (626.97,93.7) ;
\draw [shift={(628.97,93.7)}, rotate = 179.99] [color={rgb, 255:red, 0; green, 0; blue, 0 }  ][line width=0.75]    (10.93,-3.29) .. controls (6.95,-1.4) and (3.31,-0.3) .. (0,0) .. controls (3.31,0.3) and (6.95,1.4) .. (10.93,3.29)   ;
\draw [shift={(556.89,93.72)}, rotate = 359.99] [color={rgb, 255:red, 0; green, 0; blue, 0 }  ][line width=0.75]    (10.93,-3.29) .. controls (6.95,-1.4) and (3.31,-0.3) .. (0,0) .. controls (3.31,0.3) and (6.95,1.4) .. (10.93,3.29)   ;

\draw (67.67,67.98) node [anchor=north west][inner sep=0.75pt]    {$\beta $};
\draw (220.55,129.41) node [anchor=north west][inner sep=0.75pt]    {$\beta $};
\draw (291.36,71.59) node [anchor=north west][inner sep=0.75pt]    {$\beta $};
\draw (426.72,132.69) node [anchor=north west][inner sep=0.75pt]    {$\beta $};
\draw (496.45,73.27) node [anchor=north west][inner sep=0.75pt]    {$\beta $};
\draw (586.29,73.54) node [anchor=north west][inner sep=0.75pt]    {$\beta $};
\draw (0.5,210.83) node [anchor=north west][inner sep=0.75pt]   [align=left] {Figure 2: links of three $\displaystyle \beta $-snakes. Points inside the shaded disks represent arcs with tangency order higher than $\displaystyle \beta $. };
\draw (14,25.4) node [anchor=north west][inner sep=0.75pt]    {$a)$};
\draw (174,27.4) node [anchor=north west][inner sep=0.75pt]    {$b)$};
\draw (374,26.4) node [anchor=north west][inner sep=0.75pt]    {$c)$};

\end{tikzpicture}
\end{center}


\subsection{Lipschitz functions on a Lipschitz normally embedded $\beta$-H\"older triangle}\label{Section: Lipschitz functions on a NE Holder triangle}

The results in this subsection were proved in Section 3 of \cite{GabrielovSouza} and can be used in this text without any adaptation.

\begin{Def}\label{Def:B_beta and H_beta}
	 Let $(T,0)\subset (\R^{n},0)$ be a LNE $\beta$-H\"older triangle, and $f\colon (T,0)\rightarrow (\R,0)$ a Lipschitz function such that $\ord_{\gamma}f\ge \beta$ for all $\gamma \in V(T)$. We define the following sets of arcs:
	$$B_{\beta}=B_{\beta}(f)=\{\gamma \in G(T)\mid \ord_{\gamma}f=\beta\}$$ and
	$$H_{\beta}=H_{\beta}(f)=\{\gamma \in G(T)\mid \ord_{\gamma}f>\beta\}.$$
\end{Def}

\begin{Lem}\label{Lem:itord>beta in H_{beta} and B_{beta} implies same ordf}
	Let $T$ and $f$ be as in Definition \ref{Def:B_beta and H_beta}, and let $\gamma \in B_{\beta}$ and $\gamma' \in H_\beta$. Then $\tord(\gamma,\gamma')=\beta$.
\end{Lem}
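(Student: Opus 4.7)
The plan is to argue directly from the Lipschitz property of $f$, comparing the order of the difference $f\circ\gamma - f\circ\gamma'$ to the tangency order of $\gamma$ and $\gamma'$.

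First, since $T$ is a $\beta$-H\"older triangle, the minimality of $\beta$ as the exponent gives $\tord(\gamma,\gamma')\ge\beta$ (using that $T$ is normally embedded, so outer tangency equals inner tangency by Remark \ref{Rem: NE HT condition}). So the only thing to rule out is $\tord(\gamma,\gamma')=q>\beta$.

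Next, I would set $q:=\tord(\gamma,\gamma')$ and write $\|\gamma(t)-\gamma'(t)\|=ct^{q}+o(t^{q})$ for some $c>0$. Since $f$ is Lipschitz (with some constant $L>0$) on $T$, this gives
\[
|f(\gamma(t))-f(\gamma'(t))|\le L\,\|\gamma(t)-\gamma'(t)\|=Lc\,t^{q}+o(t^{q}).
\]
On the other hand, by definition of $B_\beta$ and $H_\beta$ we have $f(\gamma(t))=at^{\beta}+o(t^{\beta})$ with $a\ne 0$, and $f(\gamma'(t))=o(t^{\beta})$. Therefore the left-hand side above satisfies
\[
|f(\gamma(t))-f(\gamma'(t))|=|a|t^{\beta}+o(t^{\beta}),
\]
which has order exactly $\beta$ in $t$. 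Comparing the two estimates forces $q\le\beta$, and combined with $q\ge\beta$ we conclude $\tord(\gamma,\gamma')=\beta$.

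There is essentially no obstacle here: the whole argument is the standard order-comparison trick for Lipschitz functions, and the hypothesis that $T$ is normally embedded is used only implicitly, to guarantee that being Lipschitz with respect to the ambient (outer) metric is the correct framework and that the outer tangency order in $T$ is bounded below by $\beta$. No case analysis or invocation of the pizza/zone machinery is needed.
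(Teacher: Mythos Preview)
Your argument is correct: the Lipschitz inequality $|f(\gamma(t))-f(\gamma'(t))|\le L\|\gamma(t)-\gamma'(t)\|$ together with $\ord_\gamma f=\beta$ and $\ord_{\gamma'}f>\beta$ forces $\tord(\gamma,\gamma')\le\beta$, and normal embedding of the $\beta$-H\"older triangle gives the reverse inequality. The paper does not actually reproduce a proof of this lemma; it simply refers to Section~3 of \cite{GabrielovSouza}, and your order-comparison computation is precisely the standard elementary proof one finds there.
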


\begin{Prop}\label{Prop:arcs in B_{beta} are generic}
	Let $T$ and $f$ be as in Definition \ref{Def:B_beta and H_beta}, and let $\{T_{i}\}_{i=1}^{p}$ be a minimal pizza on $T$ associated with $f$. Let $B_0=G(T_1)$, $B_p=G(T_p)$ and, for $0<i<p$, $B_i=G(T_i\cup T_{i+1})$. Then
	\begin{enumerate}
		\item If $\ord_{\lambda_{i}}f=\beta$ then $B_i$ is a perfect $\beta$-zone maximal in $B_\beta$.
		\item If $p>1$ then the set $B_\beta$ is the disjoint union of all perfect $\beta$-zones $B_i$ such that  $\ord_{\lambda_{i}}f=\beta$.
	\end{enumerate}
\end{Prop}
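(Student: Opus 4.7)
The plan is to exploit the pizza structure so that, for each separator $\lambda_i$ with $\ord_{\lambda_i}f=\beta$, the two adjacent slices $T_i,T_{i+1}$ combine into a single $\beta$-H\"older triangle (with the convention that at the endpoints $i=0,p$ the set $B_i$ is $G(T_1)$ or $G(T_p)$), and then to analyze $B_i=G(T_i\cup T_{i+1})$ through the machinery of Subsection~\ref{Subsec: Zones} and the affine width function $\mu_i$. First I would verify that $\beta_i=\beta_{i+1}=\beta$: because $\beta=\ord_{\lambda_i}f$ lies in $Q_i\cap Q_{i+1}$, Proposition~\ref{Prop:width function properties elementary triangle}(2) yields $\beta_j\le\mu_j(\beta)\le\beta$ for $j\in\{i,i+1\}$, while the inclusion $T_j\subset T$ forces $\beta_j\ge\mu(T)=\beta$. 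By the non-archimedean property (Remark~\ref{Rem: non-archimedean property}), the union $T_i\cup T_{i+1}=T(\lambda_{i-1},\lambda_{i+1})$ is itself a $\beta$-H\"older triangle, and Example~\ref{Exam: closed, open and open complete zones} then identifies $B_i$ as a closed $\beta$-zone.

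For Part~(1), I would show $B_i\subset B_\beta$ by taking $\gamma\in G(T_i\cup T_{i+1})$, using Proposition~\ref{Prop:width function properties elementary triangle}(4) applied to the pizza slice containing $\gamma$ to convert the identity $\itord(\gamma,\lambda_{i\pm1})=\beta$ into $\mu_i(\ord_\gamma f)=\beta$, and then using that $\mu_i$ is affine, non-decreasing on the segment $Q_i$ with left endpoint $\beta$ and $\mu_i(\beta)=\beta$, to conclude $\ord_\gamma f=\beta$; the remaining condition $\gamma\in G(T)$ follows by iterating the non-archimedean property, propagating $\itord=\beta$ from $\lambda_{i\pm 1}$ out to the boundary arcs of $T$. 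Perfectness is the delicate point: given $\gamma,\gamma'\in B_i$, I would construct a $\beta$-H\"older triangle $T'\subset T_i\cup T_{i+1}$ whose boundary arcs are chosen in $B_i$ so that $\gamma,\gamma'$ are generic in $T'$, and then verify that the \emph{entire} Valette link of $T'$ lies in $B_i$, i.e., that no interior arc drifts into the open zones $\{\itord(\cdot,\lambda_{i-1})>\beta\}$ or $\{\itord(\cdot,\lambda_{i+1})>\beta\}$; this is controlled by the affine form of $\mu_i,\mu_{i+1}$, which translates high tangency with $\lambda_{i\pm 1}$ into $\ord f>\beta$ and hence outside $B_\beta$. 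Maximality of $B_i$ in $B_\beta$ follows from Lemmas~\ref{Lem: beta zone must have beta intersection part in HT decomp} and~\ref{Lem:itord>beta in H_{beta} and B_{beta} implies same ordf}: a H\"older triangle $T''$ with $V(T'')\subset B_\beta$ meeting $B_i$ cannot cross a separator $\lambda_k$ with $\ord_{\lambda_k}f>\beta$ (otherwise $\lambda_k\in H_\beta$ would violate $V(T'')\subset B_\beta$), so $V(T'')\subset T_i\cup T_{i+1}$, and the affine control again forces $V(T'')\subset B_i$.

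For Part~(2), covering is obtained by placing an arbitrary $\gamma\in B_\beta$ in the pizza slice $T_j$ containing it: since $\ord_\gamma f=\beta$ belongs to $Q_j$ and $\beta$ is the overall minimum of $\ord f$, it must be the lower endpoint of $Q_j$, i.e., one of $\lambda_{j-1},\lambda_j$ has $f$-order $\beta$, so $\gamma$ lies in the corresponding $B_i$. Pairwise disjointness of the $B_i$'s then follows from Lemma~\ref{Lem: there are no adjacent perfect zones}, which says distinct perfect $\beta$-zones are not adjacent, combined with the maximality established in Part~(1) (any overlap would, via Lemma~\ref{Lem: union of adjacent zones is a zone}, produce a strictly larger $\beta$-zone in $B_\beta$, contradicting maximality of either $B_i$).

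The principal obstacle I anticipate is the perfectness clause. Unlike being a zone (immediate from Example~\ref{Exam: closed, open and open complete zones}), perfectness demands that the witnessing H\"older triangle have its \emph{entire} Valette link inside $B_i$, which is genuinely restrictive: a sub-triangle with boundary arcs in $B_i$ may harbour interior arcs whose tangency with $\lambda_{i-1}$ or $\lambda_{i+1}$ exceeds $\beta$, placing them outside $B_i$. Ruling out such contamination while simultaneously arranging $\gamma,\gamma'$ to be generic in the sub-triangle is where the affine structure of $\mu_i,\mu_{i+1}$, Proposition~\ref{Prop:width function properties elementary triangle}(4), and Lemma~\ref{Lem:itord>beta in H_{beta} and B_{beta} implies same ordf} must be marshalled in tandem.
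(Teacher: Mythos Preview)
The paper does not supply its own proof of this proposition: Subsection~\ref{Section: Lipschitz functions on a NE Holder triangle} opens by stating that all results therein are proved in Section~3 of \cite{GabrielovSouza} and may be used without adaptation. So there is no in-paper argument to compare against, and your proposal must stand on its own.

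Your architecture is sound, but there is a genuine gap: you never exploit the \emph{minimality} of the pizza, and without it both the maximality of $B_i$ and the disjointness in Part~(2) break down. Concretely, you argue that a triangle $T''$ with $V(T'')\subset B_\beta$ meeting $B_i$ cannot cross a separator $\lambda_k$ with $\ord_{\lambda_k}f>\beta$, and then conclude $V(T'')\subset T_i\cup T_{i+1}$. But this conclusion needs the \emph{adjacent} separators $\lambda_{i-1},\lambda_{i+1}$ to have $f$-order $>\beta$ (or to be boundary arcs of $T$); if, say, $q_{i+1}=\ord_{\lambda_{i+1}}f=\beta$, then $\lambda_{i+1}\in B_\beta$ and $T''$ may extend into $T_{i+2}$, and moreover $B_i$ and $B_{i+1}$ overlap in $G(T_{i+1})$. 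The missing step is: minimality forbids $q_i=q_{i+1}=\beta$, since then $Q_{i+1}=\{\beta\}$ and one checks directly that $T_i\cup T_{i+1}$ is itself a pizza slice (elementary, with width function equal to $\mu_i$ on $Q_i$), contradicting minimality. Once $q_{i\pm 1}>\beta$ is established, your maximality argument goes through and disjointness is immediate.

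Conversely, your identified ``principal obstacle'' --- perfectness of $B_i$ --- is not an obstacle at all. Since $B_i=G(T_i\cup T_{i+1})$ is the generic-arc set of a $\beta$-H\"older triangle $\tilde T$, perfectness follows from the general fact that $G(\tilde T)$ is a closed perfect zone for any H\"older triangle: a sub-triangle $T'\subset\tilde T$ with boundary arcs in $G(\tilde T)$ cannot contain interior arcs with $\itord(\cdot,\partial\tilde T)>\mu(\tilde T)$, because such an arc would force one of the boundary arcs of $T'$ to have the same high tangency (sub-triangle exponents are monotone). No pizza-specific ``affine control'' is needed here.
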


\begin{Prop}\label{Prop:maximal zones in H_{beta}}
	Let $T$ and $f$ be as in Definition \ref{Def:B_beta and H_beta}, and let $\{T_{i}=T(\lambda_{i-1},\lambda_i)\}_{i=1}^p$ be a minimal pizza associated with $f$. Then
	\begin{enumerate}
		\item  For each $i\in \{1,\ldots,p-1\}$ such that $\lambda_i\in G(T)$ and $\ord_{\lambda_i} f>\beta$,
		
		$H_i=\{\gamma \in G(T)\mid \tord(\gamma,\lambda_{i})>\beta\}$ is an open $\beta$-complete zone in $H_\beta$.
		\item  For each $i\in \{1,\ldots,p\}$, if $\beta_{i}=\beta$ and $\ord_{\lambda_l}f>\beta$ for $l=i-1,i$ then $H'_i=G(T_i)$ is a perfect $\beta$-zone in $H_\beta$.
		\item Each maximal zone $Z\subset H_\beta$ is the union of some zones as in items $(1)$ and $(2)$.
		\item The set $H_{\beta}$ is a finite union of maximal $\beta$-zones.
	\end{enumerate}
\end{Prop}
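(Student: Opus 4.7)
The strategy is to analyze $H_\beta$ through the pizza structure $\{T_i=T(\lambda_{i-1},\lambda_i)\}$, exploiting the non-archimedean property of tangency orders (available because $T$ is normally embedded, so $\tord=\itord$) and the linearity of the width function on each pizza slice.

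For part (1), given an interior boundary arc $\lambda_i$ with $q_i:=\ord_{\lambda_i}f>\beta$, I would first check $H_i\subset H_\beta$. For $\gamma\in H_i$ the Lipschitz property of $f$ gives $|f(\gamma(t))-f(\lambda_i(t))|=O(t^{\tord(\gamma,\lambda_i)})$, and the condition $\tord(\gamma,\lambda_i)>\beta$ together with $q_i>\beta$ yields $\ord_\gamma f>\beta$. The membership $\gamma\in G(T)$ follows from the non-archimedean property applied to $\lambda_i\in G(T)$ and the boundary arcs of $T$. For any two arcs $\gamma,\gamma'\in H_i$, the same inequality gives $\tord(\gamma,\gamma')>\beta$; the sub-triangle $T(\gamma,\gamma')\subset T$ containing $\lambda_i$ is then normally embedded and hence non-singular, and a further application of the non-archimedean property places all of its arcs in $H_i$, so $H_i$ is a zone. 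Openness is immediate because any two arcs of $H_i$ have pairwise tangency strictly greater than $\beta$, ruling out a $\beta$-H\"older triangle inside $H_i$. Finally, the explicit description $H_i=\{\gamma'\in V(T)\mid\itord(\gamma',\lambda_i)>\beta\}$ together with the non-archimedean property establishes $\beta$-completeness.

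For part (2), the hypothesis $\beta_i=\beta$ together with $\ord_{\lambda_{i-1}}f,\ord_{\lambda_i}f>\beta$ forces $Q_i=Q_f(T_i)\subset(\beta,\infty)$: the segment $Q_i$ attains its endpoints at the boundary arcs of $T_i$ (a standard feature of pizza slices, cf.\ Proposition \ref{Prop:width function properties elementary triangle}(2)--(4)), so with both boundary values exceeding $\beta$ the entire segment exceeds $\beta$. Consequently $\ord_\gamma f>\beta$ for every arc of $T_i$, and generic arcs of $T_i$ are generic arcs of $T$ (again by non-archimedean applied at the boundary arcs of $T$, using $\beta_i=\beta$), so $G(T_i)\subset H_\beta$. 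Perfection of $H'_i=G(T_i)$ follows because any two generic arcs of $T_i$ sit as generic arcs inside a sub-H\"older triangle of $T_i\subset T$, which is normally embedded and therefore non-singular.

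For part (3), let $Z\subset H_\beta$ be a maximal zone. By Lemma \ref{Lem: beta zone must have beta intersection part in HT decomp}, some $Z\cap V(T_i)$ is itself a $\beta$-zone, and within each pizza slice $T_i$ every arc of $Z$ either has tangency order $>\beta$ with one of the boundary arcs $\lambda_{i-1},\lambda_i$ carrying $\ord_{\lambda_j}f>\beta$ (hence lies in the corresponding $H_j$ from (1)), or is a generic arc of a slice with $\beta_i=\beta$ and both boundary orders $>\beta$ (hence lies in the corresponding $H'_i$ from (2)). Using Lemma \ref{Lem: union of adjacent zones is a zone}, adjacent zones from the lists in (1) and (2) can be merged into a single zone, and the maximality of $Z$ forces $Z$ to coincide with such a union. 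Part (4) then follows since the pizza is finite, producing only finitely many candidate zones $H_j$ and $H'_i$ and thus only finitely many maximal zones.

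The main obstacle will be part (3): within each pizza slice one must match the arcs of a maximal zone precisely with the prescribed zones $H_j$ and $H'_i$, ruling out stray arcs in $H_\beta$ not captured by the enumeration. This forces a careful use of the linear width function and of Proposition \ref{Prop:width function properties elementary triangle}(4) to translate ``$\ord_\gamma f>\beta$'' into either ``high tangency with a relevant $\lambda_j$'' or ``genericity inside a $\beta$-slice with both endpoints above $\beta$''. Once this dichotomy is established, the gluing via Lemma \ref{Lem: union of adjacent zones is a zone} and the maximality argument are comparatively routine.
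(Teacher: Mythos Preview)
The paper does not actually supply a proof of this proposition: the entire subsection \ref{Section: Lipschitz functions on a NE Holder triangle} is prefaced by the sentence ``The results in this subsection were proved in Section 3 of \cite{GabrielovSouza} and can be used in this text without any adaptation,'' and Proposition \ref{Prop:maximal zones in H_{beta}} is simply stated without argument. So there is no in-paper proof to compare your attempt against; the authors defer entirely to the cited reference.

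That said, your sketch is broadly in the right spirit for how such a statement is established (via the pizza structure, the non-archimedean property, and the affine width function), and your identification of part (3) as the crux is accurate. One point to tighten: in part (2) you assert that the endpoints of $Q_i=Q_f(T_i)$ are attained at the boundary arcs $\lambda_{i-1},\lambda_i$. This is true for pizza slices but is not an immediate consequence of items (2)--(4) of Proposition \ref{Prop:width function properties elementary triangle} as you cite them; it relies on the monotonicity of $\ord_\gamma f$ along an elementary triangle, which is part of the pizza-slice machinery in \cite{PizzaPaper2017} and should be invoked explicitly. Similarly, in part (3) your dichotomy ``either high tangency with a relevant $\lambda_j$ or genericity inside a $\beta$-slice'' needs the observation that if $\gamma\in H_\beta\cap V(T_i)$ with $\beta_i>\beta$, then automatically $\tord(\gamma,\lambda_{i-1})>\beta$ and $\tord(\gamma,\lambda_i)>\beta$, and one must then check that at least one of $\lambda_{i-1},\lambda_i$ has $\ord f>\beta$ (which follows since otherwise $Q_i$ would contain $\beta$, forcing $\mu_{T_i}(\beta)=\beta$ by Proposition \ref{Prop:width function properties elementary triangle}(2), contradicting $\beta_i>\beta$). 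These are fillable gaps rather than errors, but they are exactly the places where the argument in \cite{GabrielovSouza} does real work.
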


\section{Circular snakes}\label{Section: Circular Snakes}
In this section are presented the notions of circular snakes and the Lipschitz invariant zones of their Valette's link. Unless otherwise specified, $X$ will denote a surface with connected link. 

\begin{Def}\label{Def: circular snake}
	 A $\beta$-surface $X$ with connected link is called a circular $\beta$-snake if its Valette's link coincides with the set of its abnormal arcs, i.e., $V(X) = \Abn(X)$.  
\end{Def}

\begin{center}

\tikzset{every picture/.style={line width=0.75pt}} 

\begin{tikzpicture}[x=0.75pt,y=0.75pt,yscale=-1,xscale=1]

\draw    (244.7,64.09) .. controls (216.77,65.08) and (192.92,78.67) .. (191.94,116.56) .. controls (190.96,154.45) and (201.17,171.08) .. (233.97,185.88) .. controls (266.77,200.68) and (288.22,198.33) .. (315.33,199.22) ;
\draw    (244.7,64.09) .. controls (278.9,65.06) and (313.56,80.69) .. (313.56,109.89) .. controls (313.56,139.09) and (266.37,141.88) .. (254.77,159.08) .. controls (243.17,176.28) and (274.77,192.28) .. (315.97,193.88) ;
\draw  [draw opacity=0][fill={rgb, 255:red, 155; green, 155; blue, 155 }  ,fill opacity=0.47 ] (299.67,196.45) .. controls (299.67,187.66) and (306.84,180.52) .. (315.69,180.52) .. controls (324.53,180.52) and (331.7,187.66) .. (331.7,196.45) .. controls (331.7,205.25) and (324.53,212.38) .. (315.69,212.38) .. controls (306.84,212.38) and (299.67,205.25) .. (299.67,196.45) -- cycle ;
\draw    (195.53,109.85) -- (311.16,109.66) ;
\draw [shift={(313.16,109.66)}, rotate = 179.91] [color={rgb, 255:red, 0; green, 0; blue, 0 }  ][line width=0.75]    (10.93,-3.29) .. controls (6.95,-1.4) and (3.31,-0.3) .. (0,0) .. controls (3.31,0.3) and (6.95,1.4) .. (10.93,3.29)   ;
\draw [shift={(193.53,109.85)}, rotate = 359.91] [color={rgb, 255:red, 0; green, 0; blue, 0 }  ][line width=0.75]    (10.93,-3.29) .. controls (6.95,-1.4) and (3.31,-0.3) .. (0,0) .. controls (3.31,0.3) and (6.95,1.4) .. (10.93,3.29)   ;
\draw    (388.75,64.09) .. controls (416.68,65.08) and (440.53,78.67) .. (441.51,116.56) .. controls (442.49,154.45) and (432.28,171.08) .. (399.48,185.88) .. controls (366.68,200.68) and (340.67,197.89) .. (315.33,199.22) ;
\draw    (388.75,64.09) .. controls (354.56,65.06) and (318.44,79.8) .. (318.44,109) .. controls (318.44,138.2) and (367.08,141.88) .. (378.68,159.08) .. controls (390.28,176.28) and (357.17,192.28) .. (315.97,193.88) ;
\draw  [draw opacity=0][fill={rgb, 255:red, 155; green, 155; blue, 155 }  ,fill opacity=0.47 ] (299.67,108.82) .. controls (299.67,100.02) and (306.84,92.89) .. (315.69,92.89) .. controls (324.53,92.89) and (331.7,100.02) .. (331.7,108.82) .. controls (331.7,117.61) and (324.53,124.74) .. (315.69,124.74) .. controls (306.84,124.74) and (299.67,117.61) .. (299.67,108.82) -- cycle ;
\draw    (324.03,109.35) -- (439.66,109.16) ;
\draw [shift={(441.66,109.16)}, rotate = 179.91] [color={rgb, 255:red, 0; green, 0; blue, 0 }  ][line width=0.75]    (10.93,-3.29) .. controls (6.95,-1.4) and (3.31,-0.3) .. (0,0) .. controls (3.31,0.3) and (6.95,1.4) .. (10.93,3.29)   ;
\draw [shift={(322.03,109.35)}, rotate = 359.91] [color={rgb, 255:red, 0; green, 0; blue, 0 }  ][line width=0.75]    (10.93,-3.29) .. controls (6.95,-1.4) and (3.31,-0.3) .. (0,0) .. controls (3.31,0.3) and (6.95,1.4) .. (10.93,3.29)   ;
\draw    (255.53,168.83) -- (377.53,167.87) ;
\draw [shift={(379.53,167.85)}, rotate = 179.55] [color={rgb, 255:red, 0; green, 0; blue, 0 }  ][line width=0.75]    (10.93,-3.29) .. controls (6.95,-1.4) and (3.31,-0.3) .. (0,0) .. controls (3.31,0.3) and (6.95,1.4) .. (10.93,3.29)   ;
\draw [shift={(253.53,168.85)}, rotate = 359.55] [color={rgb, 255:red, 0; green, 0; blue, 0 }  ][line width=0.75]    (10.93,-3.29) .. controls (6.95,-1.4) and (3.31,-0.3) .. (0,0) .. controls (3.31,0.3) and (6.95,1.4) .. (10.93,3.29)   ;

\draw (246.7,88.58) node [anchor=north west][inner sep=0.75pt]    {$\beta $};
\draw (375.2,88.08) node [anchor=north west][inner sep=0.75pt]    {$\beta $};
\draw (314.2,147.08) node [anchor=north west][inner sep=0.75pt]    {$\beta $};
\draw (10,222.83) node [anchor=north west][inner sep=0.75pt]   [align=left] {Figure 3: An example of a non-Lipschitz normally embedded circular $\displaystyle \beta $-snake. Points inside the shaded disks\\represents arcs with the tangency order higher than $\displaystyle \beta $.};
\end{tikzpicture}
\end{center}

\begin{Exam}
	 Any $\beta$-horn $X$ is a Lipschitz normally embedded circular $\beta$-snake. Below it is shown the link of a not LNE circular $\beta$-snake. Moreover, notice that a surface with link as in Figure 1a is not a circular snake, since it contains a Lipschitz singular arc.
\end{Exam}

\begin{Prop}\label{Prop: link of a circular snake}
	If $X$ is a circular snake then the link of $X$ is homeomorphic to $S^1$.
\end{Prop}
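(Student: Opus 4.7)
The plan is to show the link $L_X$ of $X$ is a compact connected $1$-manifold without boundary, from which $L_X \cong S^1$ is automatic. By o-minimality, $L_X$ admits a finite cell decomposition, and away from its finitely many $0$-cells it is already a $1$-manifold, so everything reduces to ruling out $0$-cells of valence $d \neq 2$, where $d$ counts the number of local branches of $L_X$ meeting a given $0$-cell $p$.

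The key step is the following dichotomy for a $0$-cell $p \in L_X$ with $d \neq 2$, where $\gamma \in V(X)$ denotes the arc through $p$. If $d=1$, then a neighborhood of $\gamma$ in $X$ is a single H\"older triangle with $\gamma$ on its boundary, so $\gamma$ is Lipschitz singular by Remark \ref{Rem: boundary arcs are Lips sing arcs}. If $d \geq 3$, then $X$ near $\gamma$ decomposes as a union of $d$ H\"older triangles meeting pairwise along $\gamma$; any normally embedded H\"older triangle $T \subset X$ containing $\gamma$ as an interior arc absorbs exactly two of these branches (since the link of $T$ has valence $2$ at interior points), so the remaining $d-2 \geq 1$ branches yield $\gamma \subset \overline{X \setminus T}$, and $\gamma$ is Lipschitz singular by Definition \ref{DEF: Lipschitz non-singular arc}. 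In both cases $\gamma$ is Lipschitz singular, but Definition \ref{DEF: normal and abnormal arcs and zones} requires every abnormal arc to be Lipschitz non-singular, contradicting the hypothesis $V(X) = \Abn(X)$.

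The main point needing care is the translation between the valence of a $0$-cell $p$ and the local structure of $X$ near the corresponding arc $\gamma$; this relies on the fact that a two-dimensional definable set has a small tubular neighborhood of $\gamma$ consisting of exactly $d$ H\"older triangles meeting along $\gamma$, one for each local branch of $L_X$ at $p$. Once this correspondence is in place, the valence argument shows every $0$-cell of $L_X$ has valence $2$, so $L_X$ is a closed $1$-manifold, and compactness together with connectedness force $L_X \cong S^1$.
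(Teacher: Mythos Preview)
Your proof is correct and follows essentially the same approach as the paper's one-line argument: if the link were not homeomorphic to $S^1$, it would have a boundary or branch point, and the corresponding arc would be Lipschitz singular (Remark~\ref{Rem: boundary arcs are Lips sing arcs}), contradicting $V(X) = \Abn(X)$ since abnormal arcs are by definition Lipschitz non-singular. You have simply unpacked the valence analysis (the cases $d=1$ and $d\ge 3$) that the paper leaves implicit in its citation of that remark.
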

\begin{proof}
	Otherwise $X$ would contain Lipschitz singular arcs (see Remark \ref{Rem: boundary arcs are Lips sing arcs}).
\end{proof}

\begin{Prop}\label{Prop: LNE circular snakes}
	If $X$ is a Lipschitz normally embedded circular snake then $X$ is outer bi-Lipschitz homeomorphic to a $\beta$-horn with $\beta\ge 1$.
\end{Prop}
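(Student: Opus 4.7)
The strategy is to combine the preceding proposition about the topology of the link with Birbrair's inner classification of surface germs with connected link \cite{birbrair1999local} (recalled in the introduction), and then promote the resulting inner bi-Lipschitz equivalence to an outer one using the normal embedding hypothesis.

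First I would invoke Proposition \ref{Prop: link of a circular snake}: the link of $X$ is homeomorphic to $S^{1}$, so $X$ is a definable surface germ with isolated singularity and connected link. Birbrair's theorem then applies and yields an inner bi-Lipschitz homeomorphism $\Phi\colon X\to H_{\beta'}$ onto the $\beta'$-horn of revolution of a $\beta'$-cusp, for some $\beta'\in\F$. The exponent is an inner bi-Lipschitz invariant: $\mu(X)=\inf\itord(\gamma,\gamma')$ (Definition \ref{Def: generic arc of a surface}), and it equals $\beta$ on $X$ by hypothesis and $\beta'$ on $H_{\beta'}$ by direct computation on the horn. Hence $\beta'=\beta$.

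Second, both ends of $\Phi$ are normally embedded: $X$ by hypothesis and $H_\beta$ by construction (the surface of revolution of a single cusp has equivalent inner and outer metrics). Therefore the inner bi-Lipschitz map $\Phi$ is automatically outer bi-Lipschitz, which is the conclusion of the proposition. The bound $\beta\geq 1$ is immediate: for any two arcs $\gamma,\gamma'$ parametrized by distance to the origin one has $\|\gamma(t)-\gamma'(t)\|\leq 2t$, so $\itord(\gamma,\gamma')\geq 1$, and $\mu(X)\geq 1$.

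The only subtlety I anticipate is justifying that a circular snake satisfies the ``isolated singularity'' hypothesis of Birbrair's theorem, but this is essentially topological: having link homeomorphic to the $1$-manifold $S^{1}$ precludes any other singular behavior at the origin beyond the origin itself, which is exactly what Birbrair's theorem requires. All remaining pieces are either direct invocations of results cited in the introduction or the routine fact that normal embedding turns inner bi-Lipschitz equivalences into outer ones.
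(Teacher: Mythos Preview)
Your proposal is correct and follows essentially the same route as the paper: both arguments reduce the statement to Birbrair's classification of surface germs with circular link, with the normal embedding hypothesis upgrading the inner equivalence to an outer one. The paper's proof is the one-line citation ``This follows from Theorem 8.3 in \cite{birbrairOminimal}'', whereas you spell out the intermediate steps (link $\simeq S^1$, matching exponents, promotion via normal embedding); the only adjustment worth making is to cite \cite{birbrairOminimal} rather than \cite{birbrair1999local}, since the ambient setting here is a polynomially bounded o-minimal structure.
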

\begin{proof}
	This follows from Theorem 8.3 in \cite{birbrairOminimal}.
\end{proof}

\begin{remark}\label{Remark: assumption CS are not NE}
    Proposition \ref{Prop: LNE circular snakes} gives a complete classification of LNE circular snakes up to outer bi-Lipschitz homeomorphism. Then, from now on, we shall focus on the Lipschitz geometry of circular snakes which are not LNE. Therefore, except otherwise specified, all circular snakes are assumed to be non LNE.
\end{remark}

\subsection{Circular snakes and their pancake decompositions}

Let $X$ be a circular snake, and let $\{X_{k}\}_{k=1}^{p}$ be a reduced pancake decomposition of $X$. If $X$ is not LNE, then $p\ge 2$ and each $X_{k}$ is a H\"older triangle (see Remark \ref{Rem:pancake of holder triangle is holder triangle}). Moreover, Proposition \ref{Prop: link of a circular snake} implies that $\gamma_{0}=\gamma_p$. 
Thus, from now on we will assume that $\gamma_{j-1}=\gamma_{p-1}$ if $j=0$ and $\gamma_{j+1} = \gamma_1$ if $j=p$. In particular, $X_{j-1} = X_p$ if $j=1$ and $X_{j+1} = X_1$ if $j=p$.

The following Lemma is proved in \cite{GabrielovSouza} as Lemma 4.7. 

\begin{Lem}\label{Lem: maximal abn zones and triangle exponents}
	Let $X$ be a surface germ and $A\subset V(X)$ a maximal abnormal $\beta$-zone. Let $\gamma \in A$, and let $T=T(\lambda,\gamma)\subset X$ and $T'=T(\gamma,\lambda')\subset X$ be Lipschitz normally embedded $\alpha$-H\"older triangles such that $T\cap T'=\gamma$ and $\tord(\lambda,\lambda')>\itord(\lambda,\lambda')$. Then $\alpha\le \beta$.
\end{Lem}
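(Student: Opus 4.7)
I would argue by contradiction, assuming $\alpha>\beta$, with the aim of locating an arc of $A$ at outer tangency strictly less than $\alpha$ from $\gamma$ and then deriving a contradiction from the normally embedded witnesses $T,T'$ of exponent $\alpha$ at $\gamma$.

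First, since $\mu(A)=\beta<\alpha$, Definition~\ref{Def:order of zone} provides arcs $\delta_1,\delta_2\in A$ with $\tord(\delta_1,\delta_2)<\alpha$. Applying the non-archimedean property (Remark~\ref{Rem: non-archimedean property}) to the triple $(\gamma,\delta_1,\delta_2)$ forces at least one of the $\delta_i$, call it $\delta$, to satisfy $\tord(\gamma,\delta)<\alpha$. Because $A$ is a zone containing both $\gamma$ and $\delta$, there is a non-singular H\"older triangle $T_0=T(\gamma,\delta)\subset X$ with $V(T_0)\subset A$; every arc of $T_0$ is thus abnormal and $\mu(T_0)=\itord(\gamma,\delta)\le\tord(\gamma,\delta)<\alpha$.

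Next I would exploit the local structure of $X$ at $\gamma$. Since $\gamma$ is Lipschitz non-singular, the germ of $X$ in a high-tangency neighbourhood of $\gamma$ coincides with the germ of $T\cup T'$, so $T_0$, having $\gamma$ on its boundary, lies locally on one side of $\gamma$; up to swapping $T$ and $T'$ we may assume this side is $T$. Arcs of $T_0$ sufficiently close to $\gamma$ are then interior arcs of the normally embedded $\alpha$-H\"older triangle $T$, so $\tord(\gamma,\cdot)\ge\alpha$ on them, while the opposite boundary arc $\delta$ satisfies $\tord(\gamma,\delta)<\alpha$.

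To close the argument I would enrich $\{T,T'\}$ to a finite decomposition $\{X_j\}$ of $X$ into H\"older triangles (adapting the pancake decomposition so that $T$ and $T'$ appear as members) and apply Lemma~\ref{Lem: beta zone must have beta intersection part in HT decomp}: for some $j$, the set $A\cap V(X_j)$ is itself a $\beta$-zone. By the previous paragraph, the only $X_j$ capable of hosting a $\beta$-subzone of $A$ that meets $V(T_0)$ near $\gamma$ are $T$ and $T'$; but each of these is normally embedded of exponent $\alpha$, so every outer tangency inside is $\ge\alpha>\beta$, preventing the infimum of $\tord$ inside $A\cap V(X_j)$ from being $\beta$. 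This contradicts $\alpha>\beta$.

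The main obstacle is the last step: to rigorously justify that the $\beta$-subzone produced by Lemma~\ref{Lem: beta zone must have beta intersection part in HT decomp} must be housed in $T$ or $T'$. I expect this to require an auxiliary use of Lemma~\ref{Lem: union of adjacent zones is a zone} to merge the local zone of $A$ inside $T\cup T'$ with any hypothetical subzone lying outside, showing that the merger would contradict either the normally embedded structure of $T,T'$ or the maximality of $A$ among abnormal zones containing $\gamma$.
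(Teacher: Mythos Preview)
The paper does not give its own proof of this lemma; the sentence introducing it reads ``The following Lemma is proved in \cite{GabrielovSouza} as Lemma 4.7.'' So there is no in-paper argument to compare your attempt against, and your plan must be judged on its own merits.

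Your opening moves are sound: assuming $\alpha>\beta$, Remark~\ref{Rem: replace tord by itord in def of order of a zone} (or your non-archimedean detour) produces $\delta\in A$ with $\tord(\gamma,\delta)<\alpha$, and the zone property yields a non-singular $T_0=T(\gamma,\delta)$ with $V(T_0)\subset A$; since $\gamma$ is Lipschitz non-singular, $T_0$ must locally enter one of $T,T'$. (A small imprecision: the germ of $X$ near $\gamma$ is contained in $T\cup T'$, not necessarily equal to it, but this suffices.)

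The gap you flag in the closing step is genuine, and the patch you sketch does not close it. Lemma~\ref{Lem: beta zone must have beta intersection part in HT decomp} guarantees only that \emph{some} member $X_j$ of the decomposition carries a $\beta$-subzone of $A$; it says nothing about which one. The maximal abnormal zone $A$ is a global object in $V(X)$: it can extend far beyond $T\cup T'$, and the pairs of arcs realising $\mu(A)=\beta$ may sit entirely inside an $X_j$ disjoint from $T\cup T'$. Your phrase ``that meets $V(T_0)$ near $\gamma$'' is a condition you are imposing, not one the lemma provides. Maximality of $A$ does not help either---it is a statement about set-inclusion among abnormal zones, not about where the infimum $\beta$ is attained---and merging adjacent zones via Lemma~\ref{Lem: union of adjacent zones is a zone} only tells you the union is again a zone of the smaller order; it cannot force the $\beta$-width into $T$ or $T'$. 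In short, the global decomposition route does not deliver the contradiction.

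A more promising continuation of your own step~(4) is to stay local. Since $\mu(T_0)<\alpha=\mu(T)$ and $T_0$ enters $T$ at $\gamma$, the triangle $T_0$ must contain $\lambda$ as an interior arc, whence $\lambda\in A$. The contradiction in \cite{GabrielovSouza} is then extracted at $\lambda$, playing the hypothesis $\tord(\lambda,\lambda')>\itord(\lambda,\lambda')$ against the normal embedding of $T$ and $T'$, rather than through a global pancake-type decomposition.
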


\begin{Lem}\label{Lem:minimal pancake decomp of a snake}
	Let $X$ be a circular $\beta$-snake, and let $\{X_{k}\}_{k=1}^{p}$ be a reduced pancake decomposition of $X$. If $X$ is not LNE, then each $X_{k}$ is a $\beta$-H\"older triangle.
\end{Lem}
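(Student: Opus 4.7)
The plan is to prove $\beta_k := \mu(X_k) = \beta$ for every $k$. Since $X_k$ is a normally embedded pancake inside the $\beta$-surface $X$, the inner and outer tangencies on $X_k$ agree with those in $X$, so $\beta_k \ge \mu(X) = \beta$ is automatic and only the reverse inequality is at issue.

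The key observation is that, for a circular snake, the whole Valette link $V(X)$ is a single maximal abnormal $\beta$-zone. Indeed, by Proposition \ref{Prop: link of a circular snake} the link of $X$ is $S^1$, and since every arc of $X$ is abnormal and hence Lipschitz non-singular, any two arcs of $X$ can be joined by a non-singular H\"older triangle inside $X$ (take the cone over one of the two arcs connecting them in the link). Thus $V(X)$ is a zone, its order equals $\mu(X) = \beta$, and the equality $V(X) = \Abn(X)$ makes it the maximal abnormal zone containing every one of its arcs.

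Fix $k$ and consider the common boundary $\gamma_k$ of the adjacent pancakes $X_k$ and $X_{k+1}$. Since the decomposition is reduced, $X_k \cup X_{k+1}$ is not normally embedded, so there exist arcs $\mu \in V(X_k)$ and $\mu' \in V(X_{k+1})$ with $\tord(\mu,\mu') > \itord(\mu,\mu')$. I first check that $\alpha := \itord(\mu,\gamma_k) = \itord(\mu',\gamma_k)$. If, say, $\itord(\mu,\gamma_k) < \itord(\mu',\gamma_k)$, then the same inequality holds in outer tangency (by normal embeddedness of the pancakes), so by Remark \ref{Rem: non-archimedean property} we get $\tord(\mu,\mu') = \itord(\mu,\gamma_k)$, while the inner path $\mu \to \gamma_k \to \mu'$ yields $\itord(\mu,\mu') \ge \itord(\mu,\gamma_k)$; combined with $\itord \le \tord$ this forces $\itord(\mu,\mu') = \tord(\mu,\mu')$, contradicting the choice of the witness pair.

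Finally, set $T := T(\mu,\gamma_k) \subset X_k$ and $T' := T(\gamma_k,\mu') \subset X_{k+1}$. Each is a normally embedded sub-H\"older triangle of a pancake, of exponent equal to the tangency of its boundary arcs, namely $\alpha$. Because $X_k \cap X_{k+1} = \gamma_k$, we have $T \cap T' = \gamma_k$, and the witness condition $\tord(\mu,\mu') > \itord(\mu,\mu')$ holds by construction. Applying Lemma \ref{Lem: maximal abn zones and triangle exponents} to $\gamma_k \in V(X)$ yields $\alpha \le \beta$, and since $\mu \in X_k$ gives $\alpha \ge \beta_k$, and symmetrically $\alpha \ge \beta_{k+1}$, we conclude $\beta_k = \beta_{k+1} = \beta$. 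Running this argument for each $k$ finishes the proof. The only delicate point is matching the exponents of $T$ and $T'$ so that Lemma \ref{Lem: maximal abn zones and triangle exponents} can be invoked, and this is precisely what the non-archimedean argument above accomplishes.
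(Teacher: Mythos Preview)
Your proof is correct and follows essentially the same route as the paper: both arguments identify $V(X)$ as the (unique) maximal abnormal $\beta$-zone and then apply Lemma~\ref{Lem: maximal abn zones and triangle exponents} at a boundary arc $\gamma_k$ to force the exponent of a normally embedded triangle on each side down to $\beta$.

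The one genuine difference is the starting point. The paper begins from the abnormality of $\gamma_j$ to obtain the pair $T,T'$ (with equal exponents guaranteed by Remark~\ref{Rem:triangles of abnormal arc}) and then invokes reducedness to place them in the adjacent pancakes; you instead use reducedness first to pick witnesses $\mu\in X_k$, $\mu'\in X_{k+1}$ with $\tord(\mu,\mu')>\itord(\mu,\mu')$, and then run the non-archimedean argument to show $\itord(\mu,\gamma_k)=\itord(\mu',\gamma_k)$ so that the two sub-triangles of the pancakes have matching exponents. Your route has the advantage that the triangles automatically lie inside $X_k$ and $X_{k+1}$, so the step the paper phrases as ``we may assume $\lambda'\in X_{j+1}$'' needs no justification. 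Both approaches yield $\beta_k\le\alpha\le\beta$ and hence $\beta_k=\beta$.
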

\begin{proof}
	Let $\{X_{k}=T(\gamma_{k-1},\gamma_{k})\}_{k=1}^{p}$ be a reduced pancake decomposition of $X$. By Remark \ref{Rem:pancake of holder triangle is holder triangle} it only remains to prove that the exponent of each $X_k$ is $\beta$. 
	
	Consider $\gamma_j$ for some $j=0,\ldots,k$. As $X$ is a circular $\beta$-snake, $\gamma_j$ is abnormal and if $\lambda, \lambda' \in V(X)$ are arcs such that $T=T(\lambda,\gamma_j)$ and $T'=T(\gamma_j,\lambda')$ are LNE $\alpha$-H\"older triangles with $\tord(\lambda, \lambda') > \itord(\lambda, \lambda')$, then $\alpha \le \beta =\mu(X)$, by Lemma \ref{Lem: maximal abn zones and triangle exponents}, because $\Abn(X) = X$. Indeed, we have $\alpha = \beta$, since $T(\lambda,\gamma_j)\subset X$ implies that $\alpha \ge \beta$. 
	
	As $\{X_{k}\}$ is a reduced pancake decomposition, we may assume that $\lambda \in X_{j-1}$ and $\lambda'\in X_{j+1}$. Consequently, $\mu(X_{j+1})=\beta$. As $j$ could be taken arbitrarily, we have $\mu(X_{j+1})=\beta$ for any $j=0,\ldots, k-1$.
\end{proof}

\begin{remark}\label{Rem: adjacent pancakes}
Let $\{X_k\}$ be a reduced pancake decomposition of a circular snake $X$. If $\gamma, \gamma' \in V(X)$ are such that $\tord(\gamma, \gamma') > \itord(\gamma, \gamma')$ then we can assume that $\gamma\subset X_l$ and $\gamma'\subset X_{l+1}$ for some $l$. By Lemma \ref{Lem:minimal pancake decomp of a snake}, one has $\mu(X_l)=\mu(X_{l+1})=\beta$. Thus, enlarging the pancakes $X_l$ and $X_{l+1}$ attaching $\beta$-H\"older triangles to one of their boundary arcs, if necessary, we can further assume that $\gamma \in G(X_l)$ and $\gamma'\in G(X_{l+1})$.
\end{remark}

\begin{Prop}\label{Prop:CS are weakly NE}
	Let $X$ be a circular $\beta$-snake. For any two arcs $\gamma$ and $\gamma'$ in $V(X)$ such that $\tord(\gamma, \gamma') > \itord(\gamma, \gamma')$, we have $\itord(\gamma, \gamma')=\beta$.
\end{Prop}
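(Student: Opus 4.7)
The plan is to reduce the statement to an application of Remark \ref{Rem:triangles of abnormal arc} (Lemma 2.14 in \cite{GabrielovSouza}) via a reduced pancake decomposition of $X$, using the fact that every arc of a circular snake is abnormal.

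First I would fix a reduced pancake decomposition $\{X_k = T(\gamma_{k-1},\gamma_k)\}_{k=1}^{p}$ of $X$ with the cyclic convention adopted in Remark \ref{Rem: adjacent pancakes}. By Lemma \ref{Lem:minimal pancake decomp of a snake}, each $X_k$ is a normally embedded $\beta$-H\"older triangle. Since the hypothesis $\tord(\gamma,\gamma') > \itord(\gamma,\gamma')$ is exactly the condition of Remark \ref{Rem: adjacent pancakes}, I may (after possibly enlarging two adjacent pancakes by attaching $\beta$-H\"older triangles along their boundaries) assume that there exists an index $l$ such that $\gamma \in G(X_l)$ and $\gamma' \in G(X_{l+1})$, so that $\gamma_l$ is the common boundary arc of $X_l$ and $X_{l+1}$.

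Next, I would extract the value $\beta$ from the fact that $\gamma$ is generic in the normally embedded $\beta$-H\"older triangle $X_l$. By Definition \ref{Def: generic arc of a surface} (applied to $X_l$, whose Lipschitz singular arcs include its boundary arc $\gamma_l$) one has $\itord(\gamma,\gamma_l) = \beta$, and since $X_l$ is normally embedded this gives $\tord(\gamma,\gamma_l) = \itord(\gamma,\gamma_l) = \beta$.

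Finally, the abnormality of $\gamma_l$ (which holds because $V(X)=\Abn(X)$) allows me to invoke Remark \ref{Rem:triangles of abnormal arc} with the pair of normally embedded H\"older triangles $T := T(\gamma,\gamma_l) \subset X_l$ and $T' := T(\gamma_l,\gamma') \subset X_{l+1}$: they meet exactly along $\gamma_l$, and by hypothesis $\tord(\gamma,\gamma') > \itord(\gamma,\gamma')$. The remark then yields $\tord(\gamma,\gamma_l) = \itord(\gamma,\gamma')$, and combining with the previous step gives $\itord(\gamma,\gamma') = \beta$, as desired.

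The only delicate point is ensuring that one may really assume $\gamma \in G(X_l)$ and $\gamma' \in G(X_{l+1})$ in \emph{adjacent} pancakes; this is precisely what Remark \ref{Rem: adjacent pancakes} provides, and once that setup is in place the rest is a direct application of the two cited results.
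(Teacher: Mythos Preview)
Your argument is internally consistent and different from the paper's one-line appeal to Lemma~\ref{Lem: maximal abn zones and triangle exponents}, but the step where you invoke the genericity part of Remark~\ref{Rem: adjacent pancakes} is essentially circular. Once you have placed $\gamma$ and $\gamma'$ in adjacent pancakes $X_l$ and $X_{l+1}$ with common boundary $\gamma_l$, your step~4 (via Remark~\ref{Rem:triangles of abnormal arc}) already gives $\itord(\gamma,\gamma') = \tord(\gamma,\gamma_l) = \itord(\gamma,\gamma_l)$; the remaining task is to show this equals $\beta$. You deduce $\itord(\gamma,\gamma_l)=\beta$ from $\gamma\in G(X_l)$, but achieving that genericity \emph{requires} $\itord(\gamma,\gamma_l)=\beta$ to begin with: the ``enlarging'' described in Remark~\ref{Rem: adjacent pancakes} moves the outer boundaries $\gamma_{l-1},\gamma_{l+1}$, not the shared arc $\gamma_l$, and if both $\itord(\gamma,\gamma_l)>\beta$ and $\itord(\gamma_l,\gamma')>\beta$ held, no such enlargement would make $\gamma$ generic while keeping $\gamma'$ in the adjacent pancake. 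Ruling out that configuration is exactly the content of Proposition~\ref{Prop:CS are weakly NE}.

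The paper closes this gap by applying Lemma~\ref{Lem: maximal abn zones and triangle exponents} directly: your step~4 already shows that $T(\gamma,\gamma_l)$ and $T(\gamma_l,\gamma')$ are normally embedded H\"older triangles of the \emph{same} exponent $\alpha=\itord(\gamma,\gamma')$, meeting at $\gamma_l\in V(X)=\Abn(X)$ (the maximal abnormal $\beta$-zone), so Lemma~\ref{Lem: maximal abn zones and triangle exponents} yields $\alpha\le\beta$, while $\alpha\ge\beta=\mu(X)$ is automatic. Replacing your genericity appeal with this application of Lemma~\ref{Lem: maximal abn zones and triangle exponents} removes the circularity and recovers the paper's argument.
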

\begin{proof}
	This is an immediate consequence of Lemma \ref{Lem: maximal abn zones and triangle exponents}. 
\end{proof}

\subsection{Segments and nodal zones}

If $X$ is Lipschitz normally embedded then all the arcs in $X$ are segment arcs (see Definition \ref{Def:segment-nodal arcs-zones}) and the results in this subsection are trivially true. Thus, we may assume that $X$ is not Lipschitz normally embedded along this subsection (see Remark \ref{Remark: assumption CS are not NE}). Note that the definitions do not require this assumption.

\begin{Def}\label{Def:horn-neighbourhood}
	 Let $X$ be a surface and $\gamma \subset X$ an arc. For $a>0$ and $1\leq\alpha\in\F$, the $(a,\alpha)$-\em horn neighborhood \em of $\gamma$ in $X$ is defined as follows: 
 $$
 HX_{a,\alpha}(\gamma) = \bigcup_{0\le t \ll1} X\cap S(0,t)\cap \overline{B}(\gamma(t),at^{\alpha}),
 $$
	where $S(0,t)=\{x \in \mathbb{R}^{n}\mid ||x||=t\}$ and $\overline{B}(y,R) = \{x \in \mathbb{R}^{n}\mid ||x - y||\le R\}$.
\end{Def}

\begin{remark}
	 When there is no confusion about the surface $X$ being considered, one writes $H_{a,\alpha}(\gamma)$ instead of $HX_{a,\alpha}(\gamma)$.
\end{remark}

\begin{Def}\label{Def of multiplicity}
	 If $X$ is a circular $\beta$-snake and $\gamma$ an arc in $X$, the \em multiplicity \em of $\gamma$, denoted by $m_{X}(\gamma)$ (or just $m(\gamma)$), is defined as the number of connected components of $HX_{a,\beta}(\gamma)\setminus \{0\}$ for $a>0$ small enough.
\end{Def}

\begin{Def}\label{DEF: constant zone}
	 Let $X$ be a $\beta$-surface and $Z \subset V(X)$ a zone. We say that $Z$ is a \em constant zone \em of multiplicity $q$ (notation $m(Z)=q$) if all arcs in $Z$ have the same multiplicity $q$.
\end{Def}

\begin{Def}\label{Def:segment-nodal arcs-zones}
	 Let $X$ be a circular $\beta$-snake and $\gamma \subset X$ an arc. We say that $\gamma$ is a \em segment arc \em if there exists a $\beta$-H\"older triangle $T \subset X$ such that $\gamma$ is a generic arc of $T$ and $V(T)$ is a constant zone. Otherwise $\gamma$ is a \em nodal arc \em. We denote the set of segment arcs and the set of nodal arcs in $X$ by $\mathbf{S}(X)$ and $\mathbf{N}(X)$, respectively. A \em segment \em of $X$ is a maximal zone in $\mathbf{S}(X)$. A \em nodal zone \em of $X$ is a maximal zone in $\mathbf{N}(X)$. We write $\Seg_{\gamma}$ or $\Nod_{\gamma}$ for a segment or a nodal zone containing an arc $\gamma$.
\end{Def}

\begin{Prop}\label{Prop:segment of a CS is a perfect zone}
	If $X$ is a circular $\beta$-snake then each segment of $X$ is a closed perfect $\beta$-zone.
\end{Prop}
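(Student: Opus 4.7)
Write $S=\Seg_\gamma$ and fix a $\beta$-H\"older triangle $T\subset X$ that witnesses $\gamma$ being a segment arc, so $\gamma\in G(T)$ and $V(T)$ is a constant zone of some multiplicity $m$. The first observation is that the same triangle $T$ witnesses the segment-arc property for \emph{every} $\gamma'\in G(T)$: $T$ is a $\beta$-H\"older triangle with $V(T)$ constant and $\gamma'$ is a generic arc of $T$. Thus $G(T)\subset \mathbf{S}(X)$. By Example \ref{Exam: closed, open and open complete zones}, $G(T)$ is a closed $\beta$-zone containing $\gamma$, so maximality of $S$ in $\mathbf{S}(X)$ forces $G(T)\subset S$. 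This immediately yields $\mu(S)=\beta$ and the closedness of $S$, since the $\beta$-H\"older triangle witnessing that $G(T)$ is closed is contained in $S$.

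The remaining task is perfectness. Given distinct $\gamma_1,\gamma_2\in S$, each $\gamma_i$ has its own witness triangle $T^i$, and by the same argument $G(T^i)\subset S$. Since $S$ is a zone, there is a non-singular H\"older triangle $T_{12}=T(\gamma_1,\gamma_2)\subset X$ with $V(T_{12})\subset S$ and $\mu(T_{12})\ge\mu(X)=\beta$. Because the link of $X$ is $S^1$ (Proposition \ref{Prop: link of a circular snake}), the arcs $\gamma_1,\gamma_2$ separate the link into two sides; each $\gamma_i$ is an interior generic arc of $T^i$, so $T^i$ already extends $\gamma_i$ on both sides, and we can pick $\mu_i\in G(T^i)$ on the side of $\gamma_i$ opposite $T_{12}$ with $\itord(\mu_i,\gamma_i)=\beta$ (such $\mu_i$ exists because $G(T^i)$ has order $\beta$). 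Then $T(\mu_i,\gamma_i)$ is a $\beta$-H\"older triangle with $V(T(\mu_i,\gamma_i))\subset G(T^i)\subset S$, and the concatenation
\[
T^*=T(\mu_1,\gamma_1)\cup T_{12}\cup T(\gamma_2,\mu_2)=T(\mu_1,\mu_2)
\]
is a H\"older triangle of exponent $\min\{\beta,\mu(T_{12}),\beta\}=\beta$ with $V(T^*)\subset G(T^1)\cup V(T_{12})\cup G(T^2)\subset S$.

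The last point to check, and the main obstacle of the argument, is that $\gamma_1$ and $\gamma_2$ are generic arcs of $T^*$, i.e., that $\itord(\gamma_1,\mu_2)=\itord(\gamma_2,\mu_1)=\beta$. If $\itord(\gamma_1,\gamma_2)>\beta$, this is automatic from the non-archimedean property (Remark \ref{Rem: non-archimedean property}) applied to each triple $\{\gamma_1,\gamma_2,\mu_j\}$, since $\itord(\gamma_j,\mu_j)=\beta\ne\itord(\gamma_1,\gamma_2)$. If $\itord(\gamma_1,\gamma_2)=\beta$, one must further refine the choice of $\mu_i$ so that it lies outside the open $\beta$-complete horn of $\gamma_{3-i}$ inside $G(T^i)$; this is possible because that horn cannot swallow the whole $\beta$-zone $G(T^i)$ (otherwise all pairs of arcs in $G(T^i)$ would have inner tangency exceeding $\beta$ by non-archimedeanity, contradicting $\mu(G(T^i))=\beta$). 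Granting this choice, $T^*$ is a $\beta$-H\"older triangle contained in $S$ with $\gamma_1,\gamma_2\in G(T^*)$, so $S$ is perfect.
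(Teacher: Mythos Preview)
Your proof is correct and follows the same route as the paper: enlarge the two witness $\beta$-H\"older triangles on the sides away from the connecting triangle $T_{12}=T(\gamma_1,\gamma_2)$ and take the resulting $\beta$-H\"older triangle $T^*\subset S$, in which both given arcs are generic. Your final case analysis is unnecessary, however, and the paper dispenses with it: because $T(\gamma_2,\mu_2)$ already has exponent $\beta$ and lies on the far side of $\gamma_2$ from $\gamma_1$, the sub-triangle of $T^*$ between $\gamma_1$ and $\mu_2$ contains $T(\gamma_2,\mu_2)$ and hence has exponent exactly $\beta$, so $\itord_{T^*}(\gamma_1,\mu_2)=\beta$ automatically, regardless of the value of $\itord(\gamma_1,\gamma_2)$.
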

\begin{proof}
	Given arcs $\gamma$ and $\gamma'$ in a segment $S$ of $X$, by Definition \ref{Def:segment-nodal arcs-zones}, there exist $\beta$-H\"older triangles $T=T(\gamma_1,\gamma_2)$ and $T'=T(\gamma'_{1},\gamma'_{2})$ such that $\gamma$ and $\gamma'$ are generic arcs of $T$ and $T'$, respectively, and $V(T)\subset S$ and $V(T')\subset S$ are constant zones. This immediately implies that $S$ is a closed $\beta$-zone.
	
	 In order to prove that $S$ is a perfect zone, assume that $\gamma_2$ and $\gamma'_1$ are in $T(\gamma,\gamma')$ and $V(T(\gamma,\gamma')) \subset S$. Thus, $T(\gamma_1,\gamma'_2)$ is a $\beta$-H\"older triangle with Valette link contained in $S$ and containing both $\gamma$ and $\gamma'$ as generic arcs.
\end{proof}

\begin{remark}
	 The proof of Proposition \ref{Prop:segment of a CS is a perfect zone} also works to prove Proposition 4.20 of \cite{GabrielovSouza}. There it is given a proof based on an old version (not equivalent to the current one) of the definition of perfect zone, which depended on the order of the zone (a $\beta$-zone $Z$ is perfect if for any arc $\gamma \in Z$ there is a $\beta$-H\"older triangle $T$ with $V(T)\subset Z$ such that $\gamma$ is a generic arc of $T$). 
\end{remark}

\begin{Lem}\label{Lem:multipizza and generic arcs of multipizza triangle for CS}
	Let $X$ be a circular $\beta$-snake and $\{X_{k}\}_{k=1}^{p}$ a pancake decomposition of $X$. Let $T=X_{j}$ be one of the pancakes and consider the set of germs of Lipschitz functions $f_{k}\colon (T,0) \rightarrow (\mathbb{R},0)$ given by $f_{k}(x)=d(x,X_{k})$. If the set $\{T_{i}\}$ of $\beta_i$-H\"older triangles is a multipizza on $T$ associated with $\{f_{1},\ldots, f_{p}\}$ then, for each $i$, the following holds:
	\begin{enumerate}
		\item $\mu_{ik}(\ord_{\gamma}f_{k})=\beta_{i}$ for all $k \in \{1, \ldots,p\}$ and all $\gamma \in G(T_{i})$, thus $G(T_{i})$ is a constant zone.
		\item $V(T_{i})$ intersects at most one segment of $X$.
		\item If $V(T_i)$ is contained in a segment then it is a constant zone.
	\end{enumerate}	
\end{Lem}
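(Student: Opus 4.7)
The equality $\mu_{ik}(\ord_\gamma f_k) = \beta_i$ in part (1) is immediate from Proposition~\ref{Prop:width function properties elementary triangle}(3) applied to the pizza slice $T_i$ with respect to each function $f_k$. To then show that $G(T_i)$ is a constant zone, I plan to first verify that for each $k$ the order $\ord_\gamma f_k$ does not depend on the choice of $\gamma \in G(T_i)$. Since $T_i$ is a pizza slice, the width function $\mu_{ik}(q) = a_{ik} q + b_{ik}$ is affine; if $a_{ik} \ne 0$ then $\mu_{ik}^{-1}(\beta_i)$ is a single value, while if $a_{ik} = 0$ then by Proposition~\ref{Prop:width function properties elementary triangle}(1) the set $Q_{f_k}(T_i)$ is a single point and $\ord_\gamma f_k$ is even constant on all of $V(T_i)$. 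Hence the tuple $(\ord_\gamma f_k)_{k=1}^{p}$ takes the same value for all $\gamma \in G(T_i)$. The multiplicity $m(\gamma)$ counts the connected components of $HX_{a,\beta}(\gamma) \setminus \{0\}$ for small $a$, and for $\gamma \in X_j$ this is determined by which other pancakes $X_k$ cluster around $\gamma$ at exponent $\beta$ and how they merge; this data is encoded by the orders $\ord_\gamma f_k$ together with the fixed adjacency combinatorics of the pancake decomposition, so $m$ is constant on $G(T_i)$.

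For (2), suppose for contradiction that $V(T_i)$ meets distinct segments $S_1 \ne S_2$, with $\gamma_a \in V(T_i) \cap S_a$ for $a=1,2$. By Definition~\ref{Def:segment-nodal arcs-zones}, each $\gamma_a$ is a generic arc of some $\beta$-H\"older triangle $T_a' \subset X$ with $V(T_a')$ a constant zone. The H\"older triangle $T(\gamma_1, \gamma_2) \subset T_i$ has, by (1), generic arcs sharing a single multiplicity, which must agree with the constant multiplicities of $V(T_1')$ and $V(T_2')$ along $\gamma_1$ and $\gamma_2$ respectively. Gluing $T_1'$, $T(\gamma_1, \gamma_2)$, and $T_2'$ along the common boundaries $\gamma_1$ and $\gamma_2$ then yields a $\beta$-H\"older triangle $T''$ with $V(T'')$ constant and $\gamma_1, \gamma_2 \in G(T'')$, placing $\gamma_1$ and $\gamma_2$ in a common segment and contradicting $S_1 \ne S_2$. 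For (3), if $V(T_i) \subset S$ then the two boundary arcs $\lambda_{i-1}, \lambda_i$ are themselves segment arcs, each generic in a $\beta$-H\"older triangle $T^{i-1}, T^{i}$ with constant Valette link. Since each of these overlaps $G(T_i)$ in a nonempty open family of arcs, the constant multiplicity of $V(T^{i-1})$ (respectively $V(T^i)$) coincides with the common multiplicity on $G(T_i)$, so $V(T_i)$ is a constant zone.

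The principal technical difficulty lies in (1): making precise the passage from the analytic data $\ord_\gamma f_k$ extracted from the multipizza to the combinatorial invariant $m(\gamma)$ defined through connected components of horn neighborhoods. Two pancakes $X_k, X_{k'}$ contribute to the same component of $HX_{a,\beta}(\gamma)$ only when their closest approaches to $\gamma$ cluster together asymptotically, and this merging is governed jointly by the tuple $(\ord_\gamma f_k)_k$ and by the fixed pancake incidence data of $X$. Once this correspondence is pinned down, parts (2) and (3) follow by standard zone-patching and maximality arguments using Definition~\ref{Def:segment-nodal arcs-zones}.
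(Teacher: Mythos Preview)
Your treatment of (1) is more explicit than the paper's one-line appeal to Definition~\ref{Def:Pizza decomp} and Proposition~\ref{Prop:width function properties elementary triangle}, and your observation that affineness of $\mu_{ik}$ forces $\ord_\gamma f_k$ to be constant on $G(T_i)$ is correct. The passage from constant orders to constant multiplicity, which you flag as the main technical point, is not spelled out in the paper either; it is effectively the content of the later Remark~\ref{multiplicity formula}, and your horn-neighborhood discussion is a reasonable gloss on it.

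For (2) your contradiction strategy matches the paper's, but there is a real gap. You take arbitrary $\gamma_1,\gamma_2\in V(T_i)$ and then invoke (1) to claim that $G(T(\gamma_1,\gamma_2))$ is a constant zone; however, (1) gives constancy only on $G(T_i)$, and $G(T(\gamma_1,\gamma_2))\subset G(T_i)$ is not automatic unless $\gamma_1,\gamma_2$ are themselves generic in $T_i$. The paper closes this gap by invoking Proposition~\ref{Prop:segment of a CS is a perfect zone} (segments are perfect $\beta$-zones) and splitting into cases: when $\beta_i>\beta$, any two arcs of $T_i$ have inner tangency order exceeding $\beta$, so $V(T_i)$ is absorbed into a single segment directly; when $\beta_i=\beta$, perfectness lets one replace the arbitrary representatives by arcs $\lambda,\lambda'\in G(T_i)$ still lying in the two segments. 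With that reduction the paper also avoids your gluing construction: $T'=T(\lambda,\lambda')\subset T_i$ is already a $\beta$-H\"older triangle with $G(T')\subset G(T_i)$ constant, so every arc of $T'$ is a segment arc and $\lambda,\lambda'$ fall in the same segment. Your argument for (3) is essentially what the paper intends by its one-line reference to Definition~\ref{Def:segment-nodal arcs-zones} and part~(1).
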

\begin{proof}
	$(1)$. This is an immediate consequence of Definition \ref{Def:Pizza decomp} and Proposition \ref{Prop:width function properties elementary triangle}.
	
	$(2)$. If $\beta_{i}>\beta$ and $V(T_{i})$ intersects a segment $S$, then $V(T_{i})\subset S$, since $S$ is a perfect $\beta$-zone by Proposition \ref{Prop:segment of a CS is a perfect zone}.
	
	Let $\beta_{i}=\beta$. Suppose that $V(T_{i})$ intersects distinct segments $S$ and $S'$. As each segment is a perfect $\beta$-zone, we can choose arcs $\lambda \in S$ and $\lambda' \in S'$ so that $\lambda, \lambda' \in G(T_{i})$. Let $T'=T(\lambda,\lambda')$ be a H\"older triangle contained in $T_i$. By item $(1)$ of this Lemma, all arcs in $G(T')$ have the same multiplicity. It follows from Definition \ref{Def:segment-nodal arcs-zones} that each arc in $T'$ is a segment arc. Thus, $\lambda$ and $\lambda'$ belong to the same segment, a contradiction.
	
	$(3)$ This a consequence of Definition \ref{Def:segment-nodal arcs-zones} and item $(1)$ of this Lemma. 
\end{proof}

\begin{Prop}\label{Prop:segment has finitely many zones}
	Let $X$ be a circular $\beta$-snake. Then
	\begin{enumerate}
		\item There are no adjacent segments in $X$.
		\item $X$ has finitely many segments.
	\end{enumerate}
	
\end{Prop}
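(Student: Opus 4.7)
The proposition has two parts, and both follow cleanly from machinery already developed in the excerpt. My plan is to dispatch them in order.

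For part (1), the plan is to apply Proposition \ref{Prop:segment of a CS is a perfect zone} together with Lemma \ref{Lem: there are no adjacent perfect zones}. Every segment of $X$ is a closed perfect $\beta$-zone by the former, and perfect $\beta$-zones in a surface cannot be adjacent by the latter. Hence any two distinct segments fail to be adjacent. The only subtlety to verify is that distinct segments are disjoint, which is immediate from the fact that segments are defined as maximal zones in $\mathbf{S}(X)$ and two intersecting zones would merge into a single zone by Lemma \ref{Lem: union of adjacent zones is a zone}, contradicting maximality.

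For part (2), the plan is a pigeonhole argument based on a multipizza decomposition of each pancake of $X$. Fix a reduced pancake decomposition $\{X_k\}_{k=1}^{p}$ of $X$. For each index $j \in \{1,\dots,p\}$, let $\{T_i^{(j)}\}$ denote a multipizza on $X_j$ associated with the family of Lipschitz functions $f_k(x) = d(x, X_k)$ for $k=1,\dots,p$ (this multipizza exists by Remark \ref{REM: existence of a multipizza}). By Lemma \ref{Lem:multipizza and generic arcs of multipizza triangle for CS}(2), each triangle $T_i^{(j)}$ meets at most one segment of $X$. Since $X = \bigcup_j X_j$ and $X_j = \bigcup_i T_i^{(j)}$, every arc of $X$ lies in at least one of the finitely many triangles $T_i^{(j)}$. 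Therefore each segment of $X$, being a nonempty set of arcs, meets at least one $T_i^{(j)}$. The assignment sending each segment $S$ to some $T_i^{(j)}$ meeting it is then an injection from the set of segments into the finite set of multipizza triangles, so there are only finitely many segments.

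I do not expect any serious obstacle here; the two parts are essentially book-keeping over results already established. The only point requiring a little care is ensuring that the auxiliary multipizza is chosen uniformly on each pancake with respect to the distance functions $f_k$, so that Lemma \ref{Lem:multipizza and generic arcs of multipizza triangle for CS} applies verbatim. Once that is done, the finiteness conclusion is purely combinatorial.
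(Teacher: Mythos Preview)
Your proposal is correct and follows essentially the same route as the paper: part (1) is deduced from Proposition \ref{Prop:segment of a CS is a perfect zone} and Lemma \ref{Lem: there are no adjacent perfect zones}, and part (2) is obtained by covering $X$ by pancakes, applying Lemma \ref{Lem:multipizza and generic arcs of multipizza triangle for CS}(2) to a multipizza on each pancake, and counting. Your disjointness remark in part (1) is harmless but superfluous, since adjacency of zones already requires $Z\cap Z'=\emptyset$ by Definition \ref{Def:adjacent zones}.
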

\begin{proof}
	$(1)$ This is an immediate consequence of Proposition \ref{Prop:segment of a CS is a perfect zone} and Lemma \ref{Lem: there are no adjacent perfect zones}.
	
	$(2)$ Let $\{X_{k}\}_{k=1}^{p}$ be a pancake decomposition of $X$. It is enough to show that, for each pancake $X_{j}$, $V(X_{j})$ intersects with finitely many segments. But this follows from Lemma \ref{Lem:multipizza and generic arcs of multipizza triangle for CS}, since there are finitely many H\"older triangles in a multipizza.
\end{proof}

\begin{Lem}\label{Lem:itord > beta imples same multiplicity}
	Let $X$ be a circular $\beta$-snake. Then, any two arcs in $V(X)$ with inner tangency order higher than $\beta$ have the same multiplicity.
\end{Lem}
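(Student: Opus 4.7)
The plan is to compare the horn neighborhoods of $\gamma$ and $\gamma'$ at scale $\beta$ directly. Since the inner distance dominates the outer one, $\itord(\gamma,\gamma') > \beta$ forces $\tord(\gamma,\gamma') > \beta$, so for every $\varepsilon > 0$ one has $\|\gamma(t)-\gamma'(t)\| < \varepsilon\, t^{\beta}$ for all sufficiently small $t > 0$. Applying the triangle inequality on each sphere $S(0,t)$ then yields the nested germ inclusions
\[
HX_{a,\beta}(\gamma) \;\subseteq\; HX_{a+\varepsilon,\beta}(\gamma') \;\subseteq\; HX_{a+2\varepsilon,\beta}(\gamma),
\]
together with the symmetric chain obtained by swapping the roles of $\gamma$ and $\gamma'$.

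Next I would use definability: the integer-valued function
$c \mapsto \#\pi_0\bigl(HX_{c,\beta}(\gamma)\setminus\{0\}\bigr)$
is piecewise constant and stabilises, as $c \to 0^+$, at $m(\gamma)$ by Definition \ref{Def of multiplicity}; analogously for $\gamma'$. I pick $a > 0$ and $\varepsilon \in (0,a)$ small enough that this count is constantly $m(\gamma)$, resp.\ $m(\gamma')$, on the whole interval $(0,\,a+2\varepsilon]$, and such that $\|\gamma(t)-\gamma'(t)\| < \varepsilon\, t^{\beta}$ holds on a common representative domain. O-minimal triviality applied to the definable family $\{HX_{c,\beta}(\gamma)\}_c$ then ensures that within such a stable range the natural inclusion $HX_{c_1,\beta}(\gamma) \hookrightarrow HX_{c_2,\beta}(\gamma)$ (for $c_1 \le c_2$) induces a bijection on connected components; the analogue holds for $\gamma'$.

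Applying this bijectivity to the outermost inclusion of the first chain, the composition on $\pi_0$
\[
\pi_0\bigl(HX_{a,\beta}(\gamma)\setminus\{0\}\bigr) \longrightarrow \pi_0\bigl(HX_{a+\varepsilon,\beta}(\gamma')\setminus\{0\}\bigr) \longrightarrow \pi_0\bigl(HX_{a+2\varepsilon,\beta}(\gamma)\setminus\{0\}\bigr)
\]
is a bijection between sets of cardinality $m(\gamma)$. Hence the first arrow is an injection from a set of size $m(\gamma)$ into one of size $m(\gamma')$, yielding $m(\gamma) \le m(\gamma')$. The mirror chain gives the reverse inequality, and therefore $m(\gamma) = m(\gamma')$.

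The main obstacle will be the bijectivity claim invoked above: that on an interval where the component count is constant, the inclusion of horns must induce a bijection on $\pi_0$. The subtle case to rule out is a simultaneous merging of two old components and the birth of one new one, which would preserve the count but destroy bijectivity. I expect to handle this by Hardt's triviality theorem for the definable family $\{HX_{c,\beta}(\gamma)\}_{c>0}$ over the parameter $c$, or, more concretely, by a curve-selection argument: each component of the larger horn must, within the stable range, contain an arc $\theta \in V(X)$ with $\tord(\theta,\gamma)\ge\beta$ that already lies in the smaller horn, witnessing persistence of the component under shrinking $c$.
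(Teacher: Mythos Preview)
Your approach is correct in outline but genuinely different from the paper's. The paper never touches horn neighborhoods directly: it fixes a pancake decomposition $\{X_k\}$, takes the distance functions $f_l(x)=d(x,X_l)$ on a pancake $T=X_j$ containing $\gamma$, reduces both arcs into $T$, and then simply invokes Lemma~\ref{Lem:itord>beta in H_{beta} and B_{beta} implies same ordf} to conclude that for every $l$ one has $\ord_\gamma f_l>\beta\iff\ord_{\gamma'}f_l>\beta$. That equivalence is declared ``enough'' because, within the pizza framework already built, the multiplicity of an arc is read off from the set of indices $l$ with $\ord_\gamma f_l>\beta$. So the paper's proof is a one-line application of earlier machinery, while yours is a self-contained geometric sandwich argument using only $\tord\ge\itord$, the triangle inequality, and o-minimal topology. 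Your route avoids the pancake/pizza apparatus entirely and would work verbatim for any definable surface germ, which is a genuine gain; the price is that you must import Hardt-type results the paper never needs.

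On the gap you flag: Hardt triviality applied to the family $c\mapsto HX_{c,\beta}(\gamma)$ only tells you the fibers are mutually homeomorphic, not that the \emph{inclusion} between two fibers respects components. To close this you should argue on the total space $A=\{(x,c):x\in HX_{c,\beta}(\gamma)\setminus\{0\}\}$: over an interval $(0,c_0)$ where Hardt trivialises $A$, the components of $A|_{(0,c_0)}$ are exactly the images of the components of a fixed fiber. Monotonicity of the horn family then says that for fixed $x$ the vertical segment $\{x\}\times[c_1,c_2]$ lies in $A$, hence in a single component of $A|_{(0,c_0)}$; this forces the inclusion $HX_{c_1,\beta}(\gamma)\hookrightarrow HX_{c_2,\beta}(\gamma)$ to carry each component into the component with the same label, giving the bijection you need. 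Your curve-selection alternative as stated is weaker: producing an arc $\theta$ in each component with $\tord(\theta,\gamma)\ge\beta$ is automatic but does not place $\theta$ in the smaller horn; you would need $\tord(\theta,\gamma)>\beta$, which is not immediate from curve selection alone and effectively requires the same stability argument.
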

\begin{proof}
	Let $\{X_{k}\}_{k=1}^{p}$ be a pancake decomposition of $X$, $T=X_{j}$ one of the pancakes and $\{T_{i}\}$ a multipizza associated with $\{f_1,\ldots, f_p\}$ as in Lemma \ref{Lem:multipizza and generic arcs of multipizza triangle for CS}. Consider arcs $\gamma$ and $\gamma'$ in $V(X)$ such that $\itord(\gamma,\gamma')>\beta$ and $\gamma \in V(T)$. We can suppose that $\gamma,\gamma' \in V(T)$, otherwise we can just replace $\gamma'$ by the boundary arc of $T$ in $T(\gamma,\gamma')$.
	
	It is enough to show that for each $l$ we have $\ord_{\gamma}f_{l}>\beta$ if and only if $\ord_{\gamma'}f_{l}>\beta$. This follows from Lemma \ref{Lem:itord>beta in H_{beta} and B_{beta} implies same ordf}.
\end{proof}

\begin{Cor}\label{Cor:segments and nodal zones are constant zones}
	Let $X$ be a circular $\beta$-snake. Then, all segments and all nodal zones of $X$ are constant zones.
\end{Cor}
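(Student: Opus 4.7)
The plan is to prove the two assertions separately. For a segment $S$ and $\gamma, \gamma' \in S$, Proposition \ref{Prop:segment of a CS is a perfect zone} will produce a $\beta$-H\"older triangle $T = T(\gamma, \gamma') \subset X$ with $V(T) \subset S$ and $\gamma, \gamma' \in G(T)$. I will then fix a reduced pancake decomposition $\{X_k\}_{k=1}^{p}$ and, on each pancake $X_j$, build the multipizza associated to $\{f_l = d(\cdot, X_l)\}_{l=1}^{p}$ from Lemma \ref{Lem:multipizza and generic arcs of multipizza triangle for CS}, refined so that the boundary arcs of $T \cap X_j$ lie among its boundary arcs (a refinement of a pizza is still a pizza, Remark \ref{REM: existence of a multipizza}). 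Then $V(T)$ becomes a finite union of $V(T_i^{(j)})$ over pizza triangles contained in $T$; each is contained in $S$ and so a constant zone by Lemma \ref{Lem:multipizza and generic arcs of multipizza triangle for CS}(3). Adjacent pizza triangles inside a pancake share a boundary arc lying in two constant zones, forcing their multiplicities to coincide; the same happens at pancake boundaries, where the shared arc is a pancake boundary $\gamma_k$ belonging to a pizza triangle on each side. Hence $m$ is constant on $V(T)$, so $m(\gamma) = m(\gamma')$.

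For a nodal zone $N$ with $\gamma, \gamma' \in N$, the zone definition provides a non-singular $\beta'$-H\"older triangle $T = T(\gamma, \gamma') \subset X$ with $V(T) \subset N$. The goal is to show $\beta' > \beta$, whence $\itord(\gamma, \gamma') \ge \beta' > \beta$ and Lemma \ref{Lem:itord > beta imples same multiplicity} delivers $m(\gamma) = m(\gamma')$. Arguing by contradiction, suppose $\beta' = \beta$. Then $T$ is a non-singular $\beta$-H\"older triangle with $V(T) \subset \mathbf{N}(X)$. Applying the same multipizza refinement, $V(T)$ becomes a union of pizza triangles, and since $\mu(T) = \beta$ at least one of them, call it $T_*$, has exponent $\beta$; by Lemma \ref{Lem:multipizza and generic arcs of multipizza triangle for CS}(1), $G(T_*)$ is a constant zone. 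I plan to argue that any $\theta \in G(T_*)$ is in fact a segment arc, contradicting $\theta \in V(T) \subset \mathbf{N}(X)$.

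The hard part will be producing a $\beta$-H\"older triangle $T' \ni \theta$ generic with $V(T')$ constant. The natural candidate is $T' = T_*$ itself: its Valette link decomposes as $G(T_*) \cup H_1 \cup H_2$ (Example \ref{Exam: closed, open and open complete zones}), and by Lemma \ref{Lem:itord > beta imples same multiplicity} each arc in $H_j$ has the multiplicity of the adjacent boundary arc $\lambda_{i\pm 1}^{(j)}$, so the task reduces to matching the multiplicity of $G(T_*)$ with those of the two boundary arcs. I would handle this by propagating constancy through the chain of pizza triangles contained in $V(T)$: each internal boundary arc $\lambda_\ell^{(j)}$ is shared by two pizza triangles inside $T$, each governed by Lemma \ref{Lem:multipizza and generic arcs of multipizza triangle for CS}(1), and combining this with the horn-zone analysis and the pancake-transition matching used in the segment part will force a single common multiplicity throughout $V(T)$, making $\theta$ a segment arc and yielding the desired contradiction.
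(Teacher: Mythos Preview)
Your segment argument is sound and essentially matches the paper's: both propagate multiplicity along a chain of multipizza triangles using Lemma~\ref{Lem:multipizza and generic arcs of multipizza triangle for CS}(3) (available precisely because each $V(T_i^{(j)})\subset S$), with adjacent pieces glued at shared boundary arcs.

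The nodal-zone argument, however, has a genuine gap at the step you yourself flag as ``the hard part''. You correctly reduce to matching $m(G(T_*))$ with the multiplicities of the two boundary arcs of $T_*$, but the propagation you propose cannot close this: Lemma~\ref{Lem:multipizza and generic arcs of multipizza triangle for CS}(1) only gives constancy on $G(T_i)$, and Lemma~\ref{Lem:itord > beta imples same multiplicity} ties a boundary arc $\lambda_i$ to $G(T_i)$ only when $\beta_i>\beta$. At each pizza triangle of exponent exactly $\beta$ (in particular at $T_*$) there is no available relation between $m(\lambda_{i-1}),m(\lambda_i)$ and $m(G(T_i))$, so the chain breaks precisely where you need it. You also cannot appeal to Lemma~\ref{Lem:multipizza and generic arcs of multipizza triangle for CS}(3), since now $V(T_i)\subset N$, not a segment.

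The fix is to abandon $T_*$ as the witness and shrink inside $G(T_*)$. Since $T_*$ has exponent $\beta$, pick $\theta_1,\theta_2\in G(T_*)$ with $\itord(\theta_1,\theta_2)=\beta$; the triangle $\tilde T=T(\theta_1,\theta_2)\subset T_*$ then satisfies $V(\tilde T)\subset G(T_*)$ (any arc between two generic arcs of a H\"older triangle is itself generic). Now $V(\tilde T)$ is constant by Lemma~\ref{Lem:multipizza and generic arcs of multipizza triangle for CS}(1), so every arc of $G(\tilde T)$ is a segment arc, contradicting $V(\tilde T)\subset N$. This is exactly how the paper argues: it intersects $G(T')$ with $G(T_i)$ to obtain a perfect $\beta$-zone inside the constant zone $G(T_i)$ and reads off a segment arc there.
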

\begin{proof}
	Let $\{X_{k}=T(\gamma_{k-1},\gamma_{k})\}_{k=1}^{p}$ be a reduced pancake decomposition of $X$, and $\{T_i\}$ a multipizza on $T=X_j$ associated with $\{f_{1},\ldots, f_{p}\}$ as in Lemma \ref{Lem:multipizza and generic arcs of multipizza triangle for CS}.
	
	Let $S$ be a segment of $X$. One wants to prove that any two arcs of $S$ have the same multiplicity. Consider two arcs $\lambda,\lambda'\in S$. Replacing, if necessary, one of the arcs $\lambda$, $\lambda'$ by one of the boundary arcs of $T$,  one can assume that $\lambda,\lambda'\in V(T)$. Moreover, by Lemma \ref{Lem:itord > beta imples same multiplicity}, it is enough to consider the case where $\itord(\lambda,\lambda') = \beta$. Thus, if $\lambda \in T_i$ and $\lambda'\in T_{i+l}$, for some $l\ge 0$, it follows from Lemma \ref{Lem:multipizza and generic arcs of multipizza triangle for CS} that $m(V(T_{i+1}))=\cdots=m(V(T_{i+l-1}))$, since $V(T_{i+1}),\ldots ,V(T_{i+l-1})$ are subsets of $S$. Finally, as $m(\gamma_i)=m(V(T_{i+1}))=m(\gamma_{i+l-2})$ and $\gamma_i, \gamma_{i+l-2} \in S$, it follows that $m(G(T_i)) = m(G(T_{i+l}))$. Consequently, since $\lambda$ and $\lambda'$ are segment arcs, $m(\lambda)=m(G(T_i))=m(V(T_{i+1}))=m(V(T_{i+l-1}))= m(G(T_{i+l}))=m(\lambda')$.

	Let now $N$ be a nodal zone of $X$. Consider two arcs $\lambda,\lambda'\in N$ and a H\"older triangle $T'=T(\lambda,\lambda')$ such that $V(T') \subset N$. Assume, without loss of generality, that $\lambda,\lambda'\in V(T)$ (otherwise one can replace one of them by its ``closest'' boundary arc of $T$). If $\itord(\lambda,\lambda')=\beta$ then $G(T')\cap G(T_i)\ne \emptyset$ for some $i$ such that $\beta_i=\beta$. As $G(T')$ and $G(T_i)$ are perfect $\beta$-zones, $G(T')\cap G(T_i)$ is also a perfect $\beta$-zone. Item $(1)$ of Lemma \ref{Lem:multipizza and generic arcs of multipizza triangle for CS} implies that $G(T')\cap G(T_i)$ contains a segment arc, since $G(T_i)$ is a constant zone, a contradiction with $V(T')\subset N$. Thus, $\itord(\lambda,\lambda')>\beta$ and $m(\lambda)=m(\lambda')$ by Lemma \ref{Lem:itord > beta imples same multiplicity}.
\end{proof}

\begin{remark}\label{Rem:non-closed zones are constant zones}
	 If $X$ is a circular $\beta$-snake then any open zone $Z$ in $V(X)$, and any zone $Z'$ of order $\beta'>\beta$, is a constant zone.
\end{remark}

\begin{Prop}\label{Prop:nodal zones are finite}
	Let $X$ be a circular $\beta$-snake. Then
	\begin{enumerate}
		\item  For any nodal arc $\gamma$ we have $\Nod_{\gamma}=\{\gamma'\in V(X)\mid \itord(\gamma,\gamma')>\beta\}$. In particular, a nodal zone is an open perfect $\beta$-complete zone (see Definition \ref{Def:complete and open complete zone}).
		\item There are no adjacent nodal zones.
		\item There are finitely many nodal zones in $V(X)$.
	\end{enumerate}
\end{Prop}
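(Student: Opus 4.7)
The plan is to establish for part (1) the set-equality $\Nod_\gamma = Z_\gamma := \{\gamma'\in V(X)\mid\itord(\gamma,\gamma')>\beta\}$ and then read off the three adjectives. The inclusion $\Nod_\gamma\subseteq Z_\gamma$ is essentially the final paragraph of the proof of Corollary~\ref{Cor:segments and nodal zones are constant zones}, where it is shown that two arcs in a nodal zone must have inner tangency order strictly greater than $\beta$. For the reverse inclusion it suffices to show (i) $Z_\gamma$ is a zone and (ii) every arc of $Z_\gamma$ is nodal; then $Z_\gamma$ is a zone in $\mathbf{N}(X)$ containing $\gamma$, so $Z_\gamma\subseteq\Nod_\gamma$ by maximality of $\Nod_\gamma$.

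For (i), given $\gamma_1,\gamma_2\in Z_\gamma$ the non-archimedean property forces $\itord(\gamma_1,\gamma_2)>\beta$; I construct the required H\"older triangle $T(\gamma_1,\gamma_2)\subset X$ using the reduced pancake decomposition, chaining through any intermediate pancakes---the shared boundary arcs are Lipschitz non-singular because $V(X)=\Abn(X)\subseteq V(X)\setminus\Lsing(X)$ in a circular snake---and with $V(T)\subset Z_\gamma$ by another application of non-archimedean. For (ii), suppose for contradiction that $\gamma'\in Z_\gamma$ is a segment arc with witness $\beta$-H\"older triangle $T_0=T(\alpha_1,\alpha_2)\subset X$, $\gamma'\in G(T_0)$ and $V(T_0)$ a constant zone. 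The key claim is $\gamma\in V(T_0)$: by Proposition~\ref{Prop:CS are weakly NE}, $\tord(\gamma,\gamma')=\itord(\gamma,\gamma')>\beta$, so $\gamma$ enters the $(a,\beta)$-horn $H_{a,\beta}(\gamma')$ for any $a>0$; if $\gamma$ were in a component of $H_{a,\beta}(\gamma')\setminus\{0\}$ other than the one containing $\gamma'$, any path in $X$ from $\gamma(t)$ to $\gamma'(t)$ would have to leave the $(a,\beta)$-horn and hence have length of order at least $t^\beta$, contradicting $\itord(\gamma,\gamma')>\beta$. Thus $\gamma$ lies in the horn component of $\gamma'$, which near $\gamma'$ is covered by $T_0$ since $\gamma'\in G(T_0)$. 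Non-archimedean then gives $\itord(\gamma,\alpha_i)=\beta$, so $\gamma\in G(T_0)$, making $\gamma$ a segment arc and contradicting its nodality.

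Given $\Nod_\gamma=Z_\gamma$: it is \emph{open} since pairwise $\itord>\beta$ on $Z_\gamma$ prevents any $\beta$-H\"older triangle from fitting inside; $\beta$\emph{-complete} because non-archimedean renders the defining condition base-point independent within $Z_\gamma$; and \emph{perfect} because for any $\gamma_1\ne\gamma_2\in Z_\gamma$ with $\itord(\gamma_1,\gamma_2)=q$ one can, using the Lipschitz non-singularity of $\gamma_i$, extend the H\"older triangle of (i) past $\gamma_1$ and $\gamma_2$ to produce boundary arcs $\theta_1,\theta_2$ with $\itord(\theta_i,\gamma_i)>q$, so that $\gamma_1,\gamma_2$ become generic in a $q$-H\"older triangle $T(\theta_1,\theta_2)\subset Z_\gamma$. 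For part (2), let $N\ne N'$ be nodal zones; by Remark~\ref{Rem:open complete zones} they are disjoint with $\itord(N,N')\le\beta$. If they were adjacent, a H\"older triangle $T=T(\gamma_1,\gamma_2)\subset X$ with $\gamma_1\in N$, $\gamma_2\in N'$ and $V(T)\subset N\cup N'$ would have exponent $\le\beta$, hence equal to $\beta$ since $X$ is a $\beta$-surface. A generic arc $\eta$ of $T$ satisfies $\itord(\eta,\gamma_i)=\beta$; non-archimedean applied to $\itord(\gamma,\gamma_1)>\beta=\itord(\eta,\gamma_1)$ for $\gamma\in N$ yields $\itord(\eta,\gamma)=\beta$, so $\eta\notin N$, and symmetrically $\eta\notin N'$, contradicting $V(T)\subset N\cup N'$.

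For part (3), fix a reduced pancake decomposition $\{X_j\}$; by Lemma~\ref{Lem:minimal pancake decomp of a snake} each $X_j$ is a normally embedded $\beta$-H\"older triangle, and for each other pancake $X_k$ the distance function $f_k(x)=d(x,X_k)$ is Lipschitz on $X_j$ with $\ord_\gamma f_k\ge\beta$ (since $X$ is a $\beta$-surface and $X_j$ is normally embedded). Proposition~\ref{Prop:maximal zones in H_{beta}}(4) then makes $H_\beta(f_k)\subset G(X_j)$ a finite union of maximal $\beta$-zones. A nodal arc in $G(X_j)$ is abnormal and thus, via its witnessing arc in another pancake, lies in some $H_\beta(f_l)$; so each nodal zone meeting $V(X_j)$ contains one of the finitely many maximal $\beta$-zones of $\bigcup_{k\ne j}H_\beta(f_k)$. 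Since distinct nodal zones are disjoint (Remark~\ref{Rem:open complete zones}), only finitely many meet $V(X_j)$, and summing over the pancakes yields finiteness. The main obstacle I expect is the ``$\gamma\in V(T_0)$'' step in (ii): the horn-component analysis relies crucially on Proposition~\ref{Prop:CS are weakly NE} and a careful accounting of inner path lengths in $X$ to rule out that $\gamma$ lies in a sheet of $X$ different from $\gamma'$'s.
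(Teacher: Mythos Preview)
Your treatment of parts (1) and (2) follows the paper's line, with more detail spelled out. One correction in the ``perfect'' step of (1): taking $\itord(\theta_i,\gamma_i)>q$ gives an enlarged triangle of exponent $q$ in which $\gamma_i$ is \emph{not} generic, since then $\itord(\gamma_i,\theta_i)>q=\mu(T)$. You need $\itord(\theta_i,\gamma_i)=\beta''$ for some $\beta''\in(\beta,q)$; the paper does exactly this, invoking Remark~\ref{Rem: replace tord by itord in def of order of a zone} to locate such $\theta,\theta'$. Your horn-component argument in step~(ii) is correct but heavier than necessary: since the link of $X$ is a circle and $\gamma'\in G(T_0)$, the set $\{\theta:\itord(\theta,\gamma')>\beta\}$ already sits inside $V(T_0)$, so $\gamma\in V(T_0)$ directly and non-archimedean gives $\gamma\in G(T_0)$.

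Part (3) has a genuine gap. You assert that a nodal arc $\gamma\in G(X_j)$, being abnormal, lies in some $H_\beta(f_l)$ ``via its witnessing arc in another pancake''. But \emph{every} arc of a circular snake is abnormal, including segment arcs of multiplicity one, and those satisfy $\ord_\gamma f_l=\beta$ for all $l\ne j$; abnormality of $\gamma$ produces arcs $\lambda,\lambda'$ on either side of $\gamma$ with $\tord(\lambda,\lambda')>\itord(\lambda,\lambda')$, not an arc in another pancake close to $\gamma$ itself. The paper bypasses this entirely: having already shown there are finitely many segments (Proposition~\ref{Prop:segment has finitely many zones}) and, in part~(2), that nodal zones are never adjacent, it simply observes that segments and nodal zones must alternate around the circular link, so finiteness of nodal zones is immediate. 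If you want to salvage your direct route, the correct reason a nodal arc lies in some $H_\beta(f_l)$ is that it must be a nodal arc \emph{with respect to} some $X_l$ (if it were a segment arc with respect to every $X_l$, intersecting the finitely many witnessing $\beta$-triangles would make it a segment arc absolutely), and then Remark~\ref{Rem: relative segments and nodal zones and perfect and open complete zones} places it in $H_\beta(f_l)$.
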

\begin{proof}
	$(1)$ Let $\gamma \in V(X)$ be a nodal arc. Given $\gamma'\in V(X)$, if $\itord(\gamma,\gamma')=\beta$ then $\gamma'\notin \Nod_{\gamma}$. Indeed, if $\gamma'\in \Nod_{\gamma}$ and $\itord(\gamma,\gamma')=\beta$, let $T(\gamma,\gamma')$ be a H\"older triangle such that $V(T(\gamma,\gamma'))\subset \Nod_{\gamma}$. As $\Nod_{\gamma}$ is a constant zone, by Corollary \ref{Cor:segments and nodal zones are constant zones}, every arc in $G(T(\gamma,\gamma'))$ is a segment arc, a contradiction with $\Nod_{\gamma}$ being a zone. Thus, a nodal zone is completely determined by any one of its arcs, i.e., $\Nod_{\gamma}=\{\gamma'\in V(X)\mid \itord(\gamma,\gamma')>\beta\}$. Therefore, any nodal zone is an open $\beta$-complete zone.
	
	It only remains to be proved that a nodal zone $N$ is perfect. Given two distinct arcs $\gamma, \gamma' \in N$, let $\beta'=\itord(\gamma,\gamma')$. Since $\Nod_{\gamma}=\{\theta\in V(X)\mid \itord(\gamma,\theta)>\beta\}$, one must have $\beta'>\beta$. Considering $\beta''\in \F$ such that $\beta <\beta''<\beta'$ there are arcs $\theta,\theta'\in V(X)$ such that $\itord(\theta,\gamma) = \itord(\gamma,\theta')=\beta''$ (see Definition \ref{Def:order of zone} and Remark \ref{Rem: replace tord by itord in def of order of a zone}) and $\gamma\subset T=T(\theta,\theta')$. As $\beta''>\beta$ one has $V(T)\subset N$. Finally, since $\beta''<\beta'$ it follows that $\gamma,\gamma'\in G(T)$.
	
	$(2)$ This is an immediate consequence of $(1)$ and Remark \ref{Rem:open complete zones}.

	$(3)$ It is a consequence of Proposition \ref{Prop:segment has finitely many zones} and item $(2)$ of this Proposition.
\end{proof}

\begin{Teo}\label{Teo:segment is the generic arcs from HT of nodal adjacent zones}
	Let $X$ be a circular $\beta$-snake. Then
	\begin{enumerate}
		\item The Vallete link $V(X)$ is a disjoint union of finitely many segments and nodal zones. Moreover, each segment is a closed perfect $\beta$-zone and each nodal zone is an open perfect $\beta$-complete zone.
		\item Each nodal zone in $X$ has exactly two adjacent segments, and each segment in $X$ is adjacent to exactly two nodal zones.
		\item If $N$ and $N'$ are the nodal zones adjacent to a segment $S$, then for any arcs $\gamma \subset N$ and $\gamma' \subset N'$, we have $S = G(T(\gamma,\gamma'))$, where $T(\gamma,\gamma')$ is a H\"older triangle such that $V(T(\gamma,\gamma'))\cap S\ne \emptyset$.
	\end{enumerate}
\end{Teo}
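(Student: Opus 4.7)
The plan is to combine the structural results already proved for segments and nodal zones of a circular snake (Propositions \ref{Prop:segment of a CS is a perfect zone}, \ref{Prop:segment has finitely many zones}, \ref{Prop:nodal zones are finite}) with the fact that the link of $X$ is a circle (Proposition \ref{Prop: link of a circular snake}), reading off the whole picture on $S^{1}$.

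For part (1), I would observe that $V(X)=\mathbf{S}(X)\sqcup\mathbf{N}(X)$ by Definition \ref{Def:segment-nodal arcs-zones}. Lemma \ref{Lem: union of adjacent zones is a zone} implies that the union of all zones in $\mathbf{S}(X)$ containing a fixed segment arc is again a zone, so it is the unique maximal element of $\mathbf{S}(X)$ containing that arc; the same argument works for $\mathbf{N}(X)$. Hence every arc of $V(X)$ belongs to exactly one segment or nodal zone, giving the desired disjoint union. Finiteness is Proposition \ref{Prop:segment has finitely many zones}(2) and Proposition \ref{Prop:nodal zones are finite}(3), while the structural descriptions (closed perfect $\beta$-zone for segments, open perfect $\beta$-complete zone for nodal zones) are Proposition \ref{Prop:segment of a CS is a perfect zone} and Proposition \ref{Prop:nodal zones are finite}(1) respectively.

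For part (2), I would transport the partition from (1) to $S^{1}$. Each zone $Z\subset V(X)$ corresponds to a connected subset of the link: by Definition \ref{Def: zone}, any two arcs of $Z$ lie in a H\"older triangle of $X$ whose Valette link is contained in $Z$, and the link of such a triangle is a connected arc in $S^{1}$. The finitely many segments and nodal zones thus partition $S^{1}$ into finitely many connected pieces, each of which sits between exactly two others. Proposition \ref{Prop:segment has finitely many zones}(1) forbids two consecutive segments and Proposition \ref{Prop:nodal zones are finite}(2) forbids two consecutive nodal zones (in the sense of Definition \ref{Def:adjacent zones}), so the partition must strictly alternate between segments and nodal zones. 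Consequently each zone has exactly two neighbors, both of the opposite type.

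For part (3), I would first check that $T:=T(\gamma,\gamma')$ is a $\beta$-H\"older triangle. Since $\gamma\in N$ and $\gamma'\in N'$ lie in distinct open $\beta$-complete zones, Remark \ref{Rem:open complete zones} gives $\itord(\gamma,\gamma')\le\beta$, and the reverse inequality is automatic from $\beta=\mu(X)$; hence $\itord(\gamma,\gamma')=\beta$ and so $\mu(T)=\beta$. By Remark \ref{Rem: generic arcs of a non-singular HT}, an arc $\theta\in V(T)$ is generic precisely when $\itord(\gamma,\theta)=\itord(\theta,\gamma')=\beta$, which by the explicit description of nodal zones in Proposition \ref{Prop:nodal zones are finite}(1) is equivalent to $\theta\notin N\cup N'$. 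Thus $G(T)=V(T)\setminus(N\cup N')$. Now $V(T)$ is an arc of $S^{1}$ from $\gamma$ to $\gamma'$; the hypothesis $V(T)\cap S\ne\emptyset$ together with the alternation established in part (2) forces this arc to consist of $N\cap V(T)$, then all of $S$, then $N'\cap V(T)$, because any other intermediate zone would contradict the direct adjacency of $S$ to both $N$ and $N'$. Therefore $G(T)=V(T)\setminus(N\cup N')=S$.

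The hardest step I anticipate is the topological bookkeeping underlying (2) and (3), namely formally matching the adjacency of Definition \ref{Def:adjacent zones} with the notion of ``consecutive pieces'' in the $S^{1}$ partition and checking exhaustively that no spurious zone can sit inside the interval of the link realized by $V(T)$. All analytic content (closedness, perfectness, $\beta$-completeness, finiteness) is already in place; once the cyclic alternation on $S^{1}$ is pinned down, the equality $G(T)=S$ in part (3) is a direct consequence of the characterization $G(T)=V(T)\setminus(N\cup N')$.
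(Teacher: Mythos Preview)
Your proposal is correct and follows essentially the same approach as the paper: parts (1) and (2) are deduced directly from Propositions \ref{Prop:segment of a CS is a perfect zone}, \ref{Prop:segment has finitely many zones} and \ref{Prop:nodal zones are finite}, and part (3) rests on the characterization of nodal zones in Proposition \ref{Prop:nodal zones are finite}(1) to identify the non-generic arcs of $T(\gamma,\gamma')$ with $N\cup N'$. Your write-up is in fact more explicit than the paper's (you verify $\mu(T)=\beta$ and spell out the $S^{1}$ alternation that the paper leaves implicit), but the underlying argument is the same.
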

\begin{proof}
	$(1)$ and $(2)$ Propositions \ref{Prop:segment of a CS is a perfect zone}, \ref{Prop:segment has finitely many zones} and \ref{Prop:nodal zones are finite} imply that each segment is a closed perfect $\beta$-zone and each nodal zone is an open perfect $\beta$-zone and $V(X)$ is the union of finitely many segments and nodal zones. Moreover, each nodal zone in $V(X)$ could only be adjacent to a segment $S$, and vice versa. 
	
	$(3)$ Let $N$ and $N'$ be the two nodal zones adjacent to $S$ and let $\gamma \in N$ and $\gamma'\in N'$. Let $T(\gamma,\gamma')$ be a H\"older triangle such that $V(T(\gamma,\gamma'))\cap S\ne \emptyset$. Since each arc in $T(\gamma,\gamma')$ which has tangency order higher than $\beta$ with one of the boundary arcs is a nodal arc, by Proposition \ref{Prop:nodal zones are finite}, each segment arc in $T(\gamma,\gamma')$ must be in $G(T(\gamma,\gamma'))$ and vice versa.
\end{proof}

\begin{Cor}\label{multiplicidade-sem-nodos}
    If $X$ is a circular snake without nodal zones, then $m_X(\gamma)=m_X(\gamma')$, for each $\gamma, \gamma' \in V(X)$. Therefore, one can define the multiplicity $m(X)$ of $X$ as the value $m_X(\gamma)$, for each such $\gamma$.
\end{Cor}

\subsection{Segments and nodal zones with respect to a pancake}\label{Subsection: seg and nodal zones with respect}

\begin{Def}\label{Def: distance functions to pancakes}
	 Let $X$ be a circular $\beta$-snake, and $\{X_{k}\}_{k=1}^{p}$ a pancake decomposition of $X$. If $\mu(X_j)=\beta$ we define the functions $f_1,\ldots, f_p$, where $f_{k}\colon (X_{j},0)\rightarrow (\R,0)$ is given by $f_{k}(x)=d(x,X_{k})$. For each $k$ we define $m_{k}\colon V(X_{j})\rightarrow \{0,1\}$ as follows: $m_{k}(\gamma)=1$ if $\ord_{\gamma}f_{k}>\beta$ and $m_{k}(\gamma)=0$ otherwise. In particular, $m_{j}\equiv 1$.
\end{Def}

\begin{remark}\label{multiplicity formula}
	 Consider $m_1,\ldots,m_p$ as in Definition \ref{Def: distance functions to pancakes}. For each $\gamma\in G(X_{j})$ we have $m(\gamma)=\sum_{k=1}^{p}m_{k}(\gamma)$.
\end{remark}

\begin{Def}\label{DEF: constant zones with resp to a pancake}
	 Consider $m_1,\ldots,m_p$ as in Definition \ref{Def: distance functions to pancakes}. A zone $Z \subset V(X_{j})$ is \em constant with respect to \em $X_{l}$ if $m_{l}|_{Z}$ is constant.
\end{Def}

\begin{Def}\label{DEF: segments and nodal zones with resp to a pancake}
	 Let $m_1,\ldots,m_p$ be as in Definition \ref{Def: distance functions to pancakes}. Consider an arc $\gamma\in G(X_{j})$. For each $l$ we say that $\gamma$ is a \em segment arc with respect to \em $X_{l}$ if there exists a $\beta$-H\"older triangle $T$, with $T\subset X_j$, such that $\gamma$ is a generic arc of $T$ and $V(T)$ is constant with respect to $X_{l}$. Otherwise $\gamma$ is a \em nodal arc with respect to \em $X_{l}$. The set of segment arcs in $G(X_j)$ with respect to $X_l$ and the set of nodal arcs in $G(X_{j})$ with respect to $X_l$ are denoted by $\mathbf{S}_l(X_{j})$ and $\mathbf{N}_l(X_{j})$, respectively. Furthermore, a \em segment with respect to \em $X_{l}$ is a zone $S_{l,j}$ maximal in $\mathbf{S}_l(X_{j})$, and a \em nodal zone with respect to \em $X_{l}$ is a zone $N_{l,j}$ maximal in $\mathbf{N}_l(X_{j})$. We write $\Seg_{\gamma}^{l,j}$ or $\Nod_{\gamma}^{l,j}$ for a segment or a nodal zone with respect to $X_{l}$ in $G(X_j)$ containing an arc $\gamma$.
\end{Def}

\begin{remark}\label{Rem: relative segments and nodal zones and perfect and open complete zones}
	 Let $f_1,\ldots,f_p$ be as in Definition \ref{Def: distance functions to pancakes}. Propositions \ref{Prop:segment of a CS is a perfect zone}, \ref{Prop:segment has finitely many zones} and \ref{Prop:nodal zones are finite} remain valid for segments and nodal zones in $G(X_j)$ with respect to $X_{l}$. In particular, taking $f=f_{l}$ and $T=X_{j}$, segments in $G(X_j)$ with respect to $X_{l}$ are in a one-to-one correspondence with the maximal perfect zones in $B_{\beta}(f_l)$ and $H_{\beta}(f_l)$ (see Definition \ref{Def:B_beta and H_beta}). Similarly, the nodal zones with respect to $X_{l}$ are in a one-to-one correspondence with the open $\beta$-complete zones in  $H_{\beta}(f_l)$ (see Definition \ref{Def:complete and open complete zone} and Propositions \ref{Prop:arcs in B_{beta} are generic} and \ref{Prop:maximal zones in H_{beta}}).
\end{remark}

\subsection{Nodes}

\begin{Def}\label{Def: node}
Let $X$ be a circular $\beta$-snake. A \em node \em $\mathcal{N}$ in $X$ is a union of nodal zones $N_1, \ldots, N_m$ in $X$ such that for any nodal zones $N_i,N_j$ with $N_i\subset \mathcal{N}$, one has $N_j\subset \mathcal{N}$ if and only if $\tord(N_i,N_j)>\beta$. The set $\Spec(\mathcal{N})=\{q_{ij}=\tord(N_i,N_j)\mid i\ne j\}$ is called the \em spectrum \em of $\mathcal{N}$.

\end{Def}

\begin{center}

\tikzset{every picture/.style={line width=0.75pt}} 

\begin{tikzpicture}[x=0.75pt,y=0.75pt,yscale=-1,xscale=1]

\draw    (167.61,132.56) .. controls (126.26,132.56) and (121,123.02) .. (121,108.9) .. controls (121,94.79) and (166.71,90.48) .. (166.48,71.51) .. controls (166.26,52.55) and (140.77,47.36) .. (122.72,50.42) .. controls (104.68,53.47) and (87.24,65.94) .. (84.38,80.94) .. controls (81.52,95.93) and (117.39,91.19) .. (117.92,109.23) .. controls (118.45,127.28) and (82.27,120.73) .. (75.36,139.69) .. controls (68.44,158.65) and (88.74,170.1) .. (108.44,165.64) .. controls (128.14,161.17) and (125.73,137.67) .. (167.99,137.52) ;
\draw    (167.61,132.56) .. controls (208.96,132.56) and (214.98,123.02) .. (214.98,108.9) .. controls (214.98,94.79) and (169.27,90.48) .. (169.49,71.51) .. controls (169.72,52.55) and (195.21,47.36) .. (213.25,50.42) .. controls (231.29,53.47) and (248.74,65.94) .. (251.6,80.94) .. controls (254.45,95.93) and (218.72,91.6) .. (218.19,109.65) .. controls (217.67,127.7) and (253.7,120.73) .. (260.62,139.69) .. controls (267.54,158.65) and (247.23,170.1) .. (227.54,165.64) .. controls (207.84,161.17) and (210.24,137.67) .. (167.99,137.52) ;
\draw    (339.97,103.56) .. controls (340.45,119.13) and (343.83,129.18) .. (353.97,134.14) .. controls (364.1,139.09) and (382.45,139.09) .. (392.1,131.66) .. controls (401.76,124.23) and (388.24,104.77) .. (426.38,104.41) ;
\draw    (339.97,103.56) .. controls (340.45,87.99) and (343.35,77.08) .. (353.48,72.13) .. controls (363.62,67.17) and (381.97,67.17) .. (391.62,74.6) .. controls (401.28,82.03) and (387.76,101.5) .. (425.9,101.85) ;

\draw    (429.41,33.6) .. controls (408.17,33.95) and (394.46,36.43) .. (387.7,43.86) .. controls (380.94,51.29) and (380.94,64.74) .. (391.08,71.82) .. controls (401.22,78.89) and (427.77,68.98) .. (428.25,96.94) ;
\draw    (429.41,33.6) .. controls (450.65,33.95) and (465.54,36.08) .. (472.3,43.51) .. controls (479.06,50.94) and (479.06,64.38) .. (468.92,71.46) .. controls (458.78,78.54) and (432.23,68.63) .. (431.75,96.58) ;

\draw    (521,101.99) .. controls (520.52,86.42) and (517.14,76.37) .. (507,71.42) .. controls (496.86,66.46) and (478.52,66.46) .. (468.86,73.9) .. controls (459.21,81.33) and (472.72,100.79) .. (434.59,101.14) ;
\draw    (521,101.99) .. controls (520.52,117.56) and (517.62,128.47) .. (507.48,133.43) .. controls (497.34,138.38) and (479,138.38) .. (469.35,130.95) .. controls (459.69,123.52) and (473.21,104.06) .. (435.07,103.7) ;

\draw    (431.56,171.6) .. controls (452.8,171.25) and (466.51,168.77) .. (473.27,161.34) .. controls (480.02,153.91) and (480.02,140.46) .. (469.89,133.38) .. controls (459.75,126.31) and (433.2,136.22) .. (432.71,108.26) ;
\draw    (431.56,171.6) .. controls (410.31,171.25) and (395.42,169.12) .. (388.67,161.69) .. controls (381.91,154.26) and (381.91,140.82) .. (392.05,133.74) .. controls (402.18,126.66) and (428.74,136.57) .. (429.22,108.62) ;

\draw    (425.9,101.85) .. controls (429.23,102.24) and (428.26,100.47) .. (428.25,96.94) ;
\draw    (434.59,101.09) .. controls (430.61,101.45) and (431.76,99.82) .. (431.78,96.58) ;
\draw    (426.38,104.46) .. controls (430.36,104.13) and (429.2,105.63) .. (429.19,108.62) ;
\draw    (435.07,103.7) .. controls (431.74,103.32) and (432.7,105.09) .. (432.71,108.62) ;
\draw  [draw opacity=0][fill={rgb, 255:red, 155; green, 155; blue, 155 }  ,fill opacity=0.72 ] (420.04,103.25) .. controls (420.04,98.04) and (424.39,93.82) .. (429.74,93.82) .. controls (435.1,93.82) and (439.45,98.04) .. (439.45,103.25) .. controls (439.45,108.46) and (435.1,112.68) .. (429.74,112.68) .. controls (424.39,112.68) and (420.04,108.46) .. (420.04,103.25) -- cycle ;
\draw  [draw opacity=0][fill={rgb, 255:red, 155; green, 155; blue, 155 }  ,fill opacity=0.72 ] (382.34,133.74) .. controls (382.34,128.53) and (386.69,124.31) .. (392.05,124.31) .. controls (397.4,124.31) and (401.75,128.53) .. (401.75,133.74) .. controls (401.75,138.95) and (397.4,143.17) .. (392.05,143.17) .. controls (386.69,143.17) and (382.34,138.95) .. (382.34,133.74) -- cycle ;
\draw  [draw opacity=0][fill={rgb, 255:red, 155; green, 155; blue, 155 }  ,fill opacity=0.72 ] (381.38,71.82) .. controls (381.38,66.61) and (385.72,62.38) .. (391.08,62.38) .. controls (396.44,62.38) and (400.78,66.61) .. (400.78,71.82) .. controls (400.78,77.02) and (396.44,81.25) .. (391.08,81.25) .. controls (385.72,81.25) and (381.38,77.02) .. (381.38,71.82) -- cycle ;
\draw  [draw opacity=0][fill={rgb, 255:red, 155; green, 155; blue, 155 }  ,fill opacity=0.72 ] (459.22,71.46) .. controls (459.22,66.25) and (463.56,62.03) .. (468.92,62.03) .. controls (474.28,62.03) and (478.62,66.25) .. (478.62,71.46) .. controls (478.62,76.67) and (474.28,80.89) .. (468.92,80.89) .. controls (463.56,80.89) and (459.22,76.67) .. (459.22,71.46) -- cycle ;
\draw  [draw opacity=0][fill={rgb, 255:red, 155; green, 155; blue, 155 }  ,fill opacity=0.72 ] (460.18,133.38) .. controls (460.18,128.18) and (464.53,123.95) .. (469.89,123.95) .. controls (475.25,123.95) and (479.59,128.18) .. (479.59,133.38) .. controls (479.59,138.59) and (475.25,142.82) .. (469.89,142.82) .. controls (464.53,142.82) and (460.18,138.59) .. (460.18,133.38) -- cycle ;
\draw  [draw opacity=0][fill={rgb, 255:red, 155; green, 155; blue, 155 }  ,fill opacity=0.72 ] (206.18,111.72) .. controls (206.18,106.52) and (210.63,102.31) .. (216.12,102.31) .. controls (221.62,102.31) and (226.07,106.52) .. (226.07,111.72) .. controls (226.07,116.91) and (221.62,121.12) .. (216.12,121.12) .. controls (210.63,121.12) and (206.18,116.91) .. (206.18,111.72) -- cycle ;
\draw  [draw opacity=0][fill={rgb, 255:red, 155; green, 155; blue, 155 }  ,fill opacity=0.72 ] (157.71,69.79) .. controls (157.71,64.6) and (162.16,60.38) .. (167.65,60.38) .. controls (173.14,60.38) and (177.59,64.6) .. (177.59,69.79) .. controls (177.59,74.98) and (173.14,79.19) .. (167.65,79.19) .. controls (162.16,79.19) and (157.71,74.98) .. (157.71,69.79) -- cycle ;
\draw  [draw opacity=0][fill={rgb, 255:red, 155; green, 155; blue, 155 }  ,fill opacity=0.72 ] (107.98,109.23) .. controls (107.98,104.04) and (112.43,99.83) .. (117.92,99.83) .. controls (123.41,99.83) and (127.86,104.04) .. (127.86,109.23) .. controls (127.86,114.43) and (123.41,118.64) .. (117.92,118.64) .. controls (112.43,118.64) and (107.98,114.43) .. (107.98,109.23) -- cycle ;
\draw  [draw opacity=0][fill={rgb, 255:red, 155; green, 155; blue, 155 }  ,fill opacity=0.72 ] (158.13,134.92) .. controls (158.13,129.73) and (162.58,125.52) .. (168.07,125.52) .. controls (173.56,125.52) and (178.01,129.73) .. (178.01,134.92) .. controls (178.01,140.12) and (173.56,144.33) .. (168.07,144.33) .. controls (162.58,144.33) and (158.13,140.12) .. (158.13,134.92) -- cycle ;

\draw (8,191.33) node [anchor=north west][inner sep=0.75pt]   [align=left] {Figure 2: Examples of circular snakes with many nodes. Shaded disks represent arcs with the tangency order\\higher than the respective surface exponent. };
\draw (53,37.4) node [anchor=north west][inner sep=0.75pt]    {$a)$};
\draw (315,37.4) node [anchor=north west][inner sep=0.75pt]    {$b)$};

\end{tikzpicture}
\end{center}

\begin{Exam}
	 One can construct a circular snake in $\mathbb{R}^3$ possessing any number of nodes (see Figure 4). The nodes are represented by the intersection of the shaded disks with the surface in Figure 4. Also, there are circular snakes in $\mathbb{R}^3$ with a node containing any number of nodal zones (see Figure 4b).  
\end{Exam}

\begin{Prop}\label{Prop: nodes of a circular snakes}
	Every node of a circular snake contains at least two nodal zones. 
\end{Prop}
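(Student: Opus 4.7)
The plan is to argue by contradiction: if a node $\mathcal{N}$ contained a unique nodal zone $N$, I would produce another nodal zone $N^{*}\ne N$ with $\tord(N,N^{*})>\beta$. By Definition \ref{Def: node} this forces $N^{*}\subset\mathcal{N}$, contradicting $\mathcal{N}=\{N\}$.

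The construction of $N^{*}$ starts from an arc $\gamma\in N$, which I choose to be generic inside some pancake $X_{j}$ of the reduced pancake decomposition $\{X_{k}\}_{k=1}^{p}$ of $X$ (with $p\ge 2$ by Remark \ref{Remark: assumption CS are not NE}). Since $\gamma$ is nodal, it cannot be a generic arc of a $\beta$-pizza slice of the multipizza on $X_{j}$ whose Valette link is contained in a segment, by Lemma \ref{Lem:multipizza and generic arcs of multipizza triangle for CS}. The nodality condition then forces the existence of some $k\ne j$ with $\ord_{\gamma}f_{k}>\beta$, where $f_{k}(x)=d(x,X_{k})$; otherwise one could assemble around $\gamma$ a $\beta$-H\"older triangle of constant multiplicity $1$, making $\gamma$ a segment arc and contradicting $\gamma\in N\subset\mathbf{N}(X)$.

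For such $k$, there exists an arc $\gamma^{*}\in X_{k}$ realising $\tord(\gamma,\gamma^{*})=\ord_{\gamma}f_{k}>\beta$. Using that the union $X_{j}\cup X_{k}$ is not normally embedded in the reduced pancake decomposition, one can choose $\gamma^{*}$ so that $\itord(\gamma,\gamma^{*})=\beta$ (applying Proposition \ref{Prop:CS are weakly NE}), hence $\gamma^{*}\notin N$ by Proposition \ref{Prop:nodal zones are finite}(1). A symmetric argument at $\gamma^{*}$, using its pancake $X_{k}$ and the distance to $X_{j}$ (which now plays the role of an ``other'' pancake at order $>\beta$ relative to $\gamma^{*}$), shows that $\gamma^{*}$ is itself nodal, so it lies in a nodal zone $N^{*}\ne N$. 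The inequality $\tord(N,N^{*})\ge\tord(\gamma,\gamma^{*})>\beta$ then yields $N^{*}\subset\mathcal{N}$, which is the desired contradiction.

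The main obstacle I expect is the multiplicity-jump claim, namely that the nodality of $\gamma$ forces some $\ord_{\gamma}f_{k}>\beta$ for $k\ne j$, together with the simultaneous choice of $\gamma^{*}\in X_{k}$ with $\itord(\gamma,\gamma^{*})=\beta$. Both rely on a careful use of the affine pizza structure of Proposition \ref{Prop:width function properties elementary triangle} and the correspondence between relative segments and nodal zones and the perfect/open-complete zone decompositions of $B_{\beta}(f_{k})$ and $H_{\beta}(f_{k})$ spelled out in Remark \ref{Rem: relative segments and nodal zones and perfect and open complete zones}.
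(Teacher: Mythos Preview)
Your overall plan matches the paper's: pick a nodal arc $\gamma$, produce an arc $\gamma^{*}$ with $\tord(\gamma,\gamma^{*})>\beta=\itord(\gamma,\gamma^{*})$, show $\gamma^{*}$ is nodal, and conclude $\Nod_{\gamma^{*}}\ne\Nod_{\gamma}$ lies in the same node. The existence step you worry about (some $k\ne j$ with $\ord_{\gamma}f_{k}>\beta$, and then $\itord(\gamma,\gamma^{*})=\beta$) is fine: once $\gamma\in G(X_{j})$, any $\gamma^{*}\in X_{k}$ with $k\ne j$ automatically has $\itord(\gamma,\gamma^{*})=\beta$, since the inner path must cross a boundary arc of $X_{j}$; Proposition~\ref{Prop:CS are weakly NE} is not really needed here.

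The genuine gap is in your ``symmetric argument'' for the nodality of $\gamma^{*}$. What you established for $\gamma$ is the implication \emph{nodal $\Rightarrow$ some $m_{k}(\gamma)=1$}. At $\gamma^{*}$ you only know the conclusion of that implication, namely $m_{j}(\gamma^{*})=1$ (equivalently $m(\gamma^{*})\ge 2$); this does \emph{not} yield that $\gamma^{*}$ is nodal, since segment arcs can perfectly well have multiplicity $\ge 2$. So the ``symmetry'' you invoke runs the implication in the wrong direction. The paper closes this gap by arguing the contrapositive: assuming $\gamma^{*}$ is a \emph{segment} arc, one takes a $\beta$-H\"older triangle $T'$ about $\gamma^{*}$ with $V(T')$ a constant zone, and uses the relative segment/nodal machinery (Remark~\ref{Rem: relative segments and nodal zones and perfect and open complete zones}) together with the outer-close correspondence $\tord(\gamma,\gamma^{*})>\beta$ to transport $T'$ to a $\beta$-H\"older triangle $T$ about $\gamma$ whose Valette link is again a constant zone. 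That forces $\gamma$ to be a segment arc, contradicting the hypothesis. This transport step---not the existence of $k$ or of $\gamma^{*}$---is the real obstacle, and your proposal does not address it.
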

\begin{proof}
	Let $X$ be a circular $\beta$-snake. Let $\gamma, \gamma' \in V(X)$ be arcs such that $\tord(\gamma,\gamma')>\itord(\gamma,\gamma')$. It is enough to prove that $\gamma$ is a nodal arc if and only if $\gamma'$ is a nodal arc (see Definition \ref{Def: node}). Suppose, by contradiction, $\gamma$ is a nodal arc and $\gamma'$ is a segment arc. 
	
	Let $\{X_k\}$ be a reduced pancake decomposition of $X$. By Remark \ref{Rem: adjacent pancakes} we can assume that $\gamma \in G(X_l)$ and $\gamma'\in G(X_{l+1})$ for some $l$. As $\gamma'$ is a segment arc, $\gamma'$ is a segment arc with respect to $X_l$ by Remark \ref{Rem: relative segments and nodal zones and perfect and open complete zones}. Then, there is $\beta$-H\"older triangle $T'\subset X_{l+1}$ such that $V(T')\subset G(X_{l+1})\cap H_{\beta}(f_l)$ is a constant zone with respect to $X_l$. But this implies that there exists a $\beta$-H\"older triangle $T\subset X_{l}$ such that $V(T)\subset G(X_{l})\cap H_{\beta}(f_{l+1})$ is a constant zone with respect to $X_{l+1}$, a contradiction with $\gamma$ being a nodal arc (see Remark \ref{Rem: relative segments and nodal zones and perfect and open complete zones}).
\end{proof}

The following Lemma is proved in \cite{GabrielovSouza} as Corollary 4.40 for $X$ being a $\beta$-snake. The proof for $X$ being a circular $\beta$-snake is analog and will be omitted.

\begin{Lem}\label{Lem:relative multiplicity pancake and segment}
	Let $X$ be a circular $\beta$-snake, $\{X_{k}\}_{k=1}^{p}$ a pancake decomposition of $X$, and $S\subset V(X)$ a segment.
	If $\gamma,\lambda \in S\cap G(X_{j})$ then $m_{l}(\gamma)=m_{l}(\lambda)$ for all $l$.
\end{Lem}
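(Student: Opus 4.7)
The plan is to show that for each $l$, $\ord_\gamma f_l > \beta$ if and only if $\ord_\lambda f_l > \beta$; by Definition \ref{Def: distance functions to pancakes}, this is equivalent to $m_l(\gamma)=m_l(\lambda)$. I split on the value of $\itord(\gamma,\lambda)$, which is $\ge\beta$ since both arcs lie in $X_j$.

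If $\itord(\gamma,\lambda)>\beta$, then because $X_j$ is normally embedded and the identity $d^{outer}\le d^X_{inner}\le d^{X_j}_{inner}=d^{outer}$ gives $\itord^X=\tord$ for arcs in $X_j$, we have $\tord(\gamma,\lambda)>\beta$. If $\gamma\in B_\beta(f_l)$ and $\lambda\in H_\beta(f_l)$ (or vice versa), Lemma \ref{Lem:itord>beta in H_{beta} and B_{beta} implies same ordf} would force $\tord(\gamma,\lambda)=\beta$, a contradiction; hence $\gamma,\lambda$ lie on the same side and $m_l(\gamma)=m_l(\lambda)$.

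For $\itord(\gamma,\lambda)=\beta$, I would exploit the cyclic structure of $V(X)$. By Theorem \ref{Teo:segment is the generic arcs from HT of nodal adjacent zones}, $V(X)$ decomposes cyclically into alternating segments and nodal zones, each appearing exactly once, and $V(X_j)$ is a contiguous interval of this cyclic order, so $S\cap G(X_j)$ is a single interval containing $\gamma$ and $\lambda$. Let $T^*=T(\gamma,\lambda)\subset X_j$ be the $\beta$-H\"older triangle whose arcs sweep through this interval, so that $V(T^*)\subset S$. Take a multipizza $\{T_i^*\}_{i=1}^q$ on $T^*$ associated with $\{f_1,\ldots,f_p\}$, with consecutive slices meeting at boundary arcs $\gamma=\mu_0,\mu_1,\ldots,\mu_q=\lambda$. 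By Lemma \ref{Lem:multipizza and generic arcs of multipizza triangle for CS}(1), $m_l$ is constant on each $G(T_i^*)$ with value $m_l^i$, and it suffices to prove $m_l^i=m_l^{i+1}$ for every internal $\mu_i$ and every $l$. When $\beta_i^*=\mu(T_i^*)>\beta$, every generic arc $\theta_i\in G(T_i^*)$ satisfies $\tord(\theta_i,\mu_i)\ge\beta_i^*>\beta$, hence $\itord(\theta_i,\mu_i)>\beta$ by the normal embedding of $X_j$, and Lemma \ref{Lem:itord > beta imples same multiplicity} yields $m_l^i=m_l(\mu_i)$; a symmetric argument on $T_{i+1}^*$ then gives $m_l^{i+1}=m_l(\mu_i)$. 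In the remaining case $\beta_i^*=\beta=\beta_{i+1}^*$, I would use that $\mu_i\in V(T^*)\subset S$ is a segment arc: by the perfect $\beta$-zone property of $S$ (Proposition \ref{Prop:segment of a CS is a perfect zone}), one constructs a $\beta$-H\"older triangle $\widetilde{T}\subset X_j$ inside $S\cap G(X_j)$ with $\mu_i$ as a generic arc, extending slightly into $G(T_i^*)$ and $G(T_{i+1}^*)$ on either side; applying Lemma \ref{Lem:multipizza and generic arcs of multipizza triangle for CS}(1) to $\widetilde{T}$ propagates the constancy of each $m_l$ across $\mu_i$ and forces $m_l^i=m_l^{i+1}$.

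The main obstacle is the case $\beta_i^*=\beta=\beta_{i+1}^*$: the direct inner-tangency argument fails because generic arcs of a $\beta$-H\"older-triangle slice have tangency order exactly $\beta$ with the slice boundary, so Lemma \ref{Lem:itord > beta imples same multiplicity} does not apply. To overcome this one must combine the cyclic structure from Theorem \ref{Teo:segment is the generic arcs from HT of nodal adjacent zones} (which ensures $V(T^*)\subset S$) with the perfect $\beta$-zone structure of $S$ (Proposition \ref{Prop:segment of a CS is a perfect zone}) to build the auxiliary $\beta$-H\"older triangle $\widetilde{T}\subset X_j$ through $\mu_i$ inside $S$, so that a common multipizza refinement transfers constancy of each $m_l$ across $\mu_i$ exactly as in the proof of Corollary 4.40 of \cite{GabrielovSouza}.
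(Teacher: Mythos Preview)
The paper omits this proof, saying only that it is analogous to Corollary~4.40 of \cite{GabrielovSouza}. Your overall strategy---take the H\"older triangle $T^*=T(\gamma,\lambda)\subset X_j$, show $V(T^*)\subset S$, and then run a multipizza chaining argument on $T^*$ exactly as in the snake case---is the intended ``analogous'' proof, and your treatment of the case $\itord(\gamma,\lambda)>\beta$ as well as the deferral to \cite{GabrielovSouza} for linking across $\beta$-exponent slice boundaries are both acceptable.

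There is, however, one genuine gap: your justification that $V(T^*)\subset S$. You argue that ``$V(X_j)$ is a contiguous interval of this cyclic order, so $S\cap G(X_j)$ is a single interval,'' but on a circle two arcs can meet in two components, so Theorem~\ref{Teo:segment is the generic arcs from HT of nodal adjacent zones} alone does not exclude the possibility that the path from $\gamma$ to $\lambda$ \emph{inside $X_j$} leaves $S$, traverses nodal zones and other segments, and re-enters $S$. This is precisely where the circular case differs from the snake case (where the path from $\gamma$ to $\lambda$ is unique and the inclusion is automatic). The missing argument is short: if $V(T^*)\not\subset S$ then, since $S$ is a zone and one of the two H\"older triangles from $\gamma$ to $\lambda$ must have Valette link in $S$, the complementary one does; hence every arc of $X$ outside $V(T^*)$ is a segment arc, so every nodal arc of $X$ lies in $V(T^*)\subset V(X_j)$. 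Now Proposition~\ref{Prop: nodes of a circular snakes} supplies two distinct nodal zones $N_1,N_2$ in a common node, and choosing $\nu_i\in N_i$ with $\tord(\nu_1,\nu_2)>\beta$ gives two arcs of $X_j$ with $\tord(\nu_1,\nu_2)>\beta=\itord(\nu_1,\nu_2)$ (the latter by Proposition~\ref{Prop:nodal zones are finite}(1)), contradicting the normal embedding of the pancake $X_j$. Once $V(T^*)\subset S$ is secured this way, the remainder of your argument goes through.
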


\begin{Lem}\label{Lem: nodal arcs for boundaries of HT of Def of Abn arc}
	Let $X$ be a circular $\beta$-snake and $\gamma \in V(X)$. There are nodal arcs $\lambda,\lambda' \in V(X)$ such that $T(\lambda,\gamma)$ and $T(\gamma,\lambda')$ are LNE $\beta$-H\"older triangles with $\tord(\lambda,\lambda')>\itord(\lambda,\lambda')$.
\end{Lem}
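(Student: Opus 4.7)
My plan is to first obtain candidate arcs $\lambda,\lambda'$ from the abnormality of $\gamma$, and then upgrade them to nodal arcs when necessary. Since $X$ is a circular $\beta$-snake, $V(X)=\Abn(X)$, so $\gamma$ is abnormal. I will apply Remark \ref{Rem:triangles of abnormal arc} to obtain normally embedded $\alpha$-H\"older triangles $T(\lambda,\gamma)\subset X$ and $T(\gamma,\lambda')\subset X$ meeting only at $\gamma$, with $\tord(\lambda,\lambda')>\itord(\lambda,\lambda')$ and $\tord(\lambda,\gamma)=\tord(\gamma,\lambda')=\itord(\lambda,\lambda')$. Proposition \ref{Prop:CS are weakly NE} then forces $\itord(\lambda,\lambda')=\beta$, and normal embedding of the two triangles gives $\alpha=\tord(\lambda,\gamma)=\beta$. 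Thus both triangles have exponent $\beta$.

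The remaining task is to arrange for $\lambda$ and $\lambda'$ to be nodal. Applying the argument used in the proof of Proposition \ref{Prop: nodes of a circular snakes} to the pair $(\lambda,\lambda')$, which satisfies $\tord>\itord=\beta$, one sees that $\lambda$ is nodal if and only if $\lambda'$ is nodal; hence if either of them is nodal we are done. In the remaining case both are segment arcs, say $\lambda\in S_\lambda$ and $\lambda'\in S_{\lambda'}$. My plan here is to extend each of the triangles through its outer boundary so as to absorb an adjacent nodal zone. By Theorem \ref{Teo:segment is the generic arcs from HT of nodal adjacent zones}(2), the segment $S_\lambda$ is adjacent to a nodal zone $N_\lambda$ on the side of $\lambda$ opposite to $\gamma$ in the cyclic order of the link. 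Using Remark \ref{Rem: adjacent pancakes} together with Lemma \ref{Lem:minimal pancake decomp of a snake}, I would arrange a reduced pancake decomposition so that $\lambda$ is a generic arc of a pancake $X_k$ whose closure meets $N_\lambda$, and then select a nodal $\tilde\lambda\in N_\lambda$ sufficiently close to $\lambda$ that $T(\tilde\lambda,\lambda)\subset X_k$ is a normally embedded $\beta$-H\"older triangle with $\tord(\tilde\lambda,\lambda)>\beta$. Gluing along $\lambda$ then produces a normally embedded $\beta$-H\"older triangle $T(\tilde\lambda,\gamma):=T(\tilde\lambda,\lambda)\cup T(\lambda,\gamma)$, and the non-archimedean property yields $\tord(\tilde\lambda,\lambda')\ge\min(\tord(\tilde\lambda,\lambda),\tord(\lambda,\lambda'))>\beta$. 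A symmetric construction on $\lambda'$ produces a nodal $\tilde\lambda'$ with the analogous properties.

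The main obstacle I expect is verifying that the enlarged triangle $T(\tilde\lambda,\gamma)$ remains normally embedded after gluing: in general, gluing two normally embedded H\"older triangles along a common boundary arc need not preserve normal embedding. The plan to overcome this is to keep both pieces inside a single normally embedded pancake $X_k$, inside which inner and outer tangency orders always coincide, thereby ruling out the appearance of new $\tord>\itord$ mismatches across the glued boundary $\lambda$. Realizing this reduction relies on the interplay between the pancake structure and the nodal zones/segments relative to a pancake developed in Subsection \ref{Subsection: seg and nodal zones with respect}, and is the technical heart of the argument.
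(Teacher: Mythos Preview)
Your first paragraph and the reduction (via Proposition \ref{Prop: nodes of a circular snakes}) to the case where both $\lambda,\lambda'$ are segment arcs are fine. The gap is the claim $\tord(\tilde\lambda,\lambda)>\beta$. By Proposition \ref{Prop:nodal zones are finite}(1) the nodal zone $\Nod_{\tilde\lambda}$ is $\beta$-complete: $\Nod_{\tilde\lambda}=\{\theta:\itord(\tilde\lambda,\theta)>\beta\}$. So $\itord(\tilde\lambda,\lambda)>\beta$ would force the segment arc $\lambda$ into $\Nod_{\tilde\lambda}$, a contradiction; hence $\itord(\tilde\lambda,\lambda)=\beta$, and since you keep $T(\tilde\lambda,\lambda)$ inside a single pancake, $\tord(\tilde\lambda,\lambda)=\beta$ as well. (Note too that ``normally embedded $\beta$-H\"older triangle with $\tord$ of its boundary arcs $>\beta$'' is already self-contradictory.) Your non-archimedean step then yields only $\tord(\tilde\lambda,\lambda')\ge\beta$, not $>\beta$, and the strict inequality $\tord>\itord$ is lost. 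The gluing issue you flag is real but secondary; the argument fails before you reach it.

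The paper goes in the opposite direction: instead of extending outward past $\lambda,\lambda'$, it finds nodal zones $N,N'$ \emph{inside} $T(\lambda,\lambda')$, adjacent to $\Seg_\lambda$ and $\Seg_{\lambda'}$ on the side toward the other endpoint. These exist because $\Seg_\lambda\ne\Seg_{\lambda'}$ (a single segment is normally embedded by Theorem \ref{Teo: segments are LNE} and so cannot contain both $\lambda$ and $\lambda'$). Replacing $\lambda,\lambda'$ by arcs of $N,N'$ \emph{shrinks} the two triangles to subtriangles of the original normally embedded ones, so normal embedding is automatic and no gluing is needed; $\tord(N,N')>\beta$ is then obtained from the relative-multiplicity machinery (Lemma \ref{Lem:relative multiplicity pancake and segment} and Remark \ref{Rem: relative segments and nodal zones and perfect and open complete zones}) rather than from non-archimedean estimates.
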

\begin{proof}
	Since $X$ is a circular $\beta$-snake, $\gamma$ is abnormal and, consequently, there are arcs $\lambda,\lambda' \in V(X)$ such that $T(\lambda,\gamma)$ and $T(\gamma,\lambda')$ are LNE $\beta$-H\"older triangles with $\tord(\lambda,\lambda')>\itord(\lambda,\lambda')$. 
	
	Let $\{X_k\}$ be a reduced pancake decomposition of $X$. By Remark \ref{Rem: adjacent pancakes} we can assume that $\lambda \in G(X_l)$ and $\lambda'\in G(X_{l+1})$ for some $l$.
	
	Suppose that $\lambda$ is a segment arc. Therefore, Lemma \ref{Lem:relative multiplicity pancake and segment} and Remark \ref{Rem: relative segments and nodal zones and perfect and open complete zones} imply that $\lambda$ is a segment arc if and only if $\lambda'$ is a segment arc. 
	
	If both $\lambda$ and $\lambda'$ are segment arcs, there are nodal zones $N$ and $N'$, adjacent to the segments $\Seg_{\lambda}$ and $\Seg_{\lambda'}$, respectively, such that $N, N'\subset V(T(\lambda,\lambda'))$ and $\tord(N,N')>\beta$. Hence, we can replace $\lambda$ and $\lambda'$ by arcs in $N$ and $N'$.	
\end{proof}

\begin{Cor}\label{Cor: nodal arc implies two nodes}
	If $X$ is a circular snake then the following conditions are equivalent:
	\begin{enumerate}
		\item $X$ contains a nodal arc;
		\item $X$ contains a node;
		\item $X$ contains two nodes.
	\end{enumerate}
\end{Cor}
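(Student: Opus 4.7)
The plan is to establish the cycle $(3) \Rightarrow (2) \Rightarrow (1) \Rightarrow (3)$. The first two implications are essentially immediate: $(3) \Rightarrow (2)$ is obvious, and $(2) \Rightarrow (1)$ follows at once from Definition \ref{Def: node}, since a node is a union of nodal zones (each of which contains nodal arcs by definition), together with Proposition \ref{Prop: nodes of a circular snakes} which guarantees that such zones actually exist. So the substance lies entirely in $(1) \Rightarrow (3)$.

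For this direction, I would start with a nodal arc $\gamma$ and use Lemma \ref{Lem: nodal arcs for boundaries of HT of Def of Abn arc} to produce nodal arcs $\lambda, \lambda' \in V(X)$ such that $T(\lambda, \gamma)$ and $T(\gamma, \lambda')$ are normally embedded $\beta$-H\"older triangles with $\tord(\lambda, \lambda') > \itord(\lambda, \lambda')$. Proposition \ref{Prop:CS are weakly NE} then forces $\itord(\lambda, \lambda') = \beta$, so $\tord(\lambda, \lambda') > \beta$. On the other hand, since the boundary arcs of a normally embedded $\beta$-H\"older triangle have inner (and therefore outer) tangency exactly $\beta$, we obtain $\tord(\lambda, \gamma) = \tord(\gamma, \lambda') = \beta$. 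An immediate consequence is that $\Nod_\lambda$ and $\Nod_{\lambda'}$ belong to a common node $\mathcal{N}_1$, since $\tord(\Nod_\lambda, \Nod_{\lambda'}) \ge \tord(\lambda, \lambda') > \beta$.

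The crux is then to show that $\Nod_\gamma$ lies in a different node. Take arbitrary $\gamma_0 \in \Nod_\gamma$ and $\lambda_0 \in \Nod_\lambda$. By Proposition \ref{Prop:nodal zones are finite}(1) one has $\itord(\gamma, \gamma_0) > \beta$ and $\itord(\lambda, \lambda_0) > \beta$, so the outer tangencies $\tord(\gamma, \gamma_0)$ and $\tord(\lambda, \lambda_0)$ are also strictly greater than $\beta$. Applying the non-archimedean property (Remark \ref{Rem: non-archimedean property}) to the triple $\{\gamma_0, \gamma, \lambda\}$, with the distinct values $\tord(\gamma, \gamma_0) > \beta$ and $\tord(\gamma, \lambda) = \beta$, yields $\tord(\gamma_0, \lambda) = \beta$; a second application to $\{\gamma_0, \lambda, \lambda_0\}$, with $\tord(\gamma_0, \lambda) = \beta$ and $\tord(\lambda, \lambda_0) > \beta$, then forces $\tord(\gamma_0, \lambda_0) = \beta$. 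Taking the supremum over $\gamma_0$ and $\lambda_0$ gives $\tord(\Nod_\gamma, \Nod_\lambda) = \beta$, so by Definition \ref{Def: node} these nodal zones lie in distinct nodes, and $\Nod_\gamma$ belongs to some node $\mathcal{N}_2 \ne \mathcal{N}_1$.

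The main obstacle I foresee is the careful organization of the non-archimedean cascade: each step pairs a tangency of exponent exactly $\beta$ (supplied by the normally embedded H\"older triangles from Lemma \ref{Lem: nodal arcs for boundaries of HT of Def of Abn arc}) with a strictly larger tangency (coming from the nodal condition via Proposition \ref{Prop:nodal zones are finite}), forcing the mixed tangency back down to $\beta$. Once this is in hand, the two nodes $\mathcal{N}_1 \ni \Nod_\lambda$ and $\mathcal{N}_2 \ni \Nod_\gamma$ supply the required pair, closing the cycle.
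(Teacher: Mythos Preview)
Your proof is correct and follows essentially the same approach as the paper: both establish the cycle $(3)\Rightarrow(2)\Rightarrow(1)\Rightarrow(3)$, and for the substantive implication $(1)\Rightarrow(3)$ both invoke Lemma~\ref{Lem: nodal arcs for boundaries of HT of Def of Abn arc} to produce nodal arcs $\lambda,\lambda'$ on the boundaries of normally embedded $\beta$-H\"older triangles through $\gamma$, and then argue that $\Nod_\lambda$ lies in a node different from the one containing $\Nod_\gamma$. The paper dispatches this last step in a single sentence (``As $T$ and $T'$ are normally embedded, those arcs must be in a node different from $\mathcal{N}$''), whereas you spell out the underlying non-archimedean cascade to pass from $\tord(\gamma,\lambda)=\beta$ to $\tord(\Nod_\gamma,\Nod_\lambda)=\beta$; your version is more explicit but the mathematical content is the same.
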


\begin{proof}
     Notice that $(3)$ implies $(2)$ and $(2)$ implies $(1)$ are immediate. Assume that $X$ contains a nodal arc $\gamma$. Let $\mathcal{N}$ be the node containing $\Nod_\gamma$. Since $X$ is a circular snake, $\gamma$ must be abnormal. Hence, there are LNE non-singular H\"older triangles $T$ and $T'$, contained in $X$, such that $T\cap T' = \gamma$ and $T\cup T'$ is not LNE. By Lemma \ref{Lem: nodal arcs for boundaries of HT of Def of Abn arc} we could assume that the boundary arcs of $T$ and $T'$, other than $\gamma$, are both nodal arcs. As $T$ and $T'$ are LNE, those arcs must be in a node different from $\mathcal{N}$. Therefore, $(1)$ implies $(3)$.

\end{proof}

\subsection{Canonical pancake decomposition}

\begin{Teo}\label{Teo: segments are LNE}
	Let $X$ be a circular $\beta$-snake with at least one nodal arc. Then, segments and nodal zones of $X$ are Lipschitz normally embedded. Moreover, given a segment $S$ of $X$, if $N$ and $N'$ are the nodal zones adjacent to $S$, then $N\cup S \cup N'$ is a normally embedded zone (see Definition \ref{Def: NE zone}).
\end{Teo}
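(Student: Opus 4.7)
The plan is to show that every pair of arcs in the zone $N\cup S\cup N'$ (and in each of $N$, $S$, $N'$ separately) satisfies $\tord=\itord$, and then to deduce normal embeddedness by building NE H\"older triangles inside the zone via Remark \ref{Rem: NE HT condition} and Definition \ref{Def: NE zone}. The central tool is Proposition \ref{Prop:CS are weakly NE}, which asserts that any pair of arcs in a circular $\beta$-snake with $\tord>\itord$ must have $\itord=\beta$.

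For a nodal zone $N$, the argument is immediate: open $\beta$-completeness (Proposition \ref{Prop:nodal zones are finite}) gives $\itord(\theta,\theta')>\beta$ for all $\theta,\theta'\in N$, forcing $\tord=\itord$. Since $N$ is a zone, any two of its arcs are joined by a non-singular H\"older triangle $T\subset N$; every pair of arcs in $T$ lies in $N$, hence satisfies $\tord=\itord$, so $T$ is NE. For a cross-zone pair $\theta\in N$, $\theta'\in S$ (and the symmetric $N'/S$ case), $\beta$-completeness of $N$ together with the zone $N\cup S\cup N'$ having order $\beta$ pins down $\itord(\theta,\theta')=\beta$; if $\tord>\itord$, the argument in the proof of Proposition \ref{Prop: nodes of a circular snakes} would force $\theta$ to be nodal iff $\theta'$ is nodal, contradicting $\theta'\in S$. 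Hence $\tord=\itord=\beta$ in these cases.

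The remaining, harder sub-cases are pairs with $\itord=\beta$ of the form $\theta,\theta'\in S$, and $\theta\in N$, $\theta'\in N'$. Here Proposition \ref{Prop:CS are weakly NE} is consistent with $\tord>\beta$, so we must rule it out by other means. I would fix a reduced pancake decomposition $\{X_k\}_{k=1}^{p}$ (each $X_k$ a normally embedded $\beta$-H\"older triangle by Lemma \ref{Lem:minimal pancake decomp of a snake}) and on each pancake a multipizza as in Lemma \ref{Lem:multipizza and generic arcs of multipizza triangle for CS}. Suppose for contradiction $\tord(\theta,\theta')>\beta$; by Remark \ref{Rem: adjacent pancakes} we may place $\theta\in G(X_l)$ and $\theta'\in G(X_{l+1})$. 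The constancy of the relative multiplicities $m_k$ on $S\cap G(X_l)$ (Corollary \ref{Cor:segments and nodal zones are constant zones} and Lemma \ref{Lem:relative multiplicity pancake and segment}), together with Propositions \ref{Prop:arcs in B_{beta} are generic} and \ref{Prop:maximal zones in H_{beta}}, places $\theta$ in a perfect $\beta$-zone of $H_\beta(f_{l+1})$, paired via Remark \ref{Rem: relative segments and nodal zones and perfect and open complete zones} with a corresponding perfect $\beta$-zone in $G(X_{l+1})$. Using the identity $S=G(T(\gamma,\gamma'))$ of Theorem \ref{Teo:segment is the generic arcs from HT of nodal adjacent zones}(3) to trace this pairing through a H\"older triangle $T(\theta,\theta')\subset N\cup S\cup N'$ should yield a contradiction: either an interior arc of $T(\theta,\theta')$ supposedly in $S$ is actually nodal, or the pairing of relative perfect zones across the pancake boundary forces $N$ and $N'$ into a common node.

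The principal obstacle is precisely this last step: showing that the nodal zones $N$ and $N'$ adjacent to a common segment $S$ cannot lie in the same node, or equivalently that no pair $\theta\in N$, $\theta'\in N'$ has $\tord(\theta,\theta')>\beta$. I expect the key technical ingredient to be a careful bookkeeping of the multipizza slices and the bijection they induce between relative perfect $\beta$-zones in adjacent pancakes, plausibly by induction on the number of pancakes crossed by the H\"older triangle joining the two arcs. Once $\tord=\itord$ is established throughout $N\cup S\cup N'$, a non-singular H\"older triangle between any two arcs inside the zone automatically satisfies $\tord=\itord$ for every pair of its arcs, hence is normally embedded, finishing the proof.
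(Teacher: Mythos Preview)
Your setup for nodal zones and for the mixed case $\theta\in N$, $\theta'\in S$ is correct and matches the paper. The gap is in the two cases you flag as hard: $\theta,\theta'\in S$ and $\theta\in N,\ \theta'\in N'$. Your proposed route---tracking multipizza slices and a bijection between relative perfect $\beta$-zones across adjacent pancakes, ``plausibly by induction on the number of pancakes crossed''---is not an argument yet; you yourself call it the principal obstacle and leave it open. The paper does not need any such bookkeeping or induction.

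The idea you are missing is to exploit the \emph{abnormality} of one of the two arcs directly. Take $\gamma\in S$ (or $\gamma\in N$) and its partner $\gamma'$ with $\tord(\gamma,\gamma')>\itord(\gamma,\gamma')=\beta$; since $X$ is a circular snake, $\gamma$ is abnormal, so there are normally embedded $\beta$-H\"older triangles $T_1=T(\lambda,\gamma)$ and $T_1'=T(\gamma,\lambda')$ with $T_1\cap T_1'=\gamma$ and $\tord(\lambda,\lambda')>\itord(\lambda,\lambda')$. Because the triangle $T=T(\gamma,\gamma')$ (with $V(T)\subset N\cup S\cup N'$) is not normally embedded, one of $\lambda,\lambda'$ must lie in $G(T)\subset S$; say $\lambda\in G(T)$. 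Now place the arcs in adjacent pancakes via Remark \ref{Rem: adjacent pancakes} and apply Lemma \ref{Lem:relative multiplicity pancake and segment}: since both $\gamma$ (or a nearby arc in $G(T_1)$) and $\lambda$ lie in $S\cap G(X_l)$, the relative multiplicity $m_{l\pm 1}$ is the same at both, so there is an arc $\theta$ in the adjacent pancake with $\tord(\theta,\lambda)>\beta$. But $\theta$ lies on the ``wrong side'' of $\gamma$ inside the normally embedded triangle $T_1$ (or $T_1'$), contradicting its normal embedding. The same scheme, with $S=G(T(\gamma,\gamma'))$ from Theorem \ref{Teo:segment is the generic arcs from HT of nodal adjacent zones}(3), handles the $N$--$N'$ case. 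This single local contradiction replaces your proposed global multipizza induction entirely.
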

\begin{proof}
	Note that item $(1)$ of Proposition \ref{Prop:nodal zones are finite} and Proposition \ref{Prop:CS are weakly NE} imply that nodal zones are LNE. To prove that segments are also LNE, it is enough to prove that there are no arcs $\gamma \in S$ and $\gamma' \in S\cup N$ such that $\tord(\gamma,\gamma')>\itord(\gamma,\gamma')$. 
	
	Suppose, by contradiction, that such arcs exist. Let $\{X_k\}$ be a reduced pancake decomposition of $X$. By Remark \ref{Rem: adjacent pancakes} we can assume that $\gamma \in G(X_l)$ and $\gamma'\in G(X_{l+1})$ for some $l$.
	
	Since $\gamma$ is a segment arc and $\tord(\gamma,\gamma')>\itord(\gamma,\gamma')$, Lemma \ref{Lem:relative multiplicity pancake and segment} implies that $\gamma'$ is a segment arc with respect to $X_k$. Remark \ref{Rem: relative segments and nodal zones and perfect and open complete zones} implies that $\gamma'\in S$.
	
	As $X$ is circular snake, $\gamma$ is abnormal. Then, there are arcs $\lambda,\lambda'\in V(X)$ such that $T_1=T(\lambda,\gamma)$ and $T'_1=T(\gamma,\lambda')$ are LNE $\beta$-H\"older triangles such that $\tord(\lambda,\lambda')>\itord(\lambda,\lambda')$.
	
	As the Vallete link $V(T) \subset S$ of the H\"older triangle $T=T(\gamma,\gamma')$ is not LNE, one of arcs $\lambda$, $\lambda'$ must belong to $G(T)$, say $\lambda \in G(T)$. Since $\{X_k\}$ is a reduced pancake decomposition, we can assume that $\lambda$ and $\lambda'$ are in adjacent pancakes.

	If $\lambda\subset X_{l+1}$ then enlarging once more the pancake $X_{l+1}$ we can assume that $\lambda \in G(X_{l+1})$. Since $\gamma',\lambda \in S\cap G(X_{l+1})$ and $\tord(\gamma,\gamma')>\itord(\gamma,\gamma')$, Lemma \ref{Lem:relative multiplicity pancake and segment} implies that there is an arc $\theta \in G(X_l)$ such that $\tord(\theta, \lambda)>\beta$, a contradiction with $T_1$ being LNE.

	If $\lambda\subset X_{l}$ then $\lambda'\subset X_{l-1}$. We can assume, without loss of generality, that $\lambda\in G(X_l)$ and $\lambda'\in G(X_{l-1})$. Then, as $\gamma,\lambda \in S\cap G(X_l)$ and $\tord(\lambda,\lambda')>\itord(\lambda,\lambda')$, Lemma \ref{Lem:relative multiplicity pancake and segment} implies that there is an arc $\theta' \in G(X_{l-1})$ such that $\tord(\theta', \lambda)>\beta$, a contradiction with $T'_1$ being LNE. 
	
	Finally, let us prove that $N\cup S\cup N'$ is a Lipschitz normally embedded zone. It only remains to be proved that there no arcs $\gamma \in N$ and $\gamma'\in N'$ such that $\tord(\gamma,\gamma')>\itord(\gamma,\gamma')$. 
	
	Consider the H\"older triangle $T = T(\gamma,\gamma')$ such that $S=G(T)$ (see Theorem \ref{Teo:segment is the generic arcs from HT of nodal adjacent zones}). As $X$ is circular snake, $\gamma$ is abnormal. Then, there are arcs $\lambda,\lambda'\in V(X)$ such that $T_1=T(\lambda,\gamma)$ and $T'_1=T(\gamma,\lambda')$ are LNE $\beta$-H\"older triangles such that $\tord(\lambda,\lambda')>\itord(\lambda,\lambda')$. Since $\{X_k\}$ is a reduced pancake decomposition, we can assume that $\lambda \in G(X_l)$ and $\lambda'\in G(X_{l+1})$. 
	
	As $T$ is not LNE, one of arcs $\lambda$, $\lambda'$ must belong to $G(T)$, say $\lambda \in G(T) = S$. Thus, since $\tord(\gamma,\gamma')>\itord(\gamma,\gamma')$, we can assume that $\lambda' \in G(X_{l-1})$ and $\lambda \in G(X_l)$. Then, using an arc $\theta \in  G(T_1)\cap G(X_l)$ instead of $\gamma$ (resp., $\theta' \in G(T'_1)\cap G(X_{l+1})$ instead of $\gamma'$) one obtains the analog contradictions with $T_1$ (resp., $T'_1$) being LNE as proved above, by Lemma \ref{Lem:relative multiplicity pancake and segment}.

\end{proof}

\begin{Cor}\label{Cor: intrinsic pancake decomp of a CS}
	The following decomposition of a circular snake $X$, with at least two nodes, into H\"older triangles determines a pancake decomposition of $X$: the boundary arcs of the H\"older triangles in the decomposition are chosen so that each nodal zone contains exactly one of them. The segments of $X$ are in one-to-one correspondence with the sets of generic arcs of its pancakes.
\end{Cor}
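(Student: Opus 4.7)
The plan is to exploit the cyclic decomposition of $V(X)$ established in Theorem \ref{Teo:segment is the generic arcs from HT of nodal adjacent zones} together with the normal embedding result of Theorem \ref{Teo: segments are LNE}. Since $X$ has at least two nodes, by Corollary \ref{Cor: nodal arc implies two nodes} it has nodal arcs, hence a nonempty collection of nodal zones. By Theorem \ref{Teo:segment is the generic arcs from HT of nodal adjacent zones}, the Valette link $V(X)$ is the disjoint union of finitely many segments and nodal zones, with each segment adjacent to exactly two nodal zones and each nodal zone adjacent to exactly two segments. Since the link of $X$ is homeomorphic to $S^{1}$ by Proposition \ref{Prop: link of a circular snake}, these zones arrange themselves cyclically, and I label them $N_{1}, S_{1}, N_{2}, S_{2}, \ldots, N_{p}, S_{p}$ so that $S_{i}$ is adjacent to $N_{i}$ and $N_{i+1}$ (indices taken mod $p$).

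For each $i$, I would choose an arc $\gamma_{i} \in N_{i}$ and set $T_{i} := T(\gamma_{i}, \gamma_{i+1})$, using item (3) of Theorem \ref{Teo:segment is the generic arcs from HT of nodal adjacent zones} to ensure that $G(T_{i}) = S_{i}$. Any non-generic interior arc of $T_{i}$ has inner tangency order strictly greater than $\beta$ with one of its boundary arcs, so by Proposition \ref{Prop:nodal zones are finite} it belongs to the unique open $\beta$-complete zone containing that boundary arc, namely $N_{i}$ or $N_{i+1}$. Therefore $V(T_{i}) \subset N_{i} \cup S_{i} \cup N_{i+1}$, and since this set is a normally embedded zone by Theorem \ref{Teo: segments are LNE}, each $T_{i}$ is a normally embedded $\beta$-H\"older triangle with connected link.

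It remains to check the two conditions of Definition \ref{Def: pancake decomposition}. For the covering $X = \bigcup_{i} T_{i}$: every arc of $X$ lies in some $S_{j}$ or some $N_{j}$; arcs in $S_{j}$ are generic arcs of $T_{j}$, while an arc in $N_{j}$ has inner (equivalently outer, by Theorem \ref{Teo: segments are LNE}) tangency order higher than $\beta$ with $\gamma_{j}$ and, by the $S^{1}$-topology of the link around $\gamma_{j}$, it must lie in $T_{j-1}$ or $T_{j}$. For the dimension bound, triangles $T_{i}$ and $T_{i+1}$ meet exactly along the boundary arc $\gamma_{i+1}$, and non-adjacent triangles meet only at the origin, so $\dim(T_{i}\cap T_{j}) \le 1 < 2 = \min(\dim T_{i}, \dim T_{j})$. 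The bijection between segments and sets of generic arcs of the pancakes is then immediate from $G(T_{i}) = S_{i}$ together with the fact that distinct indices $i$ produce distinct segments.

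The main delicate point is justifying the covering condition near the chosen arcs $\gamma_{i}$, i.e. that no arc of $N_{j}$ is left out of both $T_{j-1}$ and $T_{j}$; this is ultimately a topological statement reducing to the cyclic structure of the link at $\gamma_{j}$ together with the fact, ensured by Theorem \ref{Teo: segments are LNE}, that the union $N_{j-1} \cup S_{j-1} \cup N_{j} \cup S_{j} \cup N_{j+1}$ is normally embedded, so ``sides'' of $\gamma_{j}$ in $N_{j}$ can be unambiguously assigned to $T_{j-1}$ or $T_{j}$.
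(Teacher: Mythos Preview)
Your proposal is correct and follows essentially the same approach as the paper: the key step in both is invoking Theorem~\ref{Teo: segments are LNE} to conclude that each H\"older triangle $T_i$, whose Valette link sits inside $N_i\cup S_i\cup N_{i+1}$, is normally embedded. The paper's proof is a single sentence to this effect, while you have carefully verified the remaining conditions of Definition~\ref{Def: pancake decomposition} (covering, dimension of intersections) and the bijection $G(T_i)=S_i$; one small remark is that in your final paragraph you appeal to normal embedding of the five-zone union $N_{j-1}\cup S_{j-1}\cup N_j\cup S_j\cup N_{j+1}$, which Theorem~\ref{Teo: segments are LNE} does not state, but this is unnecessary since the covering near $\gamma_j$ already follows from the $S^1$-topology of the link.
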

\begin{proof}
	Theorem \ref{Teo: segments are LNE} implies that the chosen H\"older triangles in this decomposition are Lipschitz normally embedded. So they are pancakes.
\end{proof}

\begin{Def}\label{Def: intrinsic decomp}
	 A pancake decomposition of a circular snake $X$ as in Corollary \ref{Cor: intrinsic pancake decomp of a CS} is called an \em intrinsic pancake decomposition of \em $X$. 
\end{Def}

\section{Circular snakes versus snakes}\label{Section: CS vs Snakes}
In this section we study conditions to obtain a snake from a circular snake regarding the canonical decomposition of the Valette link of a circular snake into segments and nodal zones. This matter will be clearer after the next definition.

\begin{Def}\label{Def: HT assoc with an intrinsic decomp}
	 Let $X$ be a circular snake with at least two nodes (which is equivalent to contain a nodal arc, see Corollary \ref{Cor: nodal arc implies two nodes}) and let $\{X_k = T(\gamma_{k-1},\gamma_k)\}_{k=1}^p$ be an intrinsic pancake decomposition of $X$. We denote the segments and nodal zones of $X$ as $S_1, \ldots, S_p$ and $N_0,\ldots, N_p$, respectively, where $S_i=G(X_i)$ and $N_j = \Nod_{\gamma_j}$ (notice that $N_0 = N_p$). For each $k$ we define two H\"older triangles $X(k)$ and $X_{\alpha,k}$ associated with $(X, \{X_k\})$ as follows: 
        $$
        X(k) = X_1\cup \cdots \cup\hat{X}_k \cup\cdots \cup X_p = \bigcup_{j=1, j\ne k}^pX_j,
        $$
	
	and $X_{\alpha, k} = X\setminus \Int(T)$, where $T$ is an $\alpha$-H\"older triangle, with $\alpha > \beta$, such that $\gamma_k\in G(T)$. Roughly speaking, $X(k)$ is obtaining from $X$ by ``removing the segment $S_k$'', while $X_{\alpha,k}$ is obtained from $X$ by removing the interior of a H\"older triangle which Vallete link is contained in $N_k$.  
\end{Def}

\begin{remark}\label{Rem: convertion for consecutive pancakes}
	Let $X$ and $\{X_k\}$ be as in Definition \ref{Def: HT assoc with an intrinsic decomp}. We assume that $S_{j-1}=S_p$ if $j=1$ and $S_{j+1}=S_1$ if $j=p$. We shall also use the same convention for the nodal zones of $X$. 
\end{remark}

A natural question one can formulate is: what conditions can we impose over a circular $\beta$-snake $X$ to guarantee that $X(k)$ (respectively, $X_{\alpha,k}$) is a $\beta$-snake (see Definition \ref{Def:snake})? Both these questions will be addressed below.

\begin{center}

\tikzset{every picture/.style={line width=0.75pt}} 

\begin{tikzpicture}[x=0.75pt,y=0.75pt,yscale=-1,xscale=1]

\draw    (151.24,26.06) .. controls (129.05,26.82) and (110.09,37.25) .. (109.31,66.35) .. controls (108.53,95.45) and (137.52,106.29) .. (163.59,117.66) .. controls (189.66,129.02) and (163.59,156.98) .. (175.67,163.73) .. controls (187.75,170.49) and (199.83,174.49) .. (232.57,168.03) ;
\draw    (151.24,26.06) .. controls (178.41,26.81) and (205.96,38.81) .. (205.96,61.23) .. controls (205.96,83.66) and (168.46,85.8) .. (159.24,99.01) .. controls (150.03,112.22) and (175.14,124.51) .. (207.88,125.74) ;
\draw  [draw opacity=0][fill={rgb, 255:red, 155; green, 155; blue, 155 }  ,fill opacity=0.47 ] (153.6,116.96) .. controls (153.6,110.2) and (159.3,104.73) .. (166.33,104.73) .. controls (173.36,104.73) and (179.06,110.2) .. (179.06,116.96) .. controls (179.06,123.71) and (173.36,129.19) .. (166.33,129.19) .. controls (159.3,129.19) and (153.6,123.71) .. (153.6,116.96) -- cycle ;
\draw    (265.72,26.06) .. controls (287.91,26.82) and (306.87,37.25) .. (307.64,66.35) .. controls (308.42,95.45) and (283.43,104.14) .. (257.36,115.51) .. controls (231.3,126.87) and (253.87,162.51) .. (232.57,168.03) ;
\draw    (265.72,26.06) .. controls (238.54,26.81) and (209.84,38.12) .. (209.84,60.55) .. controls (209.84,82.97) and (248.49,85.8) .. (257.71,99.01) .. controls (266.93,112.22) and (240.62,124.51) .. (207.88,125.74) ;
\draw  [draw opacity=0][fill={rgb, 255:red, 155; green, 155; blue, 155 }  ,fill opacity=0.47 ] (194.93,60.41) .. controls (194.93,53.65) and (200.62,48.18) .. (207.65,48.18) .. controls (214.68,48.18) and (220.38,53.65) .. (220.38,60.41) .. controls (220.38,67.16) and (214.68,72.64) .. (207.65,72.64) .. controls (200.62,72.64) and (194.93,67.16) .. (194.93,60.41) -- cycle ;
\draw  [draw opacity=0][fill={rgb, 255:red, 155; green, 155; blue, 155 }  ,fill opacity=0.47 ] (241.02,116.65) .. controls (241.02,109.9) and (246.72,104.42) .. (253.75,104.42) .. controls (260.78,104.42) and (266.47,109.9) .. (266.47,116.65) .. controls (266.47,123.41) and (260.78,128.88) .. (253.75,128.88) .. controls (246.72,128.88) and (241.02,123.41) .. (241.02,116.65) -- cycle ;
\draw    (436.02,158.79) .. controls (466.64,159.13) and (441.42,134.06) .. (437.46,123.9) .. controls (433.5,113.74) and (443.58,94.44) .. (451.15,94.1) .. controls (458.71,93.76) and (468.44,113.74) .. (465.92,123.57) .. controls (463.39,133.39) and (437.46,159.13) .. (468.8,158.79) ;
\draw    (396.84,45.07) .. controls (380.71,69.54) and (416.56,62.19) .. (427.86,64.24) .. controls (439.16,66.3) and (451.53,84.39) .. (447.93,90.66) .. controls (444.33,96.92) and (421.11,94.43) .. (413.47,87.33) .. controls (405.83,80.23) and (395.78,46.05) .. (379.9,71.46) ;
\draw    (519.98,71.9) .. controls (505.49,46.54) and (494.35,79.41) .. (486.81,87.58) .. controls (479.27,95.76) and (456.42,96.79) .. (452.45,90.73) .. controls (448.47,84.67) and (462.37,67) .. (472.73,64.33) .. controls (483.1,61.65) and (519.6,70.55) .. (504.13,44.92) ;
\draw    (396.84,45.07) .. controls (409.36,29.42) and (492.93,28.74) .. (504.13,44.92) ;
\draw    (436.02,158.79) .. controls (415.37,156.22) and (370.41,88.58) .. (379.9,71.46) ;
\draw    (519.98,71.9) .. controls (528.09,89.94) and (489.31,157.8) .. (468.8,158.79) ;
\draw  [draw opacity=0][fill={rgb, 255:red, 155; green, 155; blue, 155 }  ,fill opacity=0.47 ] (380.37,62.51) .. controls (380.37,55.07) and (386.83,49.03) .. (394.8,49.03) .. controls (402.76,49.03) and (409.22,55.07) .. (409.22,62.51) .. controls (409.22,69.96) and (402.76,76) .. (394.8,76) .. controls (386.83,76) and (380.37,69.96) .. (380.37,62.51) -- cycle ;
\draw  [draw opacity=0][fill={rgb, 255:red, 155; green, 155; blue, 155 }  ,fill opacity=0.47 ] (435.74,91.95) .. controls (435.74,84.5) and (442.2,78.46) .. (450.16,78.46) .. controls (458.13,78.46) and (464.59,84.5) .. (464.59,91.95) .. controls (464.59,99.39) and (458.13,105.43) .. (450.16,105.43) .. controls (442.2,105.43) and (435.74,99.39) .. (435.74,91.95) -- cycle ;
\draw  [draw opacity=0][fill={rgb, 255:red, 155; green, 155; blue, 155 }  ,fill opacity=0.47 ] (491.22,61.66) .. controls (491.22,54.21) and (497.67,48.17) .. (505.64,48.17) .. controls (513.6,48.17) and (520.06,54.21) .. (520.06,61.66) .. controls (520.06,69.1) and (513.6,75.14) .. (505.64,75.14) .. controls (497.67,75.14) and (491.22,69.1) .. (491.22,61.66) -- cycle ;
\draw  [draw opacity=0][fill={rgb, 255:red, 155; green, 155; blue, 155 }  ,fill opacity=0.47 ] (437.51,152.26) .. controls (437.51,144.81) and (443.97,138.78) .. (451.93,138.78) .. controls (459.9,138.78) and (466.36,144.81) .. (466.36,152.26) .. controls (466.36,159.71) and (459.9,165.75) .. (451.93,165.75) .. controls (443.97,165.75) and (437.51,159.71) .. (437.51,152.26) -- cycle ;
\draw  [fill={rgb, 255:red, 0; green, 0; blue, 0 }  ,fill opacity=1 ] (108.16,65.28) .. controls (108.16,64.68) and (108.68,64.2) .. (109.31,64.2) .. controls (109.95,64.2) and (110.46,64.68) .. (110.46,65.28) .. controls (110.46,65.87) and (109.95,66.35) .. (109.31,66.35) .. controls (108.68,66.35) and (108.16,65.87) .. (108.16,65.28) -- cycle ;
\draw  [fill={rgb, 255:red, 0; green, 0; blue, 0 }  ,fill opacity=1 ] (164.5,118.78) .. controls (164.5,118.19) and (165.01,117.71) .. (165.65,117.71) .. controls (166.28,117.71) and (166.8,118.19) .. (166.8,118.78) .. controls (166.8,119.38) and (166.28,119.86) .. (165.65,119.86) .. controls (165.01,119.86) and (164.5,119.38) .. (164.5,118.78) -- cycle ;
\draw  [fill={rgb, 255:red, 0; green, 0; blue, 0 }  ,fill opacity=1 ] (208.69,60.55) .. controls (208.69,59.95) and (209.21,59.47) .. (209.84,59.47) .. controls (210.48,59.47) and (210.99,59.95) .. (210.99,60.55) .. controls (210.99,61.15) and (210.48,61.63) .. (209.84,61.63) .. controls (209.21,61.63) and (208.69,61.15) .. (208.69,60.55) -- cycle ;
\draw  [fill={rgb, 255:red, 0; green, 0; blue, 0 }  ,fill opacity=1 ] (392.38,57.5) .. controls (392.38,56.84) and (392.96,56.31) .. (393.68,56.31) .. controls (394.4,56.31) and (394.98,56.84) .. (394.98,57.5) .. controls (394.98,58.16) and (394.4,58.69) .. (393.68,58.69) .. controls (392.96,58.69) and (392.38,58.16) .. (392.38,57.5) -- cycle ;
\draw  [fill={rgb, 255:red, 0; green, 0; blue, 0 }  ,fill opacity=1 ] (417.01,90.69) .. controls (417.01,90.03) and (417.59,89.5) .. (418.31,89.5) .. controls (419.03,89.5) and (419.61,90.03) .. (419.61,90.69) .. controls (419.61,91.35) and (419.03,91.88) .. (418.31,91.88) .. controls (417.59,91.88) and (417.01,91.35) .. (417.01,90.69) -- cycle ;
\draw  [fill={rgb, 255:red, 0; green, 0; blue, 0 }  ,fill opacity=1 ] (446.63,89.47) .. controls (446.63,88.81) and (447.21,88.28) .. (447.93,88.28) .. controls (448.65,88.28) and (449.24,88.81) .. (449.24,89.47) .. controls (449.24,90.13) and (448.65,90.66) .. (447.93,90.66) .. controls (447.21,90.66) and (446.63,90.13) .. (446.63,89.47) -- cycle ;
\draw  [fill={rgb, 255:red, 0; green, 0; blue, 0 }  ,fill opacity=1 ] (451.14,89.54) .. controls (451.14,88.89) and (451.72,88.35) .. (452.44,88.35) .. controls (453.16,88.35) and (453.75,88.89) .. (453.75,89.54) .. controls (453.75,90.2) and (453.16,90.73) .. (452.44,90.73) .. controls (451.72,90.73) and (451.14,90.2) .. (451.14,89.54) -- cycle ;
\draw  [fill={rgb, 255:red, 0; green, 0; blue, 0 }  ,fill opacity=1 ] (506.12,56.31) .. controls (506.12,55.66) and (506.7,55.13) .. (507.42,55.13) .. controls (508.14,55.13) and (508.72,55.66) .. (508.72,56.31) .. controls (508.72,56.97) and (508.14,57.5) .. (507.42,57.5) .. controls (506.7,57.5) and (506.12,56.97) .. (506.12,56.31) -- cycle ;
\draw  [fill={rgb, 255:red, 0; green, 0; blue, 0 }  ,fill opacity=1 ] (485.51,87.58) .. controls (485.51,86.93) and (486.09,86.39) .. (486.81,86.39) .. controls (487.53,86.39) and (488.11,86.93) .. (488.11,87.58) .. controls (488.11,88.24) and (487.53,88.77) .. (486.81,88.77) .. controls (486.09,88.77) and (485.51,88.24) .. (485.51,87.58) -- cycle ;

\draw (6,179.83) node [anchor=north west][inner sep=0.75pt]   [align=left] {Figure 5: Examples of segments and nodal zones of circular snakes. Shaded disks represent arcs with the tangency\\order higher than each surface exponent.};
\draw (214.93,49.42) node [anchor=north west][inner sep=0.75pt]    {$\gamma $};
\draw (149.99,120.07) node [anchor=north west][inner sep=0.75pt]    {$\gamma '$};
\draw (95.09,51.1) node [anchor=north west][inner sep=0.75pt]    {$\tilde{\gamma }$};
\draw (508.14,41.43) node [anchor=north west][inner sep=0.75pt]    {$\theta '$};
\draw (381.5,39.59) node [anchor=north west][inner sep=0.75pt]    {$\theta $};
\draw (429.82,68.88) node [anchor=north west][inner sep=0.75pt]    {$\tilde{\theta }$};
\draw (456.65,69.8) node [anchor=north west][inner sep=0.75pt]    {$\tilde{\theta } '$};
\draw (404.1,91.19) node [anchor=north west][inner sep=0.75pt]    {$\theta ''$};
\draw (487.39,83.32) node [anchor=north west][inner sep=0.75pt]    {$\tilde{\theta } ''$};
\draw (87.16,20.69) node [anchor=north west][inner sep=0.75pt]    {$a)$};
\draw (354.97,19.93) node [anchor=north west][inner sep=0.75pt]    {$b)$};

\end{tikzpicture}
\end{center}

Notice that in Figure 5b the segment $S=G(T(\theta,\theta'))$ can be removed as in Definition \ref{Def: HT assoc with an intrinsic decomp}, since $X\setminus \Int(T(\theta,\theta'))$ is a snake, but $\tilde{S}=G(T(\theta,\tilde{\theta}))$ and $\tilde{S}'=G(T(\theta',\tilde{\theta}'))$ cannot be removed, since $X\setminus \Int(T(\theta,\tilde{\theta}))$ and $X\setminus \Int(T(\theta',\tilde{\theta}'))$ are not snakes (the generic arcs $\theta''$ and $\tilde{\theta}''$ of $X\setminus \Int(T(\theta,\tilde{\theta}))$ and $X\setminus \Int(T(\theta',\tilde{\theta}'))$, respectively,  would not be abnormal).

Also notice that if $X$ is a circular $\beta$-snake with link as in Figure 5a then any $\alpha$-H\"older triangle $T$, with $\alpha>\beta$, such that $\gamma \in G(T)$, satisfies that $X\setminus \Int(T)$ is a $\beta$-snake. However, if $T$ is an $\alpha$-H\"older triangle, with $\alpha>\beta$, such that $\gamma' \in G(T)$, then $X\setminus \Int(T)$ is not a $\beta$-snake, since $\tilde{\gamma}$ is not going to be an abnormal arc anymore.

\subsection{Removing segments}

\begin{Teo}\label{Prop: removing segments}
	Let $X$ and $\{X_k\}$ be as in Definition \ref{Def: HT assoc with an intrinsic decomp}. Then $X(k)$ is a snake if, and only if, $N_{k-2}\cap\Nor(X(k))=\emptyset=N_{k+1}\cap\Nor(X(k))$. In words, $X(k)$ is a snake if and only if the nodal zones $N_{k-2}$ and $N_{k+1}$ of $X$ do not contain normal arcs of $X(k)$.
\end{Teo}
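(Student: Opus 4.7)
I would prove both implications separately, after verifying that $X(k)=T(\gamma_{k-1},\gamma_k)$ is a non-singular $\beta$-H\"older triangle. The exponent $\beta$ comes from Lemma \ref{Lem:minimal pancake decomp of a snake} (each pancake $X_j$ with $j\neq k$ has exponent $\beta$) and the successive gluing along the nodal boundary arcs. Non-singularity uses Proposition \ref{Prop: link of a circular snake} (the circular snake $X$ has no Lipschitz singular arcs): the only Lipschitz singular arcs of $X(k)$ are its own boundary arcs $\gamma_{k-1},\gamma_k$, which is permitted by Definition \ref{DEF: non-singular Holder triangle}.

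For the necessity $(\Rightarrow)$, suppose $X(k)$ is a $\beta$-snake. By Remark \ref{Rem: abnormal arcs of a snake}, $\Nor(X(k))$ consists exactly of arcs with inner tangency order strictly greater than $\beta$ with one of the boundary arcs $\gamma_{k-1},\gamma_k$. For any $\gamma\in N_{k-2}$ we have $\itord(\gamma,\gamma_{k-2})>\beta$. Since $N_{k-2}$ and $N_{k-1}$ are distinct open $\beta$-complete zones of $X$ by Proposition \ref{Prop:nodal zones are finite}, Remark \ref{Rem:open complete zones} together with $\mu(X)=\beta$ forces $\itord(\gamma_{k-2},\gamma_{k-1})=\beta$ (indeed, every pair in $N_{k-2}\times N_{k-1}$ must have inner tangency order exactly $\beta$, or else $\beta$-completeness would merge the zones). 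The strict non-archimedean property of $\itord$ (Remark \ref{Rem: non-archimedean property}) applied to the unequal values $\itord(\gamma,\gamma_{k-2})>\beta$ and $\itord(\gamma_{k-2},\gamma_{k-1})=\beta$ yields $\itord(\gamma,\gamma_{k-1})=\beta$, and symmetrically $\itord(\gamma,\gamma_k)=\beta$. Thus $\gamma\notin \Nor(X(k))$. The same argument handles $N_{k+1}$.

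For the sufficiency $(\Leftarrow)$, fix $\gamma\in G(X(k))$ and show it is abnormal in $X(k)$. Since $X$ is a circular snake, $\gamma\in\Abn(X)$, and Lemma \ref{Lem: nodal arcs for boundaries of HT of Def of Abn arc} supplies nodal arcs $\lambda,\lambda'$ of $X$ in adjacent pancakes (Remark \ref{Rem: adjacent pancakes}) such that $T(\lambda,\gamma)$ and $T(\gamma,\lambda')$ are NE $\beta$-H\"older triangles with $\tord(\lambda,\lambda')>\itord(\lambda,\lambda')=\beta$. If both triangles are contained in $X(k)$, the witness transports directly and $\gamma\in\Abn(X(k))$. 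Otherwise one of them crosses $X_k$, and this is possible only when $\gamma\in X_{k-1}\cup X_{k+1}$. Treating the case $\gamma\in X_{k-1}$ (the other case is symmetric via $N_{k+1}$), the hypothesis $N_{k-2}\cap\Nor(X(k))=\emptyset$ means every $\mu\in N_{k-2}$ is abnormal in $X(k)$, so one obtains NE triangles $T(\theta,\mu),T(\mu,\theta')\subset X(k)$ with $\tord(\theta,\theta')>\itord(\theta,\theta')=\beta$; by Proposition \ref{Prop:CS are weakly NE}, $\theta,\theta'$ lie in a common node of $X$. By appropriate choice of $\mu\in N_{k-2}$ the triangle $T(\mu,\theta')$ extending toward $\gamma_{k-1}$ can be arranged to contain $\gamma$ as an interior arc inside $X_{k-2}\cup X_{k-1}\subset X(k)$; truncating at $\gamma$ gives an NE triangle $T(\gamma,\theta')\subset X(k)$, and the complementary NE triangle $T(\tilde{\lambda},\gamma)\subset X_{k-2}\cup X_{k-1}$ is then constructed by selecting a nodal arc $\tilde{\lambda}$ in the same node as $\theta'$ lying on the opposite side of $\gamma$ within $X(k)$, producing a witness $T(\tilde{\lambda},\gamma)\cup T(\gamma,\theta')$ of abnormality of $\gamma$ in $X(k)$.

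The main obstacle will be the rigorous execution of this truncation-and-complement construction: one must verify (a) that $T(\mu,\theta')$, or an enlargement obtained by varying $\mu\in N_{k-2}$, genuinely contains $\gamma$ and admits a clean truncation through $\gamma$ yielding a NE sub-triangle; and (b) that a nodal arc $\tilde{\lambda}$ in the node of $\theta'$ is available on the appropriate side of $\gamma$ inside $X(k)$, so that the pair $T(\tilde{\lambda},\gamma)\cup T(\gamma,\theta')$ is not normally embedded. Both steps rely on careful use of the non-archimedean property for $\itord$, the intrinsic pancake structure of $X$, the node structure (Definition \ref{Def: node}), Remark \ref{Rem: adjacent pancakes}, and an adaptation of Lemma \ref{Lem:relative multiplicity pancake and segment} to transport abnormality data between $X_{k-2}$ and $X_{k-1}$.
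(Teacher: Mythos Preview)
Your necessity argument is cleaner and more explicit than the paper's (which treats that direction as essentially obvious), and the overall architecture of your sufficiency argument---pull witnesses from $X$ via Lemma \ref{Lem: nodal arcs for boundaries of HT of Def of Abn arc} and repair them when they leak into $X_k$---is sound in spirit. However, the execution of the repair step has real gaps, some of which you flag yourself.

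First, the claim ``this is possible only when $\gamma\in X_{k-1}\cup X_{k+1}$'' is not justified. You invoke Remark \ref{Rem: adjacent pancakes} to place $\lambda,\lambda'$ in adjacent pancakes, but that remark works by \emph{modifying} the pancake decomposition, whereas here you are committed to the fixed intrinsic decomposition; and even granting adjacency, nothing forces $\gamma$ itself to lie in $X_{k-1}\cup X_{k+1}$. Second, the truncation-and-complement construction is, as you note, only sketched: you have not shown that $T(\mu,\theta')$ can be enlarged to contain $\gamma$ while remaining normally embedded, nor that a suitable $\tilde\lambda$ exists on the correct side.

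The paper sidesteps both difficulties with a single preparatory move. From the hypothesis on $N_{k-2}$ it extracts, for one fixed $\gamma\in N_{k-2}$, nodal arcs $\gamma_-,\gamma_+$ witnessing abnormality in $X(k)$, and then uses Theorem \ref{Teo: segments are LNE} (normal embeddedness of $N\cup S\cup N'$) to pin down $\gamma_+\in N_{k-1}$; symmetrically one gets $\lambda_-\in N_k$ from the hypothesis on $N_{k+1}$. These four arcs $\gamma_-,\gamma_+,\lambda_-,\lambda_+$ act as ``guards'' at the two ends of $X(k)$: for an arbitrary generic $\theta\in X(k)$, the paper does a short case analysis on the position of $\theta$ relative to them, and in the main case ($\theta\in G(T(\gamma_-,\lambda_+))$) the guards force the $X$-witnesses $T_-\cup T_+$ to lie entirely in $X(k)$, with no truncation needed. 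The use of Theorem \ref{Teo: segments are LNE} to locate $\gamma_+$ in $N_{k-1}$ (and $\lambda_-$ in $N_k$) is precisely the missing idea in your argument; once you have it, the case-by-case repair you attempt becomes unnecessary.
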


\begin{proof}
	Since $N_{k-2}\cap\Nor(X(k))=\emptyset$, given an arc $\gamma \in N_{k-2}$ there are arcs $\gamma_{-} \subset T(\gamma_k,\gamma)\subset X(k)$ and $\gamma_{+}\subset T(\gamma,\gamma_{k-1})\subset X(k)$ such that the H\"older triangles $T(\gamma_{-},\gamma)$ and $T(\gamma,\gamma_{+})$, contained in $X(k)$, are LNE and $\tord(\gamma_{-},\gamma_{+})>\itord(\gamma_{-},\gamma_{+})$. By Lemma \ref{Lem: nodal arcs for boundaries of HT of Def of Abn arc}, we can assume that $\gamma_{-}$ and $\gamma_{+}$ are nodal arcs. In particular, as $T(\gamma,\gamma_{+})$ is LNE, Theorem \ref{Teo: segments are LNE} implies that $\gamma_{+}\in N_{k-1}\cap V(X(k))$.
	
	Similarly, as $N_{k+1}\cap\Nor(X(k))=\emptyset$, given an arc $\lambda\in N_{k+1}$ there are nodal arcs $\lambda_{-}\subset T(\gamma_k,\lambda)\subset X(k)$ and $\lambda_{+}\subset T(\lambda,\gamma_{k-1})\subset X(k)$ such that the H\"older triangles $T(\lambda_{-},\lambda)$ and $T(\lambda,\lambda_{+})$, contained in $X(k)$, are LNE and $\tord(\lambda_{-},\lambda_{+})>\itord(\lambda_{-},\lambda_{+})$. Moreover, since $T(\lambda_{-},\lambda)$ is LNE, Theorem \ref{Teo: segments are LNE} implies that $\lambda_{-} \in N_{k}\cap V(X(k))$.
	
	We claim that any generic arc of $X(k)$ is abnormal. Consider $\theta \in G(X(k))$. As $\theta \subset X$ and $X$ is a circular snake, $\theta$ is abnormal in $X$. Thus, by Lemma \ref{Lem: nodal arcs for boundaries of HT of Def of Abn arc}, there are arcs $\theta_{-},\theta_{+}\in V(X)$ such that $T(\theta_{-},\theta)$ and $T(\theta,\theta_{+})$ are LNE and $\tord(\theta_{-},\theta_{+})>\itord(\theta_{-},\theta_{+})$.
	
	If $\theta \in G(T(\gamma_{-},\gamma_{+}))\cup G(T(\lambda_{-},\lambda_{+}))$, we have $\theta$ trivially abnormal. If $\tord(\theta,\gamma_{-})>\mu(X)$ (the case $\tord(\theta,\lambda_{+})>\mu(X)$ is analog) then, since $\gamma_{-}$ is an abnormal arc of $X$, there are LNE $\mu(X)$-H\"older triangles $T_{-}$ and $T_{+}$, where $T_{-}\cap T_{+} = \gamma_{-}$, such that $T_{-}\cup T^{+}$ is not LNE. Then, as $\tord(\gamma_{-},\gamma_{+})>\itord(\gamma_{-},\gamma_{+})$ and $\tord(\lambda_{-},\lambda_{+})>\itord(\lambda_{-},\lambda_{+})$, we have $T_{-}\cup T^{+} \subset X(k)$. Hence, $\gamma_{-}$ is an abnormal arc of $X(k)$ and, since $\tord(\theta,\gamma_{-})>\mu(X)$, it follows that $\theta$ is also an abnormal arc of $X(k)$. 
	
	Finally, suppose that $\theta \in G(T(\gamma_{-},\lambda_{+}))$, where $T(\gamma_{-},\lambda_{+}) \subset X(k)$. Since $X$ is a circular snake, $\theta$ is an abnormal arc of $X$. So, there are LNE $\mu(X)$-H\"older triangles $T_{-}$ and $T_{+}$, where $T_{-}\cap T_{+} = \theta$, such that $T_{-}\cup T^{+}$ is not LNE. As $\tord(\gamma_{-},\gamma_{+})>\itord(\gamma_{-},\gamma_{+})$ and $\tord(\lambda_{-},\lambda_{+})>\itord(\lambda_{-},\lambda_{+})$, we have $T_{-}\cup T^{+} \subset X(k)$. This finishes the proof.
\end{proof}

\begin{remark}\label{Rem: removing segments}
	 Besides the criterion given in Theorem \ref{Prop: removing segments}, there are circular snakes such that $X(k)$ is not a snake for every $k$. For example, let $X$ be a circular snake with link as in Figure 6a and let $\{X_k\}$ be an intrinsic pancake decomposition of $X$ such that $S_k = G(X_k) = \Seg_{\theta_k}$. Then, $X(k)$ is not a snake for any $k\in \{1,2,3,4\}$, since for $j\in \{2,3\}\setminus \{k\}$, $\theta_j$ is a generic normal arc of $X(k)$. Furthermore, for any $k\in \{5,6,7,8\}$, $X(k)$ is not a snake as well, since for $j\in \{6,7\}\setminus \{k\}$, $\theta_j$ is a generic normal arc of $X(k)$.
\end{remark}

\subsection{Removing H\"older triangles inside nodal zones}

\begin{Prop}\label{Prop: removing HT inside nodal zones}
Let $X$ and $\{X_k\}$ be as in Definition \ref{Def: HT assoc with an intrinsic decomp}. Then, $X_{\alpha, k}$ is a $\beta$-snake if, and only if, both $S_k$ and $S_{k+1}$ are subsets of $\Abn(X_{\alpha, k})$.

\end{Prop}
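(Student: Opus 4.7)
The plan is to first identify the generic arcs of $X_{\alpha,k}$ and then establish the equivalence by transporting abnormality witnesses from $X$ to $X_{\alpha,k}$, with the stated condition on $S_k$ and $S_{k+1}$ being the only nontrivial obstruction.

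First I would note that $X_{\alpha,k}$ is a $\beta$-H\"older triangle whose boundary arcs $\gamma,\gamma'\in G(T)\subset N_k$ satisfy $\itord(\gamma,\gamma_k)=\itord(\gamma',\gamma_k)=\alpha>\beta$. Applying the non-archimedean property with $\gamma_k$ and using that each $N_j$ is open $\beta$-complete (Proposition \ref{Prop:nodal zones are finite}), one obtains $\itord(\theta,\gamma)=\itord(\theta,\gamma')=\beta$ for every $\theta\in S_j$ (any $j$) and every $\theta\in N_j$ with $j\ne k$, while any $\theta\in N_k\cap V(X_{\alpha,k})$ satisfies both $\itord(\theta,\gamma)>\beta$ and $\itord(\theta,\gamma')>\beta$. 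Therefore
\[
G(X_{\alpha,k}) \;=\; \bigcup_{j=1}^p S_j \;\cup\; \bigcup_{j\ne k} N_j,
\]
and all non-generic interior arcs of $X_{\alpha,k}$ lie in $N_k\cap V(X_{\alpha,k})$. The forward direction is now immediate: if $X_{\alpha,k}$ is a $\beta$-snake then $\Abn(X_{\alpha,k})=G(X_{\alpha,k})\supset S_k\cup S_{k+1}$.

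For the backward direction, assume $S_k\cup S_{k+1}\subset\Abn(X_{\alpha,k})$; I must verify $\Abn(X_{\alpha,k})=G(X_{\alpha,k})$. For $\Abn(X_{\alpha,k})\subset G(X_{\alpha,k})$, the point is that removing $\Int(T)$ disconnects $N_k$ into two pieces (one containing $\gamma$, the other $\gamma'$), so any $\theta\in N_k\cap V(X_{\alpha,k})$ has access to only one side of the cut in $X_{\alpha,k}$; hence every pair of normally embedded H\"older triangles $T_-,T_+\subset X_{\alpha,k}$ meeting only at $\theta$ is contained in a single normally embedded sub-zone of $X_{\alpha,k}$ (inherited as a subset of either $N_{k-1}\cup S_k\cup N_k$ or $N_k\cup S_{k+1}\cup N_{k+1}$, both normally embedded by Theorem \ref{Teo: segments are LNE}), making $T_-\cup T_+$ normally embedded, so $\theta$ is not abnormal. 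For $G(X_{\alpha,k})\subset\Abn(X_{\alpha,k})$, let $\theta\in G(X_{\alpha,k})$: if $\theta\in S_k\cup S_{k+1}$ the hypothesis applies directly. Otherwise, since $X$ is a circular snake Lemma \ref{Lem: nodal arcs for boundaries of HT of Def of Abn arc} gives nodal arcs $\lambda,\lambda'$ such that $T(\lambda,\theta),T(\theta,\lambda')$ are normally embedded $\beta$-H\"older triangles and $\tord(\lambda,\lambda')>\itord(\lambda,\lambda')=\beta$, and these witnesses lie in maximal normally embedded zones of $X$ of the form $N_a\cup S_b\cup N_c$ (Theorem \ref{Teo: segments are LNE}). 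When $\theta\in S_j$ with $j\notin\{k,k+1\}$, or $\theta\in N_j$ with $j\notin\{k-1,k,k+1\}$, the indices $a,c$ avoid $k$, so the witness triangles lie in $X\setminus N_k\subset X_{\alpha,k}$.

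The remaining case $\theta\in N_{k-1}$ (with $\theta\in N_{k+1}$ symmetric) is the main obstacle: one witness triangle, say $T(\theta,\lambda')\subset N_{k-1}\cup S_k\cup N_k$, is forced to have $\lambda'\in N_k$ and so does not \emph{a priori} lie in $X_{\alpha,k}$. I resolve this by replacing $\lambda'$ by the boundary arc $\gamma\in N_k$ of $X_{\alpha,k}$: since $\lambda',\gamma\in N_k$ and $N_k$ is normally embedded, $\tord(\lambda',\gamma)=\itord(\lambda',\gamma)>\beta$, so the non-archimedean property gives
\[
\tord(\lambda,\gamma)\;\ge\;\min\bigl(\tord(\lambda,\lambda'),\tord(\lambda',\gamma)\bigr)\;>\;\beta\;=\;\itord(\lambda,\gamma),
\]
preserving the strict tangency inequality needed for abnormality; and the replacement triangle $T(\theta,\gamma)$ can be chosen as a sub-H\"older-triangle of the normally embedded zone $N_{k-1}\cup S_k\cup N_k$ lying entirely inside $X_{\alpha,k}$, since $\gamma$ is itself a boundary arc of $X_{\alpha,k}$ and the triangle approaches $\gamma$ from the $S_k$-side, thereby avoiding $\Int(T)$. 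Combined with the previous inclusion this yields $\Abn(X_{\alpha,k})=G(X_{\alpha,k})$, so $X_{\alpha,k}$ is a $\beta$-snake.
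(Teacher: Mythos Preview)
Your forward direction and identification of $G(X_{\alpha,k})$ are correct, and your backward strategy---pull abnormality witnesses from $X$ via Lemma~\ref{Lem: nodal arcs for boundaries of HT of Def of Abn arc} and transport them into $X_{\alpha,k}$---is exactly what the paper does (it simply says ``proceed as in the proof of Theorem~\ref{Prop: removing segments}''). Your explicit handling of the boundary case $\theta\in N_{k\pm 1}$, where you replace the nodal endpoint $\lambda'\in N_k$ by the boundary arc $\gamma$ of $X_{\alpha,k}$ using the non-archimedean inequality, is a clean move that the paper leaves implicit.

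There is, however, a real gap in a claim you rely on twice: the assertion that a normally embedded witness triangle must lie inside a single zone of the form $N_a\cup S_b\cup N_c$. Theorem~\ref{Teo: segments are LNE} only says each such zone \emph{is} normally embedded; it does not say these are the \emph{maximal} normally embedded zones, and in general they are not---the intrinsic pancake decomposition of Corollary~\ref{Cor: intrinsic pancake decomp of a CS} need not be reduced. Concretely, if $N_{k-1},N_k,N_{k+1}$ lie in three different nodes and no arcs of $S_k$ and $S_{k+1}$ are outer-close, then $T(\gamma_{k-1},\gamma_{k+1})$ is a normally embedded $\beta$-H\"older triangle containing all of $N_k$. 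So a Lemma~\ref{Lem: nodal arcs for boundaries of HT of Def of Abn arc} witness for some $\theta\in S_j$ with $j\notin\{k,k+1\}$ may well pass through $N_k$ and hence through $\Int(T)$, invalidating your ``the indices $a,c$ avoid $k$'' step; and for $\theta\in N_k\cap V(X_{\alpha,k})$ the triangle $T_-$ on the segment side can extend past $N_{k-1}$. Both conclusions you draw are in fact correct, but they need different justifications: for the far case, apply your own endpoint-replacement trick (swap the nodal endpoint landing in $N_k$ for $\gamma$ or $\gamma'$) rather than the containment claim; for $\Abn(X_{\alpha,k})\subset G(X_{\alpha,k})$, observe that the triangle $T_+$ on the cut side is trapped in $N_k$ and so has exponent strictly greater than $\beta$, and then the non-archimedean property with $\theta$ as pivot (using that $T_-$ is normally embedded) forces $\tord(\eta,\eta')=\beta=\itord(\eta,\eta')$ for any $\eta\in T_-\setminus N_k$ and $\eta'\in T_+$.
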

\begin{proof}
	If $X_{\alpha, k}$ is a snake then $\Abn(X_{\alpha, k})=G(X_{\alpha, k})$ and $S_k, S_{k+1} \subset G(X_{\alpha, k})$. Therefore, it follows that $S_k$ and $S_{k+1}$ are subsets of $\Abn(X_{\alpha, k})$.
	
	Suppose that $S_k$ and $S_{k+1}$ are subsets of $\Abn(X_{\alpha, k})$. We proceed as in the proof of Theorem \ref{Prop: removing segments} to conclude that any generic arc of $X_{\alpha, k}$ is abnormal.
\end{proof}

\begin{remark}\label{Rem: removing piece of a nodal zone}
	 There are circular snakes $X$ such that $X_{\alpha, k}$ as in Proposition \ref{Prop: removing HT inside nodal zones} is never a snake. For example, let $X$ be a circular snake with link as in Figure 6b with $\{X_k = T(\lambda_{k-1},\lambda_k)\}$ being an intrinsic pancake decomposition of $X$. Note that $\lambda_2$ is not an abnormal arc of $X_{\alpha,1}$ and $\lambda_1$ is not an abnormal arc of $X_{\alpha,2}$. Moreover, $\lambda_3$, $\lambda_8$, $\lambda_4$ and $\lambda_7$ are not abnormal arcs of $X_{\alpha,4}$, $X_{\alpha,7}$, $X_{\alpha,3}$ and $X_{\alpha,8}$, respectively. Finally, any arc in $G(X_6 = T(\lambda_5,\lambda_6))$ is not an abnormal arc of $X_{\alpha,5}$ or $X_{\alpha,5}$.
\end{remark}

\begin{center}

\tikzset{every picture/.style={line width=0.75pt}} 

\begin{tikzpicture}[x=0.75pt,y=0.75pt,yscale=-1,xscale=1]

\draw    (180.32,179.2) .. controls (140.32,178.2) and (117.92,202.4) .. (90.32,189.2) .. controls (62.72,176) and (66.72,153.6) .. (84.32,142.8) .. controls (101.92,132) and (120.72,141.6) .. (135.52,127.2) .. controls (150.32,112.8) and (96.72,88) .. (111.52,68.8) .. controls (126.32,49.6) and (178.29,69.42) .. (178.84,81.05) .. controls (179.38,92.69) and (172,92.3) .. (147,118.3) .. controls (122,144.3) and (143.12,175.2) .. (181.12,175.2) ;
\draw    (180.32,179.2) .. controls (220.32,178.2) and (244.32,202.4) .. (271.92,189.2) .. controls (299.52,176) and (295.52,153.6) .. (277.92,142.8) .. controls (260.32,132) and (241.52,141.6) .. (226.72,127.2) .. controls (211.92,112.8) and (265.52,88) .. (250.72,68.8) .. controls (235.92,49.6) and (181.93,69.05) .. (182.11,81.24) .. controls (182.29,93.42) and (190.24,92.3) .. (215.24,118.3) .. controls (240.24,144.3) and (219.12,175.2) .. (181.12,175.2) ;
\draw  [draw opacity=0][fill={rgb, 255:red, 155; green, 155; blue, 155 }  ,fill opacity=0.47 ] (124.83,123.98) .. controls (124.83,115.71) and (131.57,109) .. (139.89,109) .. controls (148.21,109) and (154.96,115.71) .. (154.96,123.98) .. controls (154.96,132.25) and (148.21,138.96) .. (139.89,138.96) .. controls (131.57,138.96) and (124.83,132.25) .. (124.83,123.98) -- cycle ;
\draw  [draw opacity=0][fill={rgb, 255:red, 155; green, 155; blue, 155 }  ,fill opacity=0.47 ] (166.54,177.12) .. controls (166.54,168.85) and (173.29,162.14) .. (181.61,162.14) .. controls (189.93,162.14) and (196.67,168.85) .. (196.67,177.12) .. controls (196.67,185.4) and (189.93,192.1) .. (181.61,192.1) .. controls (173.29,192.1) and (166.54,185.4) .. (166.54,177.12) -- cycle ;
\draw  [draw opacity=0][fill={rgb, 255:red, 155; green, 155; blue, 155 }  ,fill opacity=0.47 ] (207.4,125.69) .. controls (207.4,117.42) and (214.14,110.71) .. (222.47,110.71) .. controls (230.79,110.71) and (237.53,117.42) .. (237.53,125.69) .. controls (237.53,133.97) and (230.79,140.67) .. (222.47,140.67) .. controls (214.14,140.67) and (207.4,133.97) .. (207.4,125.69) -- cycle ;
\draw  [draw opacity=0][fill={rgb, 255:red, 155; green, 155; blue, 155 }  ,fill opacity=0.47 ] (165.4,83.69) .. controls (165.4,75.42) and (172.14,68.71) .. (180.47,68.71) .. controls (188.79,68.71) and (195.53,75.42) .. (195.53,83.69) .. controls (195.53,91.97) and (188.79,98.67) .. (180.47,98.67) .. controls (172.14,98.67) and (165.4,91.97) .. (165.4,83.69) -- cycle ;
\draw  [fill={rgb, 255:red, 0; green, 0; blue, 0 }  ,fill opacity=1 ] (159.39,104.91) .. controls (159.39,104.06) and (160.12,103.36) .. (161.03,103.36) .. controls (161.93,103.36) and (162.67,104.06) .. (162.67,104.91) .. controls (162.67,105.77) and (161.93,106.46) .. (161.03,106.46) .. controls (160.12,106.46) and (159.39,105.77) .. (159.39,104.91) -- cycle ;
\draw  [fill={rgb, 255:red, 0; green, 0; blue, 0 }  ,fill opacity=1 ] (216.72,160.58) .. controls (216.72,159.72) and (217.45,159.03) .. (218.36,159.03) .. controls (219.27,159.03) and (220,159.72) .. (220,160.58) .. controls (220,161.43) and (219.27,162.13) .. (218.36,162.13) .. controls (217.45,162.13) and (216.72,161.43) .. (216.72,160.58) -- cycle ;
\draw  [fill={rgb, 255:red, 0; green, 0; blue, 0 }  ,fill opacity=1 ] (141.39,160.25) .. controls (141.39,159.39) and (142.12,158.7) .. (143.03,158.7) .. controls (143.93,158.7) and (144.67,159.39) .. (144.67,160.25) .. controls (144.67,161.1) and (143.93,161.79) .. (143.03,161.79) .. controls (142.12,161.79) and (141.39,161.1) .. (141.39,160.25) -- cycle ;
\draw  [fill={rgb, 255:red, 0; green, 0; blue, 0 }  ,fill opacity=1 ] (200.39,106.58) .. controls (200.39,105.72) and (201.12,105.03) .. (202.03,105.03) .. controls (202.93,105.03) and (203.67,105.72) .. (203.67,106.58) .. controls (203.67,107.43) and (202.93,108.13) .. (202.03,108.13) .. controls (201.12,108.13) and (200.39,107.43) .. (200.39,106.58) -- cycle ;
\draw  [fill={rgb, 255:red, 0; green, 0; blue, 0 }  ,fill opacity=1 ] (245.05,64.91) .. controls (245.05,64.06) and (245.79,63.36) .. (246.69,63.36) .. controls (247.6,63.36) and (248.33,64.06) .. (248.33,64.91) .. controls (248.33,65.77) and (247.6,66.46) .. (246.69,66.46) .. controls (245.79,66.46) and (245.05,65.77) .. (245.05,64.91) -- cycle ;
\draw  [fill={rgb, 255:red, 0; green, 0; blue, 0 }  ,fill opacity=1 ] (285.72,176.58) .. controls (285.72,175.72) and (286.45,175.03) .. (287.36,175.03) .. controls (288.27,175.03) and (289,175.72) .. (289,176.58) .. controls (289,177.43) and (288.27,178.13) .. (287.36,178.13) .. controls (286.45,178.13) and (285.72,177.43) .. (285.72,176.58) -- cycle ;
\draw  [fill={rgb, 255:red, 0; green, 0; blue, 0 }  ,fill opacity=1 ] (75.39,179.58) .. controls (75.39,178.72) and (76.12,178.03) .. (77.03,178.03) .. controls (77.93,178.03) and (78.67,178.72) .. (78.67,179.58) .. controls (78.67,180.43) and (77.93,181.13) .. (77.03,181.13) .. controls (76.12,181.13) and (75.39,180.43) .. (75.39,179.58) -- cycle ;
\draw  [fill={rgb, 255:red, 0; green, 0; blue, 0 }  ,fill opacity=1 ] (113.19,65.47) .. controls (113.19,64.61) and (113.92,63.92) .. (114.83,63.92) .. controls (115.73,63.92) and (116.47,64.61) .. (116.47,65.47) .. controls (116.47,66.32) and (115.73,67.01) .. (114.83,67.01) .. controls (113.92,67.01) and (113.19,66.32) .. (113.19,65.47) -- cycle ;
\draw    (514.4,45.07) .. controls (474.4,45.07) and (449.73,55.07) .. (463.07,69.73) .. controls (476.4,84.4) and (512.4,73.73) .. (512.4,103.73) ;
\draw    (514.4,45.07) .. controls (554.4,45.07) and (579.07,55.07) .. (565.73,69.73) .. controls (552.4,84.4) and (516.88,73.28) .. (516.88,103.28) ;
\draw    (512.4,103.73) -- (512.88,178.08) ;
\draw    (516.88,103.28) -- (517.36,177.63) ;
\draw    (408.48,75.68) .. controls (420.88,60.88) and (446.88,60.48) .. (466.48,75.68) .. controls (486.08,90.88) and (444.88,150.48) .. (453.68,168.48) .. controls (462.48,186.48) and (512.8,194.06) .. (512.88,178.08) ;
\draw    (621.76,75.23) .. controls (609.36,60.43) and (583.36,60.03) .. (563.76,75.23) .. controls (544.16,90.43) and (585.36,150.03) .. (576.56,168.03) .. controls (567.76,186.03) and (517.44,193.61) .. (517.36,177.63) ;
\draw    (408.48,75.68) .. controls (397.68,90.08) and (390.08,164.08) .. (408.08,187.28) .. controls (426.08,210.48) and (482.48,211.68) .. (517.68,211.28) .. controls (552.88,210.88) and (604.48,208.88) .. (618.88,193.68) .. controls (633.28,178.48) and (639.36,100.03) .. (621.76,75.23) ;
\draw  [draw opacity=0][fill={rgb, 255:red, 155; green, 155; blue, 155 }  ,fill opacity=0.47 ] (448.88,73.12) .. controls (448.88,64.85) and (455.63,58.14) .. (463.95,58.14) .. controls (472.27,58.14) and (479.01,64.85) .. (479.01,73.12) .. controls (479.01,81.4) and (472.27,88.1) .. (463.95,88.1) .. controls (455.63,88.1) and (448.88,81.4) .. (448.88,73.12) -- cycle ;
\draw  [draw opacity=0][fill={rgb, 255:red, 155; green, 155; blue, 155 }  ,fill opacity=0.47 ] (499.78,183.02) .. controls (499.78,174.75) and (506.53,168.04) .. (514.85,168.04) .. controls (523.17,168.04) and (529.91,174.75) .. (529.91,183.02) .. controls (529.91,191.3) and (523.17,198) .. (514.85,198) .. controls (506.53,198) and (499.78,191.3) .. (499.78,183.02) -- cycle ;
\draw  [draw opacity=0][fill={rgb, 255:red, 155; green, 155; blue, 155 }  ,fill opacity=0.47 ] (498.48,90.72) .. controls (498.48,82.45) and (505.23,75.74) .. (513.55,75.74) .. controls (521.87,75.74) and (528.61,82.45) .. (528.61,90.72) .. controls (528.61,99) and (521.87,105.7) .. (513.55,105.7) .. controls (505.23,105.7) and (498.48,99) .. (498.48,90.72) -- cycle ;
\draw  [draw opacity=0][fill={rgb, 255:red, 155; green, 155; blue, 155 }  ,fill opacity=0.47 ] (550.48,73.12) .. controls (550.48,64.85) and (557.23,58.14) .. (565.55,58.14) .. controls (573.87,58.14) and (580.61,64.85) .. (580.61,73.12) .. controls (580.61,81.4) and (573.87,88.1) .. (565.55,88.1) .. controls (557.23,88.1) and (550.48,81.4) .. (550.48,73.12) -- cycle ;
\draw  [fill={rgb, 255:red, 0; green, 0; blue, 0 }  ,fill opacity=1 ] (457.92,71.62) .. controls (457.92,70.77) and (458.65,70.07) .. (459.56,70.07) .. controls (460.46,70.07) and (461.2,70.77) .. (461.2,71.62) .. controls (461.2,72.48) and (460.46,73.17) .. (459.56,73.17) .. controls (458.65,73.17) and (457.92,72.48) .. (457.92,71.62) -- cycle ;
\draw  [fill={rgb, 255:red, 0; green, 0; blue, 0 }  ,fill opacity=1 ] (464.2,71.3) .. controls (464.2,70.45) and (464.93,69.75) .. (465.84,69.75) .. controls (466.75,69.75) and (467.48,70.45) .. (467.48,71.3) .. controls (467.48,72.16) and (466.75,72.85) .. (465.84,72.85) .. controls (464.93,72.85) and (464.2,72.16) .. (464.2,71.3) -- cycle ;
\draw  [fill={rgb, 255:red, 0; green, 0; blue, 0 }  ,fill opacity=1 ] (507.95,92.05) .. controls (507.95,91.2) and (508.68,90.5) .. (509.59,90.5) .. controls (510.5,90.5) and (511.23,91.2) .. (511.23,92.05) .. controls (511.23,92.91) and (510.5,93.6) .. (509.59,93.6) .. controls (508.68,93.6) and (507.95,92.91) .. (507.95,92.05) -- cycle ;
\draw  [fill={rgb, 255:red, 0; green, 0; blue, 0 }  ,fill opacity=1 ] (517.7,92.05) .. controls (517.7,91.2) and (518.43,90.5) .. (519.34,90.5) .. controls (520.25,90.5) and (520.98,91.2) .. (520.98,92.05) .. controls (520.98,92.91) and (520.25,93.6) .. (519.34,93.6) .. controls (518.43,93.6) and (517.7,92.91) .. (517.7,92.05) -- cycle ;
\draw  [fill={rgb, 255:red, 0; green, 0; blue, 0 }  ,fill opacity=1 ] (568.2,72.05) .. controls (568.2,71.2) and (568.93,70.5) .. (569.84,70.5) .. controls (570.75,70.5) and (571.48,71.2) .. (571.48,72.05) .. controls (571.48,72.91) and (570.75,73.6) .. (569.84,73.6) .. controls (568.93,73.6) and (568.2,72.91) .. (568.2,72.05) -- cycle ;
\draw  [fill={rgb, 255:red, 0; green, 0; blue, 0 }  ,fill opacity=1 ] (560.45,72.3) .. controls (560.45,71.45) and (561.18,70.75) .. (562.09,70.75) .. controls (563,70.75) and (563.73,71.45) .. (563.73,72.3) .. controls (563.73,73.16) and (563,73.85) .. (562.09,73.85) .. controls (561.18,73.85) and (560.45,73.16) .. (560.45,72.3) -- cycle ;
\draw  [fill={rgb, 255:red, 0; green, 0; blue, 0 }  ,fill opacity=1 ] (508.2,183.08) .. controls (508.2,182.22) and (508.93,181.53) .. (509.84,181.53) .. controls (510.75,181.53) and (511.48,182.22) .. (511.48,183.08) .. controls (511.48,183.93) and (510.75,184.63) .. (509.84,184.63) .. controls (508.93,184.63) and (508.2,183.93) .. (508.2,183.08) -- cycle ;
\draw  [fill={rgb, 255:red, 0; green, 0; blue, 0 }  ,fill opacity=1 ] (518.7,182.83) .. controls (518.7,181.97) and (519.43,181.28) .. (520.34,181.28) .. controls (521.25,181.28) and (521.98,181.97) .. (521.98,182.83) .. controls (521.98,183.68) and (521.25,184.38) .. (520.34,184.38) .. controls (519.43,184.38) and (518.7,183.68) .. (518.7,182.83) -- cycle ;

\draw (6,240.83) node [anchor=north west][inner sep=0.75pt]   [align=left] {Figure 6: Points inside the shaded disks represent arcs with tangency order higher than each surface exponent.};
\draw (27,30.4) node [anchor=north west][inner sep=0.75pt]    {$a)$};
\draw (364,29.4) node [anchor=north west][inner sep=0.75pt]    {$b)$};
\draw (156.33,105.6) node [anchor=north west][inner sep=0.75pt]    {$\theta _{1}$};
\draw (144.33,145.6) node [anchor=north west][inner sep=0.75pt]    {$\theta _{2}$};
\draw (197,148.6) node [anchor=north west][inner sep=0.75pt]    {$\theta _{3}$};
\draw (185.67,107.93) node [anchor=north west][inner sep=0.75pt]    {$\theta _{4}$};
\draw (248.67,52.27) node [anchor=north west][inner sep=0.75pt]    {$\theta _{5}$};
\draw (288.67,172.27) node [anchor=north west][inner sep=0.75pt]    {$\theta _{6}$};
\draw (60,174.27) node [anchor=north west][inner sep=0.75pt]    {$\theta _{7}$};
\draw (94.33,52.27) node [anchor=north west][inner sep=0.75pt]    {$\theta _{8}$};
\draw (469.47,56.27) node [anchor=north west][inner sep=0.75pt]    {$\lambda _{1}$};
\draw (539.47,56.88) node [anchor=north west][inner sep=0.75pt]    {$\lambda _{2}$};
\draw (521.07,85.47) node [anchor=north west][inner sep=0.75pt]    {$\lambda _{3}$};
\draw (522.27,165.87) node [anchor=north west][inner sep=0.75pt]    {$\lambda _{4}$};
\draw (571.47,71.87) node [anchor=north west][inner sep=0.75pt]    {$\lambda _{5}$};
\draw (439.87,67.47) node [anchor=north west][inner sep=0.75pt]    {$\lambda _{6}$};
\draw (488.27,165.07) node [anchor=north west][inner sep=0.75pt]    {$\lambda _{7}$};
\draw (488.27,83.87) node [anchor=north west][inner sep=0.75pt]    {$\lambda _{8}$};

\end{tikzpicture}
\end{center}

\begin{remark}
    Remarks \ref{Rem: removing segments} and \ref{Rem: removing piece of a nodal zone} explicit that if one intends to associated a word to a circular snake $X$, as it was done for snakes in Definition 6.10 of \cite{GabrielovSouza}, by removing a H\"older triangle $T(\gamma_1,\gamma_2) \subset X$ such that either $V(T(\gamma_1,\gamma_2))$ contains exactly one segment of $X$ or $\gamma_1, \gamma_2$ are in a same nodal zone of $X$, the obtained H\"older triangle $\overline{X \setminus T(\gamma_1,\gamma_2)}$ may not be a snake. Therefore, the existence of a circular snake associated with a given word does not naturally arises from Theorem 6.23 of \cite{GabrielovSouza} as the authors hopefully expected, i.e., an analog of this theorem for circular snakes will not easily follow from it. Consequently, a weakly outer classification for circular snakes turns to be harder (given the difficulty of choosing a word associated with it), once the word associated with a snake was fundamental to obtain Theorem 6.28 of \cite{GabrielovSouza}. 
\end{remark}

\section{Foundations of Circular Snake Names}

This section is devoted to establishing the basic concepts to understand the meaning of circular snake names. Every definition here is purely abstract in a first read, but their geometric and combinatorial motivations will become very clear in further sections.

\begin{Def}\label{DEF: word}
    Consider an alphabet $A=\{x_1,\ldots,x_n\}$. A \textbf{word} $W$ of length $m=|W|$ over $A$ is a finite sequence of $m$ letters in $A$, i.e., $W=[w_1 \cdots w_m]$ with $w_i\in A$ for $1\le i \le m$. One also considers the \textbf{empty word} $\varepsilon=[\;]$ of length $0$. Given a word $W=[w_1\cdots w_m]$, the letter $w_i$ is called the $i$-\textbf{th entry} of $W$.
	If $w_i=x$ for some $x\in A$, it is called a \textbf{node entry} of $W$ if it is the first occurrence of $x$ in $W$.
	Alternatively, $w_i$ is a node entry of $W$ if $w_j\ne w_i$ for all $j<i$
\end{Def}

\begin{Def}\label{DEF: open, semi-open subwords etc}
    Given a word $W=[w_1\cdots w_m]$, a \textbf{subword} of $W$ is either an empty word or a word $[w_j\cdots w_k]$ formed by consecutive entries of $W$ in positions $j, \ldots, k$, for some $1\le j\le k\le m$. We also consider \textbf{open} subwords $(w_j\cdots w_k)$ formed by the entries of $W$ in positions $j+1, \ldots, k-1$, for some $1\le j<k\le m$, and \textbf{semi-open} subwords $(w_j\cdots w_k]$ and $[w_j\cdots w_k)$ formed by the entries of $W$ in positions $j+1, \ldots, k$ and $j, \ldots, k-1$, respectively.
\end{Def}

\begin{Def}\label{Def: partition assoc with a snake name}
    Let $W=[w_1\cdots w_m]$ be a word of length $m$ containing $n$ distinct letters $x_1,\ldots,x_n$.
	We associate with $W$ a partition $P(W)=\{I_1,\ldots,I_n\}$ of the set $\{1,\ldots,m\}$ where $i\in I_j$ if and only if $w_i=x_j$. Two words $W$ and $W'$ are \textbf{equivalent} if $P(W)=P(W')$. In particular, equivalent words have the same length and the same number of distinct letters.
\end{Def}

\begin{remark}\label{REM: convention for word and equivalent words}
    Note that $P(W)$ does not depend on the alphabet, only on positions where the same letters appear. For convenience we often assign $a$ (or $x_1$) to the first letter of the word $W$, $b$ (or $x_2$) to the second letter of $W$ other than $a$, and so on. The words $X=abcdacbd$, $Y=bcdabdca$ and $Z=xyzwxzyw$ are equivalent, since
	$P(X)=P(Y)=P(Z)=\{ \{1,5\},\{2,7\},\{3,6\},\{4,8\}\}$.
\end{remark}

\begin{Def}\label{Def: circular word and power of a circ word}
    A word $W=[w_1\cdots w_m]$ is called a \textbf{circular word} if it contains at least two different letters and $w_1=w_m$. For $1\le j \le m-1$, we also define the \textbf{$j$-th power} of a circular word $W=[w_1\cdots w_m]$ as the word $W^j= [w_1^j\cdots w_m^j]$ given as follows: we set $w_1^j=w_m^j = w_j$ and $$w_i^j = \left\{
	\begin{array}{ccl}
		w_{j+i-1}, & \emph{if} & 2\le i \le m-j \\
		w_{i-m+j}, & \emph{if} & m-j < i \le m-1
	\end{array}\right..$$
In other words, $W^j=[w_j \cdots w_{m-1} w_1 \cdots w_j]$.
Note that $W^j$ is a circular word and $W^1=W$.
 Let $W$ and $W'$ be circular words. We say that $W'$ is a \textbf{circular subword} of $W$ if $W'$ is a subword of $W^j$ for some $1\le j \le m-1$. 
\end{Def}

\begin{remark}
    Let $W$ be a circular word of length $m$. In general, we do not have $P(W^i) = P(W)$ for $1\le i < m$. 
\end{remark}

\begin{Def}\label{Def: circular equivalence}
    We say that two circular words $W$ and $W'$ are \textbf{circular equivalent} if they 
 have the same lenght $m$ and $P(W') = P(W^i)$ for some $1\le i < m$. This relation defines an equivalence relation in the set of circular words with length $m$. We denote the class of $W$ under this relation by $[[W]]$. 
\end{Def}

\begin{Def}\label{Def: primitive and binary words}
    A word $W$ is \textbf{primitive} if it contains no repeated letters, i.e., if each part of $P(W)$ contains a single entry. We say that $W=[w_1\cdots w_m]$ is \textbf{semi-primitive} if $w_1=w_m$ and the subword $[w_1\cdots w_m)$ of $W$ is primitive, i.e., if each part of $P(W)$ except $I_1 = \{1, m\}$ contains a single entry. A word $W$ is \textbf{binary} if each of its letters appears in $W$ exactly twice, i.e., if each part of $P(W)$ contains exactly two entries.
\end{Def}

\begin{Def}\label{Def: rules for snakes names}
    Given a circular word $W=[w_1\cdots w_m]$, we say that $W$ is a \textbf{circular snake name} if the following conditions hold:	
	\begin{enumerate}
		\item[$(i)$] Each of the letters of $W$ appears in $W$ at least twice;
		\item[$(ii)$] For any $k\in\{1,\ldots,m-1\}$, there is a circular, semi-primitive subword of $W$ containing $w_k$ exactly once.
	\end{enumerate}
\end{Def}

\begin{remark}
    It follows from Definition \ref{Def: rules for snakes names} that a circular snake name has at least five letters.
\end{remark}

\section{Relating circular snakes with their names}\label{Subsec: snake names}

Let $X$ be a circular $\beta$-snake with $n$ nodes $\mathcal{N}_1,\ldots, \mathcal{N}_n$ and $m$ nodal zones $N_1,\ldots N_m$. Let $N$ be a fixed nodal zone in a node $\mathcal{N}$. Consider distinct arcs $\gamma_1, \gamma_2 \in N$ and the H\"older triangle $T=T(\gamma_1,\gamma_2)$ such that $V(T)\not\subset N$ (the H\"older triangle $T$ is the triangle in $X$ with $\gamma_1$, $\gamma_2$ as boundary arcs and lowest exponent). From now on we assume that the link of $T$ is oriented from $\gamma_1$ to $\gamma_2$, and the nodal zones of $X$ are enumerated in the order in which they appear when we move along the link of $T$ (which is a subset of the link of $X$) from $\gamma_{1}$ to $\gamma_{2}$. We enumerate the nodes of $X$ similarly, starting with the node $\mathcal{N}_1$ containing $\gamma_1$, skipping the nodes already assigned.
In particular, $\gamma_1, \gamma_2 \in N_1 \subset \mathcal{N}_1$. Consider an alphabet $A=\{x_1,\ldots,x_n\}$ where each letter $x_j$ is assigned to the node $\mathcal{N}_j$ of $X$.	

\begin{Def}\label{Def:agreed orientations}
Let $X$ be a circular snake and let $N$ and $N'$ be two distinct nodal zones of $X$ with orientations given by the H\"older triangles $T=T(\gamma_1,\gamma_2)$ and $T'=T(\gamma'_1,\gamma'_2)$, respectively. We say that the orientations of $N$ and $N'$ \textbf{agree} if the Valette link of the H\"older triangle $T(\gamma_1,\gamma'_1) \subset T$ contains $\gamma'_2$.  An \textbf{orientation of $X$} is the equivalence class of the orientation of nodal zones, where two orientations of $N$ and $N'$ are equivalent if, and only if, the orientations of $N$ and $N'$ agree.
\end{Def}
 
 \begin{Def}\label{Def: word associated with a snake}
 A word $W=[w_1\cdots w_m]$ over $A$ is \textbf{associated with the pair} $(X,N)$ if, while moving along the link of $T$ from $\gamma_1$ to $\gamma_2$, the $i$-th entry $w_i$ of $W$ is the letter $x_j$ assigned to the node $\mathcal{N}_j$ to which the nodal zone $N_i$ belongs.
\end{Def}

\begin{remark}\label{Rem: word associated with a snake}
By construction, there is a unique word over $A$ associated with $(X,N)$ related to a given orientation of $T$. So, we denote it by $W_N(X)$. When $X$ is understood, we may write just $W_N$.
\end{remark}

\begin{Exam}\label{Example:circular snake name}
In each of the three following figures, we have the link of a circular snake, with the shaded disks representing nodes. Each node is assigned to a letter $a$, $b$, $c$.

\tikzset{every picture/.style={line width=0.75pt}} 



Let $X$ be the circular snake whose link is represented in Figure 7a, $N$ the nodal zone containing the arcs $\gamma_1, \gamma_2$ and $\tilde N$ the nodal zone containing the arcs $\tilde{\gamma}_1, \tilde{\gamma}_2$. Taking into account the orientation of $T=T(\gamma_1,\gamma_2)$ and $\tilde T=T(\tilde{\gamma}_1,\tilde{\gamma}_2)$ (which agrees), the word over $A=\{a,b\}$ associated with $(X,N)$ is $W_N(X)=[ababa]$ and the word over $A$ associated with $(X,\tilde N)$ is $W_{\tilde N}(X)=[babab]$.

On the other hand, let $X$ be the circular snake whose link is represented in Figures 7b and 7c, and $N$ be the nodal zone containing the arcs $\gamma_1, \gamma_2$. Considering the orientation of $T=T(\gamma_1,\gamma_2)$ (Figure 2), we have $W_N=[abacbacba]$, and considering the orientation of $T'=T(\gamma_2,\gamma_1)$, we have $W_N=[abcabcaba]$. 
\end{Exam}

\begin{Prop}\label{Prop: word assoc with a snake is a snake name}
	Let $X$ be a circular snake and $W_N=W_N(X)$ be the word associated with $(X,N)$. Then $W_N$ is a circular snake name. 
\end{Prop}
\begin{proof}
    Let $W_N=[w_1 \cdots w_m]$. Since each $w_j$ is a letter assigned to a node of $V(X)$ and each node contains at least two nodal zones (Proposition \ref{Prop: nodes of a circular snakes}), condition $(i)$ of Definition \ref{Def: rules for snakes names} is satisfied.
 
    Let $z$ be any letter in $W_N$ and let $\gamma$ be an arc in the nodal zone $N_z$, endowed with the same orientation as $N$ (see Definition \ref{Def:agreed orientations}), which $z$ was assigned. Since $\gamma$ is abnormal, there are LNE H\"older triangles $T:=T(\theta,\gamma)$ and $T':=T(\gamma,\theta')$ in $X$ such that $T \cap T'$ is not LNE and $T \cap T'=\{\gamma\}$. Without loss of generality we may assume that $\theta$ and $\theta'$ are in nodal zones on the same node and that the orientation of $T(\theta,\theta')$ agrees with that on $N_z$. Thus, there exists a subword $\widetilde{W} = [\tilde{w}_1 \cdots \tilde{w}_n]$ of some power of $W_N$ containing $z$ and such that the letter $\tilde{w}_1$ is assigned to the nodal zone containing $\theta$ and $\tilde{w}_n$ is assigned that containing $\theta'$. 
 
    Up to replace the arcs $\theta$ and $\theta'$ and consider ``smaller'' H\"older triangles $T$ and $T'$, we may also assume that for any $\lambda,\lambda'\in V(T\cup T')$ with $\tord(\lambda,\lambda')>\itord(\lambda,\lambda')$, we have $\lambda,\lambda'\notin G(T\cup T')$. In particular, there is no pair of equal letters $\tilde{w}_i$ and $\tilde{w}_j$, $i<j$, in $\widetilde{W}$, other than $\tilde{w}_1$ and $\tilde{w}_n$. Consequently, $G(T\cup T')$ is a LNE zone, what implies that $\widetilde{W}$ is a circular semi-primitive subword of $W_N$ containing $z$ exactly once. Thus, condition $(ii)$ of Definition \ref{Def: rules for snakes names} is satisfied. 
\end{proof}

The next proposition is intuitive, as we construct circular snake names by starting on nodal zones with orientations agreeing, and then moving along the link. Since we can start from any nodal zone, it is expected to obtain snake names that are equivalent in a cyclic order.

\begin{Prop}\label{Prop:Circular Class}
	Let $X$ be a circular snake and let $N$ and $N'$ be two distinct nodal zones of $X$, with orientations agreeing. Then, $[[W_N(X)]] = [[W_{N'}(X)]]$ (see Definition \ref{Def: circular equivalence}).
\end{Prop}
\begin{proof}
Let $W_N = [w_1 \cdots w_m]$ and let $N_1, \cdots N_{m-1}$ be the nodal zones of $X$ such that $w_1$ and $w_m$ are assigned to $N_1$ and $w_i$ is assigned to $N_i$, for $i=2,\dots,m-1$. Then $N'=N_j$ for some $j \in \{ 2, \cdots, m-1 \}$, since $N\neq N'$. We claim that $W_{N'}=(W_N)^j = [w_j \cdots w_{m-1} w_1 \cdots w_j]$.

In each $N_i$ we consider a pair of arcs $\gamma_1^i,\gamma_2^i$ in a such way that the Valette link of the  H\"older triangle $T_i:=T(\gamma_1^i,\gamma_2^i)$ is not contained in $N_i$ and its orientation agrees with that of $N$ (consequently with that of $N'$). Let $\Gamma = \{\gamma_1^i,\gamma_2^i : i=1,\cdots,m-1\}$, and for $i\ge m$, define $\gamma_1^i=\gamma_1^{i-m+1}$, $\gamma_2^i=\gamma_2^{i-m+1}$. By construction of $W_N$ and orientation of the $T_i$, we deduce that each pair $\gamma_1^i,\gamma_2^{i+1}$ defines a H\"older triangle $T(\gamma_1^i,\gamma_2^{i+1})$, which contains no elements of $\Gamma$ and another one which contains all elements of $\Gamma$. Write $W_{N'} = [w_1' \cdots w_m']$ and $(W_N)^j = [w_1^j\cdots w_m^j]$. By construction, we have $w_m' = w_m^j = w_1^j = w_1'$. 

Assume, for sake of contradiction, that $w_1' = w_1^j, \cdots, w_r' = w_r^j$, but $w_{r+1}' \not = w_{r+1}^j$, for some $2\le r \le m-1$. Consider first the case $2 \leq r+1 \leq m-j$, so that $w_{r+1}^j = w_{r+j}$ (see Definition \ref{Def: circular word and power of a circ word}). Since $w_{r+1}' \not = w_{r+j}$ and the orientation of $T(\gamma_1^{r+j-1},\gamma_2^{r+j})$ agrees with that of $N'$, this triangle should contain arcs $\gamma_1^i,\gamma_2^i$ from the nodal zone assigned to $w_{r+1}'$, a contradiction. 
The case $m-j+1\le r+1 \le m$ is analogous, by changing $w_{r+j}$ to $w_{r+j-m+1}$.
\end{proof}

\begin{Cor}\label{Cor:Circular Class}
	Let $X$ be a circular snake and $N$ a nodal zone of $X$. If $W' \in [[W_N]]$, then there is a nodal zone $N'$ of $X$ such that the orientations of $N$ and $N'$ agree and $[[W_N]] = [[W_{N'}]]$.
\end{Cor}

\begin{remark}
    By Proposition \ref{Prop:Circular Class} and Corollary \ref{Cor:Circular Class}, we obtain that $X$ has two orientations. We assign a different sign in $\{1,-1\}$ for each such orientation.
\end{remark}

\begin{Def}
    Let $X$ be a circular snake and let $N$ be a nodal zone of $X$, which orientation induces an orientation $\varepsilon \in \{1,-1\}$ in $X$. If $\tilde N$ is a nodal zone which orientation induces an orientation $-\varepsilon$ in $X$, we define $-[[W_{N}X]]$ as the equivalence class $[[W_{\tilde N}X]]$.
\end{Def}

\begin{Def}\label{Def: circ snake name as a class}
    Let $X$ be a circular snake. For a fixed orientation on $X$, we define the circular snake name associated with $X$ to be $[[W_N(X)]]$, where $N$ is any nodal zone of $X$, whose orientation is in the equivalence class of the given orientation of $X$.
\end{Def}

\begin{remark}\label{Rem: Orientation Reverse not equal to cycle}
    One can have $[[W_N(X)]]=-[[W_N(X)]]$ and an example is the circular snake $X$ represented by Figure 1 of Example \ref{Example:circular snake name}, where $[[W_N(X)]]=-[[W_N(X)]]=[[abab]]$. On the other hand, it is also possible that $-[[W_N(X)]] \ne [[W_N(X)]]$. For example, consider the circular snake $X$ whose link is represented in Figures 8a and 8b, each figure with the orientation indicated by the corresponding arrows.
    \begin{center}

\tikzset{every picture/.style={line width=0.75pt}} 


    \end{center}

Note that $$[[W_N]]=[[x_1x_2x_3x_4x_5x_6x_4x_2x_3x_5x_1x_6x_1]]; \, -[[W_{N}]]=[[x_1x_2x_1x_3x_4x_5x_6x_2x_3x_6x_4x_5x_1]].$$

The only word in $[[W_N]]$ whose first and third letters are equal is $$(W_N)^{11}=[x_1x_6x_1x_2x_3x_4x_5x_6x_4x_2x_3x_5x_1].$$ 
Then, if $-W_{N} \in [[W_N]]$, $-W_{N}$ and $(W_N)^{11}$ are equivalent. However, $-W_{ N}$ is equivalent to $[abacdefbcfdea]$ (by the identification $x_1\mapsto a$; $\dots$; $x_6\mapsto f$) and $(W_N)^{11}$ is equivalent to $[abacdefbecdfa]$ (by the identification $x_1\mapsto a$; $x_6\mapsto b$; $x_2\mapsto c$; $\dots$; $x_5\mapsto f$). Since $[abacdefbcfdea] \ne [abacdefbcfdea]$, we have a contradiction and therefore $[[W_N]]\ne -[[W_{ N}]]$.
\end{remark}



\section{Realization Theorem for circular snake names}

The goal of this section is to prove that each circular snake name can be realized as a circular snake. The last of the following three definitions describes the construction of a surface with circular link. The results presented in the sequel establishes the necessary properties for this surface to be the desired abnormal surface.

\begin{Def}\label{Def: node entry and the sets Pi's}
 \normalfont Let $W=[w_1\cdots w_m]$ be a circular snake name. We define recursively the index function $r: \{1,\dots,m\} \to \{1,\dots,m\}$ as follows: $r(1)=1$ and, for every $2\le j\le m$, if $w_j$ is a node entry, then set $r(j)=j$; otherwise $r(j)=\ell$, where $\ell<j$ is such that $w_\ell$ is a node entry and $w_{\ell}=w_j$.
\end{Def}

\begin{Def}\label{Def: linear HT}
	\normalfont Given arcs $\gamma,\gamma'\in V(\R^p)$, we define the set $\Delta(\gamma,\gamma')$ as the union of straight line segments $[\gamma(t),\gamma'(t)]$ connecting $\gamma(t)$ and $\gamma'(t)$, for any $t\ge 0$.
\end{Def}

\begin{Def}\label{Def: arcs and HT assoc with a snake name}
	\normalfont Let $W=[w_1,\ldots,w_m]$ be a circular snake name. Consider the space $\R^{2m-2}$ with the standard basis $\mathbf{e}_1,\ldots, \mathbf{e}_{2m-2}$. Let $\alpha,\beta\in \F$, with $1\le \beta < \alpha$, and let $\delta_1,\ldots, \delta_m$ and $\sigma_1,\ldots, \sigma_{m-1}$ be arcs in $V(\R^{2m-2})$ (parameterized by the first coordinate, which is equivalent to the distance to the origin) such that:
	\begin{enumerate}
		\item $\delta_1(t)=\delta_m(t)=t\mathbf{e}_1$;
		\item for $1<j< m$, if $w_j$ is a node entry then $\delta_j(t) = t\mathbf{e}_1 + t^\beta \mathbf{e}_j$. Otherwise, $\delta_j(t) = \delta_{r(j)}(t) + t^\alpha \mathbf{e}_j$;
		\item for any $j=1,\ldots,m-1$, we define $\sigma_j(t)=t\mathbf{e}_1 + t^\beta \mathbf{e}_{m-1+j}$.
	\end{enumerate}

 Consider, for $j=1,\ldots,m-1$, the $\beta$-H\"older triangles $T_j=\Delta(\delta_j,\sigma_j)\cup \Delta(\sigma_j,\delta_{j+1})$, and for $j=1,\ldots, m-2$, the $\beta$-H\"older triangles $T'_j=\Delta(\sigma_{j},\delta_{j+1})\cup \Delta(\delta_{j+1},\sigma_{j+1})$. Let $S_{W}=\bigcup_{j=1}^{m-1}T_j$. The surface $S_W$ is defined as the \textbf{$\beta$-surface associated with the circular snake name} $W$. By construction, $S_W$ has circular link, which will be assumed to be oriented from $\delta_1$ to $\delta_m$ in such a way that the arcs $\delta_j$ and $\sigma_k$ appear in $S_W$ in the order $\delta_1,\sigma_1,\delta_2,\ldots,\delta_{m-1},\sigma_{m-1},\delta_m$.
\end{Def}

\begin{remark}
Note that each H\"older triangle $\Delta(\sigma_{j},\delta_{j})$ lies in the linear space generated by $\be_1,\be_j,\be_{r(j)},\be_{m-1+j}$ and each H\"older triangle $\Delta(\delta_{j}, \sigma_{j+1})$ lies in the linear space generated by $\be_1,\be_{j+1},\be_{r(j+1)},\be_{m-1+j}$. Since $j\ne j+1$ and $r(j)\ne r(j+1)$, we have $\Delta(\sigma_j,\delta_j)\cap \Delta(\sigma_\ell,\delta_\ell) , \, \Delta(\sigma_j,\delta_j)\cap \Delta(\delta_\ell,\sigma_{\ell+1}) \ne \{0\}$ if and only if $j=\ell$.
Therefore, the surface $S_W$ does not have self-intersection.
\end{remark}

Now we proceed towards proving that $S_W$ has the structure of a circular snake. The following Lemma is a direct consequence of Definition \ref{Def: arcs and HT assoc with a snake name}. This result shows that $S_W$ has the suitable tangency order setting to realize the word $W$. 
\begin{Lem}\label{Lem: tords of deltas and sigmas}
	The arcs $\delta_1,\ldots, \delta_m, \sigma_1,\ldots, \sigma_{m-1}$ of Definition \ref{Def: arcs and HT assoc with a snake name} satisfy the following:
	\begin{enumerate}
		\item[(i)] for all $i\ne j$, $\tord(\delta_i,\delta_{j})=\left\{
		\begin{array}{cl}
		\alpha & \text{if} \;  w_i=w_{j} \\
		\beta & otherwise
		\end{array}\right.$;
		\item[(ii)] for all $i$ and $j$, $\tord(\sigma_i,\delta_{j})=\beta$;
		\item[(iii)] for all $i$ and $j$ with $i\ne j$, $\tord(\sigma_i,\sigma_{j})=\beta$ .
	\end{enumerate}
\end{Lem}

The following two Lemmas are analogous to Lemmas 6.18 and 6.19 in \cite{GabrielovSouza}, and the proof presented there also works here analogously.

\begin{Lem}\label{Lem: Tj is NE}
	Each $T_j$ and $T_j'$ in Definition \ref{Def: arcs and HT assoc with a snake name} is a LNE $\beta$-H\"older triangle.
\end{Lem}

\begin{Cor}\label{Cor: HT assoc with a snake name is non-singular}
	Let $W$ be a circular snake name and let $S_W$ be the surface associated with $W$ as in Definition \ref{Def: arcs and HT assoc with a snake name}. Then every arc $\gamma \in V(S_W)$ is Lipschitz non-singular.
\end{Cor}

\begin{proof}
    Indeed, any $\gamma \in V(S_W)$ is an interior arc of some $T_j$ or $T_j'$, which are LNE.
\end{proof} 

The statement below is the analogue of Lemma 6.21 in \cite{GabrielovSouza} where $W$ is a snake name instead of circular snake name. Since the assertions deals with a primitive subword of $W$, the proof there also works here by choosing a suitable power $W^j$ of the circular snake name and considering it as a snake name.

\begin{Lem}\label{Lem: HT assoc with a primitive subsnake name is NE}
	Let $W=[w_1\cdots w_m]$ and $S_W$ be as in Corollary \ref{Cor: HT assoc with a snake name is non-singular}. If $W'=[w_j\cdots w_l]$ is a primitive circular subword of $W$ then $T(\delta_j,\delta_l)\subset S_W$ is a LNE $\beta$-H\"older triangle.
\end{Lem}

\begin{Cor}\label{Cor: interior deltas of T are abnormal}
	Let $W=[w_1\cdots w_m]$ be a circular snake name and let $S_W$ be the surface associated with $W$. Then $V(S_W) = \Abn(S_W)$.
\end{Cor}

\begin{proof}
    Each $\delta_k$ is an abnormal arc, for $k = 1,\ldots, m-1$. 
    Indeed, since $W$ is a circular snake name, there exists a semi-primitive subword $[w_i \ldots w_l]$ of some power $W^j$ of $W$ containing $w_k$ as interior letter. Therefore, $[w_i \ldots w_k]$ and $[w_k \ldots w_l]$ are primitive. By Lemma \ref{Lem: HT assoc with a primitive subsnake name is NE}, the H\"older triangles $T(\delta_i,\delta_k)$ and $T(\delta_k,\delta_l)$ are LNE. Moreover, $\tord(\delta_i,\delta_l) = \alpha > \beta = \itord(\delta_i,\delta_l)$ since $w_i=w_l$. Thus, $\delta_k$ is abnormal.

    Let $\gamma \in V(S_W)$. By considering a $j$-th power of $W$, we may assume that $\gamma \subset T_{k-1}$, for some $k>1$ such that $[w_i \ldots w_l]$ is a primitive subword realizing the abnormality of $\delta_k$ with $1 \leq i < k < l < k $. It follows that the subwords $[w_i \ldots w_{k-1}]$ and $[w_k \ldots w_l]$ are also primitive and hence the H\"older triangles $T(\delta_i,\gamma)$ and $T(\gamma,\delta_l)$ are LNE, since they are contained in the LNE H\"older triangles $T(\delta_i,\delta_k)$ and $T(\delta_{k-1},\delta_l)$, respectively. Thus, the arc $\gamma$ is abnormal, and the result follows.
\end{proof}

\begin{Teo}\label{Teo: snake name realization}
Let $W$ be a circular snake name. Then, there exists a circular snake $S$ such that $W = W_N(S)$, where $N$ is a nodal zone of $S$. 
\end{Teo}

\begin{proof}
    Let $W=[w_1\ldots w_m]$ be a circular snake name with $n$ distinct letters. Let $S=S_W$ be the $\beta$-surface associated with $W$ (see Definition \ref{Def: arcs and HT assoc with a snake name}) and let $N$ be a nodal zone whose orientation induces the same orientation of $S_W$. We claim that $S$ is a circular $\beta$-snake. Indeed, Corollary \ref{Cor: HT assoc with a snake name is non-singular} implies that $S$ is a non-singular $\beta$-surface, so it remains to prove that $G(S)=\Abn(S)$. The inclusion $\Abn(S)\subset G(S)$ is obvious, and the inverse inclusion is given by Corollary \ref{Cor: interior deltas of T are abnormal}.

	Finally, to prove that $W=W_N(S)$, note that the link of $S$ is oriented from $\delta_1$ to $\delta_m$ (see Definition \ref{Def: arcs and HT assoc with a snake name}), the $i$-th nodal zone of $T$ is $N_i=\{\gamma \in V(S) : \itord(\gamma,\delta_i)>\beta\}$, for $i=1,\ldots,m$, and the $k$-th node of $S$ is $\mathcal{N}_k=\bigcup_{i  \in I_k}N_i$ for each $k=1,\ldots,n$ (here $I_k$ is as in Definition \ref{Def: partition assoc with a snake name}).
In particular, $S$ is a circular $\beta$-snake with $m$ nodal zones and $n$ nodes, such that $W = W_{N_1}(S)$ and the result follows.
\end{proof}

\section{Weak outer bi-Lipschitz classification for circular snakes}\label{Subsection: weak equivalence}

\begin{Def}\label{Def: clusters}
Let $\cN$ and $\cN'$ be nodes of a circular $\beta$-snake $X$, and let $\mathcal{S}(\cN,\cN')$ be the (possibly empty) set of all segments of $X$ having adjacent nodal zones in the nodes $\cN$ and $\cN'$. Two segments $S$ and $S'$ in $\mathcal{S}(\cN,\cN')$ belong to the same \textbf{cluster}
if $\tord(S,S')>\beta$. This defines a \textbf{cluster partition} of $\mathcal{S}(\cN,\cN')$.
The size of each cluster $C$ of this partition is equal to the multiplicity of each segment $S\in C$ (see Definition \ref{Def of multiplicity}).
\end{Def}

In this section, we consider the combinatorial and geometric significance of the cluster partitions of the sets $\mathcal{S}(\cN,\cN')$ in Definition \ref{Def: clusters}. In order to do so, one must look to weak bi-Lipschitz maps.

\begin{Def}\label{Def:weak equivalence}
Let $h:X\to X'$ be a homeomorphism of two $\beta$-surfaces with circular link $X$ and $X'$, bi-Lipschitz with respect to the inner metrics of $X$ and $X'$. We say that $h$ is \textbf{weakly outer bi-Lipschitz} when, for any two arcs $\gamma$ and $\gamma'$ of $V(X)$, we have
$$
\tord(h(\gamma),h(\gamma'))>\beta \iff \tord(\gamma,\gamma')>\beta.
$$
If such a homeomorphism exists, we say that $X$ and $X'$ are \textbf{weakly outer Lipschitz equivalent}.
\end{Def}

We now describe when two circular $\beta$-snakes are weakly outer Lipschitz equivalent. When they have nodal zones, the following criterion is an equivalent version of Theorem 6.28 of \cite{GabrielovSouza}.

\begin{Teo}\label{Teo:weak equivalence}
Two circular $\beta$-snakes $X$ and $X'$ with nodal zones are weakly outer Lipschitz equivalent if, and only if, they can be oriented so that
\begin{enumerate}
\item[(i)] They have the same circular snake names (see Definition \ref{Def: circ snake name as a class}), the nodes $\cN_1,\ldots,\cN_n$ of $X$ are in one-to-one correspondence with the nodes
$\cN'_1,\ldots,\cN'_n$ of $X'$ (with $\cN_i$ corresponding to $\cN'_i$, for each $i$), and the nodal zones $N_1,\ldots,N_m$ of $X$ are in one-to-one correspondence with the nodal zones $N'_1,\ldots,N'_m$ of $X'$ (with $N_i$ corresponding to $N'_i$, for each $i$);
\item[(ii)] For any two nodes $\cN_j$ and $\cN_k$ of $X$, and the corresponding nodes $\cN'_j$ and $\cN'_k$ of $X'$,
each cluster of the cluster partition of the set $\mathcal{S}(\cN'_j,\cN'_k)$ (see Definition \ref{Def: clusters}) consists
of the segments of $X'$ corresponding to the segments of $X$ contained in a cluster of the cluster partition of the set $\mathcal{S}(\cN_j,\cN_k)$. This correspondence is determined by the orientation of $X$ and $X'$ in a continuous way, and so each segment containing $\cN_j$ and/or $\cN_k$ correspond to a segment $\cN'_j$ and/or $\cN'_k$.
\end{enumerate}
\end{Teo}

\begin{proof}
    The tangency order condition in Definition \ref{Def:weak equivalence} implies that a weakly outer bi-Lipschitz homeomorphism $h:X\to X'$ defines equivalence of the circular snake names $W=W(X)$ and $W'=W(X')$, and identifies cluster partitions of the sets $\mathcal{S}(N_j,N_k)$ and $\mathcal{S}(N'_j,N'_k)$ for any $j$ and $k$. Thus, the direct implication holds. In the following, we prove that conditions $(i)$ and $(ii)$ of Theorem \ref{Teo:weak equivalence} imply weak outer Lipschitz equivalence of the circular snakes $X$ and $X'$. Here we are following the given orientations for $X$ and $X'$.

By Theorem \ref{Teo: segments are LNE}, the nodal zones adjacent to any segment of $X$ and $X'$ lie in two distinct nodes.
Since the snake names $W$ and $W'$ are equivalent, each nodal zone $N_j$ of $X$ corresponds to the $j$-th entry $w_j$ of $W$ and each nodal zone $N'_j$ of $X'$ corresponds to the $j$-th entry $w'_j$ of $W'$. Moreover, nodal zones $N_j$ and $N_k$ of $X$ (resp., $N'_j$ and $N'_k$ of $X'$) belong to the same node if, and only if, $w_j=w_k$ (resp., $w'_j=w'_k$).

According to Corollary \ref{Cor: intrinsic pancake decomp of a CS}, if we choose an arc $\gamma_j$ in each nodal zone $N_j$ of $X$ and an arc $\gamma'_j$ in each nodal zone $N'_j$ of $X'$ we obtain pancake decompositions of $X$ and $X'$, such that each pancake $X_j=T(\gamma_j,\gamma_{j+1})$ of $X$ (resp., pancake $X'_j=T(\gamma'_j,\gamma'_{j+1})$ of $X'$) is a $\beta$-H\"older triangle corresponding to a segment of $X$ with adjacent nodal zones $N_j$ and $N_{j+1}$ (resp., to a segment of $X'$ with adjacent nodal zones $N'_j$ and $N'_{j+1}$). 

We construct a weakly outer bi-Lipschitz homeomorphism $h:X\to X'$ as follows:

\begin{enumerate}
    \item First, we define $h$ on each arc $\gamma_j$ as the map $\gamma_j\to\gamma'_j$ can be consistent with the parameterizations of both arcs by the distance to the origin (see Corollary 0.2 of \cite{valette2007link}).
    \item Next, for each pair of nodes $\mathcal{N}$ and $\mathcal{N}'$ of $X$, if the set $\mathcal{S}=\mathcal{S}(\mathcal{N},\mathcal{N}')$ is not empty, we choose a pancake $X_j=T(\gamma_j,\gamma_{j+1})$ corresponding to a segment from each cluster of the cluster partition of $\mathcal{S}$. Define a bi-Lipschitz homeomorphism $h_j:X_j\to X'_j$ consistent with the previously defined mappings for the arcs $\gamma_j$ and $\gamma_{j+1}$ (this is always possible, because $X_j$ and $ X'_j$ are LNE $\beta$-H\"older triangles), and define $h$ as $h_j$ in each such $X_j$.
    
    \item Finally, for any cluster of $\mathcal{S}$ of size higher than one (see Definition \ref{Def: clusters}) containing a segment for which we set $h$ to agree with $h_j$ on the corresponding pancake $X_j$, we define $h$ on the pancake $X_k$ corresponding to that segment as follows.
Since pancakes $X_j$ and $X_k$ correspond to segments in the same cluster, pancakes $X'_j$ and $X'_k$ also correspond to segments in the same cluster.
It follows from Proposition 2.20 of \cite{GabrielovSouza} that there is a bi-Lipschitz homeomorphism $h_{kj}:X_k\to X_j$ such that $\tord(\gamma,h_{kj}(\gamma))>\beta$ for each arc $\gamma\subset X_k$,
and a bi-Lipschitz homeomorphism $h'_{jk}:X'_j\to X'_k$ such that $\tord(\gamma',h'_{jk}(\gamma'))>\beta$ for each arc $\gamma'\subset X'_j$.
Then we define $h$ to map $X_k$ to $X'_k$ as the composition $h'_{jk} \circ h_j \circ h_{kj}$.
\end{enumerate}

It is straightforward that $h$ is a well-defined weakly outer bi-Lipschitz homeomorphism by construction.  
\end{proof}

When two abnormal surfaces with circular link do not have nodal zones, we have a surprisingly simple criterion for weakly outer bi-Lipschitz equivalence, depending only on their multiplicity (see Corollary \ref{multiplicidade-sem-nodos}).

\begin{Teo}\label{Teo:Multiplicity without Nodes}
Let $\beta \in \mathbb{F}_{\ge 1}$ and let $X$ and $X'$ be two circular $\beta$-snakes without nodal zones. Then, $X$ and $X'$ are weakly outer Lipschitz equivalent if, and only if, $X$ and $X'$ have the same multiplicity.
\end{Teo}

\begin{proof}
Let $m$, $m'$ be the respective multiplicities of $X$ and $X'$. Consider an arc $\gamma_0 \in V(X)$ and take $\gamma_1 \in V(X)$ such that, when $X$ is oriented from $\gamma_0$ to $\gamma_1$, $T(\gamma_0,\gamma_1)$ is a LNE $\beta$-H\"older triangle. Notice that $\gamma_1$ always exists, because $X$ has no nodal zones and then $\mathcal{H}_{\eta,\beta}(\gamma_0)$ consists of $m$ LNE $\beta$-H\"older triangles whose intersection is $\{0\}$, for $\eta>0$ small enough. Thus, we can choose $\gamma_1$ in the same triangle containing $\gamma_0$.
    
Consider $\delta>0$ small enough such that, for $\alpha=\beta+\delta \in \mathbb{F}_{>\beta}$, we have that $\mathcal{H}_{1,\alpha}(\gamma_0)$ consists of $m$ LNE $\alpha$-H\"older triangles $T_2, T_4, \dots, T_{2m}$ whose intersection is $\{0\}$, and $\mathcal{H}_{1,\alpha}(\gamma_1)$ consists of $m$ LNE $\alpha$-H\"older triangles $T_{1}, T_3 \dots,  T_{2m-1}$ whose intersection is $\{0\}$, with $\gamma_0 \in V(T_{2m})$ and $\gamma_1 \in V(\tilde T_{1})$. By re-indexing, if necessary, assume that when we go through the link of $X$, from $\gamma_1$ to $\gamma_0$ and according to its orientation, then we pass in this order to $T_1$, $T_2$, $\dots$, $T_{2m}$. Now take arbitrary $\gamma_{i} \in V(T_i)$, for $i=2,\dots,2m-1$ and set $\gamma_{2m}=\gamma_0$. For $i=1,2,\dots,2m$, define $X_i=T(\gamma_{i-1}, \gamma_i)$ (following the orientation of $X$ - see Figure 9).

\begin{center}
\tikzset{every picture/.style={line width=0.75pt}} 

\begin{tikzpicture}[x=0.75pt,y=0.75pt,yscale=-0.9,xscale=0.9]

\draw    (131,158.3) .. controls (130,217.3) and (498,217.3) .. (498,157.3) ;
\draw  [dash pattern={on 0.84pt off 2.51pt}]  (498,157.3) .. controls (496,100.3) and (129,79.3) .. (129,132.3) ;
\draw    (129,132.3) .. controls (128,191.3) and (496,191.3) .. (496,131.3) ;
\draw  [dash pattern={on 0.84pt off 2.51pt}]  (496,131.3) .. controls (494,74.3) and (127,53.3) .. (127,106.3) ;
\draw    (127,106.3) .. controls (126,165.3) and (494,165.3) .. (494,105.3) ;
\draw  [dash pattern={on 0.84pt off 2.51pt}]  (494,105.3) .. controls (492,48.3) and (125,27.3) .. (125,80.3) ;
\draw    (125,80.3) .. controls (124,139.3) and (492,139.3) .. (492,79.3) ;
\draw  [dash pattern={on 0.84pt off 2.51pt}]  (492,79.3) .. controls (491,47.3) and (418.4,48.15) .. (406.4,57.15) ;
\draw  [draw opacity=0][fill={rgb, 255:red, 0; green, 0; blue, 0 }  ,fill opacity=0.28 ] (109.96,123.3) .. controls (109.96,84.64) and (124.7,53.3) .. (142.88,53.3) .. controls (161.06,53.3) and (175.8,84.64) .. (175.8,123.3) .. controls (175.8,161.96) and (161.06,193.3) .. (142.88,193.3) .. controls (124.7,193.3) and (109.96,161.96) .. (109.96,123.3) -- cycle ;
\draw  [dash pattern={on 0.84pt off 2.51pt}]  (301.4,108.65) .. controls (248.4,137.65) and (130,127.3) .. (131,158.3) ;
\draw  [dash pattern={on 0.84pt off 2.51pt}]  (339.4,85.65) .. controls (328.4,96.15) and (322.4,89.15) .. (313.4,98.65) ;
\draw  [dash pattern={on 0.84pt off 2.51pt}]  (397.4,67.15) .. controls (386.4,77.65) and (359.9,69.15) .. (350.9,78.65) ;
\draw [line width=2.25]    (134.91,152.56) .. controls (134.21,154.28) and (133.9,155.95) .. (133.9,157.55) .. controls (133.9,164.13) and (138.94,169.67) .. (143.83,173.49) .. controls (147.5,176.35) and (151.16,178.27) .. (152.87,178.87)(132.12,151.44) .. controls (131.27,153.54) and (130.9,155.58) .. (130.9,157.55) .. controls (130.9,165.01) and (136.37,171.47) .. (141.98,175.85) .. controls (145.99,178.98) and (150.01,181.04) .. (151.87,181.7) ;
\draw [line width=2.25]    (133.19,127.99) .. controls (132.5,129.71) and (132.19,131.37) .. (132.19,132.98) .. controls (132.19,139.56) and (137.23,145.1) .. (142.11,148.92) .. controls (145.78,151.78) and (149.45,153.7) .. (151.16,154.3)(130.41,126.87) .. controls (129.56,128.97) and (129.19,131.01) .. (129.19,132.98) .. controls (129.19,140.44) and (134.65,146.9) .. (140.27,151.28) .. controls (144.28,154.41) and (148.29,156.47) .. (150.16,157.13) ;
\draw [line width=2.25]    (130.62,100.56) .. controls (129.93,102.28) and (129.62,103.95) .. (129.62,105.55) .. controls (129.62,112.13) and (134.65,117.67) .. (139.54,121.49) .. controls (143.21,124.35) and (146.88,126.27) .. (148.58,126.87)(127.84,99.44) .. controls (126.99,101.54) and (126.62,103.58) .. (126.62,105.55) .. controls (126.62,113.01) and (132.08,119.47) .. (137.7,123.85) .. controls (141.71,126.98) and (145.72,129.04) .. (147.59,129.7) ;
\draw [line width=2.25]    (129.76,74.59) .. controls (129.07,76.31) and (128.76,77.97) .. (128.76,79.58) .. controls (128.76,86.16) and (133.8,91.7) .. (138.69,95.52) .. controls (142.36,98.38) and (146.02,100.3) .. (147.73,100.9)(126.98,73.47) .. controls (126.13,75.57) and (125.76,77.61) .. (125.76,79.58) .. controls (125.76,87.04) and (131.22,93.5) .. (136.84,97.88) .. controls (140.85,101.01) and (144.86,103.07) .. (146.73,103.73) ;
\draw  [draw opacity=0][fill={rgb, 255:red, 0; green, 0; blue, 0 }  ,fill opacity=0.28 ] (449.16,123.97) .. controls (449.16,85.31) and (463.9,53.97) .. (482.08,53.97) .. controls (500.26,53.97) and (515,85.31) .. (515,123.97) .. controls (515,162.63) and (500.26,193.97) .. (482.08,193.97) .. controls (463.9,193.97) and (449.16,162.63) .. (449.16,123.97) -- cycle ;
\draw [line width=2.25]    (498.14,149.27) .. controls (498.98,151.37) and (499.36,153.41) .. (499.36,155.38) .. controls (499.36,162.84) and (493.89,169.3) .. (488.28,173.68) .. controls (484.27,176.81) and (480.25,178.87) .. (478.39,179.53)(495.35,150.39) .. controls (496.05,152.11) and (496.36,153.77) .. (496.36,155.38) .. controls (496.36,161.96) and (491.32,167.5) .. (486.43,171.32) .. controls (482.76,174.18) and (479.1,176.1) .. (477.39,176.7) ;
\draw [line width=2.25]    (495.34,124.87) .. controls (496.18,126.97) and (496.56,129.01) .. (496.56,130.98) .. controls (496.56,138.44) and (491.09,144.9) .. (485.48,149.28) .. controls (481.47,152.41) and (477.45,154.47) .. (475.59,155.13)(492.55,125.99) .. controls (493.25,127.71) and (493.56,129.37) .. (493.56,130.98) .. controls (493.56,137.56) and (488.52,143.1) .. (483.63,146.92) .. controls (479.96,149.78) and (476.3,151.7) .. (474.59,152.3) ;
\draw [line width=2.25]    (493.34,98.47) .. controls (494.18,100.57) and (494.56,102.61) .. (494.56,104.58) .. controls (494.56,112.04) and (489.09,118.5) .. (483.48,122.88) .. controls (479.47,126.01) and (475.45,128.07) .. (473.59,128.73)(490.55,99.59) .. controls (491.25,101.31) and (491.56,102.97) .. (491.56,104.58) .. controls (491.56,111.16) and (486.52,116.7) .. (481.63,120.52) .. controls (477.96,123.38) and (474.3,125.3) .. (472.59,125.9) ;
\draw [line width=2.25]    (491.74,73.27) .. controls (492.58,75.37) and (492.96,77.41) .. (492.96,79.38) .. controls (492.96,86.84) and (487.49,93.3) .. (481.88,97.68) .. controls (477.87,100.81) and (473.85,102.87) .. (471.99,103.53)(488.95,74.39) .. controls (489.65,76.11) and (489.96,77.77) .. (489.96,79.38) .. controls (489.96,85.96) and (484.92,91.5) .. (480.03,95.32) .. controls (476.36,98.18) and (472.7,100.1) .. (470.99,100.7) ;
\draw    (104.33,177.97) -- (129.45,166.47) ;
\draw [shift={(131.27,165.63)}, rotate = 155.4] [color={rgb, 255:red, 0; green, 0; blue, 0 }  ][line width=0.75]    (10.93,-3.29) .. controls (6.95,-1.4) and (3.31,-0.3) .. (0,0) .. controls (3.31,0.3) and (6.95,1.4) .. (10.93,3.29)   ;
\draw    (518.53,71.97) -- (490.97,86.37) ;
\draw [shift={(489.2,87.3)}, rotate = 332.4] [color={rgb, 255:red, 0; green, 0; blue, 0 }  ][line width=0.75]    (10.93,-3.29) .. controls (6.95,-1.4) and (3.31,-0.3) .. (0,0) .. controls (3.31,0.3) and (6.95,1.4) .. (10.93,3.29)   ;
\draw    (521.87,124.3) -- (494.31,138.71) ;
\draw [shift={(492.53,139.63)}, rotate = 332.4] [color={rgb, 255:red, 0; green, 0; blue, 0 }  ][line width=0.75]    (10.93,-3.29) .. controls (6.95,-1.4) and (3.31,-0.3) .. (0,0) .. controls (3.31,0.3) and (6.95,1.4) .. (10.93,3.29)   ;
\draw    (523.2,151.3) -- (495.64,165.71) ;
\draw [shift={(493.87,166.63)}, rotate = 332.4] [color={rgb, 255:red, 0; green, 0; blue, 0 }  ][line width=0.75]    (10.93,-3.29) .. controls (6.95,-1.4) and (3.31,-0.3) .. (0,0) .. controls (3.31,0.3) and (6.95,1.4) .. (10.93,3.29)   ;
\draw    (103.33,150.97) -- (128.45,139.47) ;
\draw [shift={(130.27,138.63)}, rotate = 155.4] [color={rgb, 255:red, 0; green, 0; blue, 0 }  ][line width=0.75]    (10.93,-3.29) .. controls (6.95,-1.4) and (3.31,-0.3) .. (0,0) .. controls (3.31,0.3) and (6.95,1.4) .. (10.93,3.29)   ;
\draw    (99.67,97.63) -- (124.78,86.13) ;
\draw [shift={(126.6,85.3)}, rotate = 155.4] [color={rgb, 255:red, 0; green, 0; blue, 0 }  ][line width=0.75]    (10.93,-3.29) .. controls (6.95,-1.4) and (3.31,-0.3) .. (0,0) .. controls (3.31,0.3) and (6.95,1.4) .. (10.93,3.29)   ;

\draw (145.29,160.43) node [anchor=north west][inner sep=0.75pt]    {$T_{1}$};
\draw (145,133.57) node [anchor=north west][inner sep=0.75pt]    {$T_{3}$};
\draw (144.14,107.14) node [anchor=north west][inner sep=0.75pt]    {$\vdots $};
\draw (133.86,73) node [anchor=north west][inner sep=0.75pt]    {$T_{2m-1}$};
\draw (460.22,157.9) node [anchor=north west][inner sep=0.75pt]    {$T_{2}$};
\draw (458.49,132.16) node [anchor=north west][inner sep=0.75pt]    {$T_{4}$};
\draw (458.14,107.14) node [anchor=north west][inner sep=0.75pt]    {$\vdots $};
\draw (457.86,73) node [anchor=north west][inner sep=0.75pt]    {$T_{2m}$};
\draw (111,28.73) node [anchor=north west][inner sep=0.75pt]    {$\mathcal{H}_{1,\alpha}( \gamma _{1})$};
\draw (455,30.07) node [anchor=north west][inner sep=0.75pt]    {$\mathcal{H}_{1,\alpha}( \gamma _{0})$};
\draw (93.67,213) node [anchor=north west][inner sep=0.75pt]   [align=left] {Figure 9: The construction of $\displaystyle T_{1}$, $\displaystyle T_{2}$,$\displaystyle \dotsc $, $\displaystyle T_{2m}$ in the proof of Theorem \ref{Teo:Multiplicity without Nodes}.};
\draw (519,66.73) node [anchor=north west][inner sep=0.75pt]    {$\gamma _{2m} =\gamma _{0}$};
\draw (86.33,173.4) node [anchor=north west][inner sep=0.75pt]    {$\gamma _{1}$};
\draw (524.8,118.53) node [anchor=north west][inner sep=0.75pt]    {$\gamma _{4}$};
\draw (525.13,143.53) node [anchor=north west][inner sep=0.75pt]    {$\gamma _{2}$};
\draw (85.67,146.27) node [anchor=north west][inner sep=0.75pt]    {$\gamma _{3}$};
\draw (81.33,95.6) node [anchor=north west][inner sep=0.75pt]    {$\gamma _{2m-1}$};
\draw (293.67,21.07) node [anchor=north west][inner sep=0.75pt]  [font=\Large]  {$X$};
\end{tikzpicture}
\end{center}

Note that $X_1$ is LNE by construction. For $2\le i\le 2m$, $X_i$ is also LNE, because $\gamma_{i-1} \in T_{i-1}$, $\gamma_{i} \in T_{i}$, $tord(\gamma_{i-1},\gamma_i)=\beta$ by construction, and $\mathcal{H}(X_i)_{\eta,\beta}(\gamma)$ consists of only one LNE H\"older triangle, for each $\gamma \in V(X_i)$ and $\eta>0$ small enough (this is due to $X_i \subseteq X$ and $X$ having no nodal zones). Then, in the same fashion to Corollary \ref{Cor: intrinsic pancake decomp of a CS}, $\{X_1,\dots,X_{2m}\}$ is a pancake decomposition of $X$. Therefore, if $X$ and $X'$ are weakly outer Lipschitz equivalent and if $h: X \to X'$ is such a homeomorphism, then $\{h(X_1),\dots,h(X_{2m})\}$ is also a pancake decomposition for $h(X)= X'$. Since, for all $1\le i< j \le 2m$, we have $tord(\gamma_{i},\gamma_{j})\ge\beta$, and equality occurs if, and only if, $i$ and $j$ have different parities, there are $\eta, \delta' >0$ small enough such that $\mathcal{H}_{\eta,\beta+ \delta'}(h(\gamma_0))$ consists of $m'$ LNE $(\beta+ \delta')$-H\"older triangles, each one containing at most one arc $h(\gamma_{2i})$, because $h$ is a weakly outer bi-Lipschitz map. This implies $m' \ge m$. Analogously, we obtain  $m \ge m'$ and hence $m=m'$.

Suppose now that $m=m'$, let $\gamma_1, \dots, \gamma_{2m-1}, \gamma_{2m}=\gamma_0 \in V(X)$ and $X_i=T(\gamma_{i-1}, \gamma_i) \subset X$ be as before, and define in a similar way $\gamma_1', \dots, \gamma_{2m-1}', \gamma_{2m}'= \gamma_0' \in V(X')$ and $ X_i'=T(\gamma_{i-1}', \gamma_i') \subset  X'$. We have

\begin{itemize}
	\item  $tord(\gamma_{2j-1},\gamma_{2i})=tord(\gamma_{2j-1}',\gamma_{2i}')=\beta$, for all $1\le i < j \le m$;
	\item $tord(\gamma_{2j-\epsilon},\gamma_{2i-\epsilon}), \, tord(\gamma_{2j-\epsilon}', \gamma_{2i-\epsilon}')>\beta$, for all $1\le i, j \le m$ and $\epsilon \in \{0,1\};$
	\item $\{X_1,\dots,X_{2m}\}$ is a pancake decomposition for $X$; 
	\item $\{ X_1',\dots,X_{2m}'\}$ is a pancake decomposition for $X'$. 
\end{itemize}

For all $1\le i, j \le m$, with $i\ne j$, and for all $\epsilon \in\{0,-1\}$, we also have $tord(\gamma, X_{2i+\epsilon})>\beta$, for each arc $\gamma \in V(X_{2j+\varepsilon})$, because otherwise $X_{2j+\varepsilon}$ and $X_{2i+\varepsilon}$ would determine a nodal zone in $X$, which is a contradiction. Similarly, $tord(\gamma', X_{2i+\epsilon})>\beta$, for all $\gamma \in V(X_{2j+\varepsilon}')$. Therefore, by Proposition 2.20 of \cite{GabrielovSouza}, for all $i=1,\dots,n$, there are $\beta$-weak homeomorphisms 
$$h_{2i-1}: X_{2i-1}\to X_1 \, ; \, h_{2i}: X_{2i}\to X_{2m},$$
$$h_{2i-1}': X_{2i-1}'\to  X_1' \, ; \, h_{2i}': X_{2i}'\to X_{2m}'$$
such that $$h_{2i-1}(\gamma_{2i-2})=\gamma_0 \, , \, h_{2i-1}(\gamma_{2i-1})=\gamma_1 \, , \, h_{2i}(\gamma_{2i-1})=\gamma_{2m-1}\, , \, h_{2i}(\gamma_{2i})=\gamma_{0};$$
$$h_{2i-1}'(\gamma_{2i-2}')=\gamma_0' \, , \, h_{2i-1}(\gamma_{2i-1}')= \gamma_1' \, , \, h_{2i}(\gamma_{2i-1}')= \gamma_{2m-1}'\, , \, h_{2i}(\gamma_{2i}')= \gamma_{0}'.$$

Therefore, the maps $h: X \to X_{2m}\cup X_1$ and $h': X' \to X_{2m}'\cup X_1'$ defined as $h(x)=h_i(x)$, if $x\in X_i$, and $h'(x)= h'_i(x)$, if $x\in X_i'$ shows us that, in order to prove that $X$ and $X'$ are $\beta$-weakly bi-Lipschitz equivalent, it is enough to show that $X_{2m} \cup X_1$ and $X_{2m}' \cup X_1'$ are $\beta$-weakly bi-Lipschitz equivalent. However, this is clearly true, since for $m>1$, $X_{2m} \cup X_1$ and $X'_{2m} \cup X'_1$ are two bubble $\beta$-snakes and, for $m=1$, $X_{2m} \cup X_1$ and $X'_{2m} \cup X'_1$ are two LNE $\beta$-horns. The result then follows.
\end{proof}

\begin{Cor}
A $\beta$-sake is LNE if, and only if, it has no nodal zones and its multiplicity is 1.
\end{Cor}

As an interesting consequence of Theorem \ref{Teo:Multiplicity without Nodes} and the techniques in its proof, we give a reinterpreted proof of Theorem 2.2 in \cite{Fernandes2003}. We prove more directly that the multiplicities of outer bi-Lipschitz equivalent plane complex curve germs are equal, and obtain equality on the Puiseux characteristic pairs by counting the numbers of H\"older triangles inside each corresponding $\mathcal{H}_{1,\alpha}(\gamma)$. This is done in an analogous way as the original proof of \cite{Fernandes2003} did with the so-called test arcs. The following proposition shows that such irreducible complex curves can be seen as circular snakes.

\begin{Prop}
    Every irreducible analytic curve in $\mathbb{C}^2$ is a circular snake with no nodal zones. Moreover, the multiplicity, in the sense of circular snakes, is equal to the multiplicity, in the sense of complex curves, of $C$.
\end{Prop}
\begin{proof}
    Let us denote by $C$ an irreducible analytic curve in $\mathbb{C}^2$ with multiplicity $m$. First, notice that $C$ is homeomorphic to $\mathbb{C}^1$, and thus its link is homeomorphic to $\mathbb{S}^1$ (see Proposition 5 in Section 8.3 of \cite{Brieskorn}). Moreover, $C$ has an isolated singularity at $0$. Hence, any arc in $V(C)$ is Lipschitz non-singular. 
    
    We will prove that $C$ is a snake without nodal zones. If $C$ is LNE ($m=1$), there is nothing to prove. Suppose now that $C$ is not LNE ($m\ge 2$). Taking an analytic change of coordinates, if necessary, we can assume that $C$ is in the following normal form:
    $$C=\{(t^m,y(t)) \mid t\in \C \, ; \, y(t)=t^n+o(t^n) \in \C\{t\} \, ; \, n> m\}.$$
    Let $\gamma \in V(C)$ be an arc and suppose that $\gamma(0)=0$ and $\gamma(r)=(re^{i\theta(r)},y(r^{\frac{1}{m}}e^{\frac{i\theta(r)}{m}}))$, for some continuous function $\theta: [0,\varepsilon)\to \R$ and every $r\in(0,\varepsilon)$ (here, $\varepsilon>0$ is small enough). Consider the arcs $\lambda, \lambda'$ given as $\lambda(r)=(re^{i(\theta(r)+\pi)},y(r^{\frac{1}{m}}e^{\frac{i(\theta(r)+\pi)}{m}}))$ and $\lambda'(r)=(re^{i(\theta(r)-\pi)},y(r^{\frac{1}{m}}e^{\frac{i(\theta(r)-\pi)}{m}}))$. Let also $T=T(\lambda,\gamma)$, $T'=T(\lambda',\gamma)$ be the only $1$-H\"older triangles in $X$ such that $T\cap T'=\gamma$. Notice that
    $$T=\{(re^{i\theta},y(r^{\frac{1}{m}}e^{\frac{i\theta}{m}})) \mid r\ge 0 \, ; \, \theta(r)\le \theta \le \theta(r)+\pi\},$$
    and since $\frac{\partial}{\partial(t^m)}(y(t))=\frac{n}{m}t^{n-m}+o(t^{n-m})$ is bounded, the projection on the first coordinate $\pi: T \to \C$ is a bi-Lipchitz homomorphism into its image. The set 
    $$\pi(T)=\{re^{i\theta} \mid r\ge 0 \, ; \, \theta(r) \le \theta \le \theta(r)+\pi\}$$ is clearly a LNE $1$-H\"older triangle, and so $T$ is also a LNE $1$-H\"older triangle. Similarly, $T'$ is a LNE $1$-H\"older triangle. Now, as $$\|\lambda(r)-\lambda'(r)\|=\left\|y(r^{\frac{1}{m}}e^{\frac{i(\theta(r)+\pi)}{m}})-y(r^{\frac{1}{m}}e^{\frac{i(\theta(r)-\pi)}{m}})\right\|=r^{\frac{n}{m}}+o(r^{\frac{n}{m}}),$$
    we have $tord(\lambda,\lambda')>1$ and hence $\gamma$ is an abnormal arc. Therefore, $C$ is a circular snake.

    Finally, for $k=0,1,\dots,m-1$, consider the disjoint $1$-H\"older triangles
    $$T_k=\left\{(re^{i\theta},y(r^{\frac{1}{m}}e^{\frac{i\theta}{m}})) \mid r\ge 0 \, ; \, \theta(r)-\frac{\pi}{2}\le \theta-2k\pi \le \theta(r)+\frac{\pi}{2}\right\},$$
    and the arcs $\gamma_k=\gamma(r)=(re^{i(\theta(r)+2k\pi)},y(r^{\frac{1}{m}}e^{\frac{i(\theta(r)+2k\pi)}{m}}))$ (note that $\gamma_k \in T_k$ and $\gamma_0=\gamma$).
    Similarly to what was done in the previous paragraph, each $T_k$ is LNE and $tord(\gamma,\gamma_k)>1$. This implies that $X\setminus\{\cup_{0\le k\le m-1}T_k\}$ is a union of $m$ $1$-H\"older triangles, whose tangency order with $\gamma$ is equal to 1. Therefore, for every $a>0$ small enough, $\mathcal{H}C_{a,1}(\gamma)$ consists of $m$ $1$-H\"older triangles $T'_k \subset T_k$, each one of then containing $\gamma_k$ and hence $m_C(\gamma)=m$. Since $\gamma\in V(C)$ is arbitrary, we conclude that every zone is a constant zone (with the multiplicity equal to $m$), and thus $C$ has no nodal zones. 
\end{proof}

\begin{Teo}
Let $C, C'$ be two germs of analytic, irreducible curve germs, at $0$, in $\mathbb{C}^2$. If $C$ and $C'$ are outer bi-Lipschitz equivalent, then their corresponding Puiseux characteristic pairs are equal.
\end{Teo}

\begin{proof}
Let $\varphi: C \to C'$ be an outer $L$-bi-Lipschitz map, $\gamma \in V(C)$ and arc and let $(m_1,n_1),\dots,(m_g,n_g)$ be the Puiseux characteristic pairs of $C$ and  $(p_1,q_1),\dots,(p_{g'},q_{g'})$ be the Puiseux characteristic pairs of $C'$. By Theorem \ref{Teo:Multiplicity without Nodes}, the multiplicity $m=n_1\dots n_g$ of $C$ and the multiplicity $m'=q_1\dots q_{g'}$ of $C'$ are equal.
	
	If $m_g > p_{g'}$, then let $\alpha \in (\frac{p_{g'}}{q_1\dots q_{g'}},\frac{m_g}{n_1\dots n_{g}})$ such that $\alpha>\frac{m_{g-1}}{n_1\dots n_{g-1}}$. Since $\frac{m_{g-1}}{n_1\dots n_{g-1}}<\alpha<\frac{m_g}{n_1\dots n_g}$, we have that $\mathcal{H}C_{1,\alpha}(\gamma)$ is a union of $n_g$ $\alpha$-H\"older triangles whose intersection is $\{0\}$, and thus $\mathcal{H}C_{1,\alpha}(\gamma)$ is also a union of $n_g$ $\alpha$-H\"older triangles whose intersection is $\{0\}$. On the other hand, $\varphi(\mathcal{H}C_{1,\alpha}(\gamma)) \subseteq \mathcal{H} C'_{L,\alpha}(\varphi(\gamma))$, and since $\alpha>\frac{p_{g'}}{q_1 \dots q_{g'}}$, we have that $\mathcal{H} C'_{L,\alpha}(\varphi(\gamma))$ is only an $\alpha$-H\"older triangle, a contradiction. Similarly, $m_g < p_{g'}$ is impossible and hence $m_g=p_{g'}$. 
	
	Now, for $\alpha = \frac{m_g}{n_1\dots n_g}=\frac{p_{g'}}{q_1\dots q_{g'}}$, and for $\eta>0$ large enough, we have that $\mathcal{H} C_{\eta,\alpha}(\gamma)$ is the union of $n_g$ $\alpha$-H\"older triangles whose intersection is $\{0\}$, and $\mathcal{H} C'_{L\eta,\alpha}(\varphi(\gamma))$ is the union of  $q_{g'}$ $\alpha$-H\"older triangles whose intersection is $\{0\}$. Since $\varphi$ is a bi-Lipschitz map and $\mathcal{H} C_{\eta,\alpha}(\gamma) \subseteq \mathcal{H} C'_{L\eta,\alpha}(\varphi(\gamma))$, we conclude that $n_g \le q_{g'}$. Working with $\varphi^{-1}$, we have $q_{g'} \le n_g$ and thus $n_g=q_{g'}$.
	
	If $m_{g-1} > p_{g'-1}$, then let $\alpha \in (\frac{p_{g'-1}}{q_1\dots q_{g'-1}},\frac{m_{g-1}}{n_1\dots n_{g-1}})$ such that $\alpha>\frac{m_{g-2}}{n_1\dots n_{g-2}}$. Since $\frac{m_{g-2}}{n_1\dots n_{g-2}}<\alpha<\frac{m_{g-1}}{n_1\dots n_{g-1}}$, we have that $\mathcal{H} C_{1,\alpha}(\gamma)$ is a union of $n_gn_{g-1}$ $\alpha$-H\"older triangles whose intersection is $\{0\}$, and thus $\varphi(\mathcal{H} C_{1,\alpha}(\gamma))$ is also a union of $n_gn_{g-1}$ $\alpha$-H\"older triangles whose intersection is $\{0\}$. On the other hand, $\varphi(\mathcal{H}C_{1,\alpha}(\gamma)) \subseteq \mathcal{H} C'_{L,\alpha}(\varphi(\gamma))$, and since $\alpha>\frac{p_{g'-1}}{q_1\dots q_{g'-1}}$, we have that $\mathcal{H} C'_{L,\alpha}(\varphi(\gamma))$ is only an $\alpha$-H\'older triangle, a contradiction. Similarly, $m_{g-1} < p_{g'-1}$ is impossible and hence $m_{g-1}=p_{g'-1}$. 
	
	Now, for $\alpha = \frac{m_{g-1}}{n_1\dots n_{g-1}}=\frac{p_{g'-1}}{q_1\dots q_{g'-1}}$, and for $\eta>0$ large enough, we have that $\mathcal{H} C_{\eta,\alpha}(\gamma)$ is the union of $n_gn_{g-1}$ $\alpha$-H\"older triangles whose intersection is $\{0\}$, and $\mathcal{H} C'_{L\eta,\alpha}(\varphi(\gamma))$ is the union of  $q_{g'}q_{g'-1}$ $\alpha$-H\"older triangles whose intersection is $\{0\}$. Since $\varphi$ is a bi-Lipschitz map and $\varphi(\mathcal{H} C_{\eta,\alpha}(\gamma)) \subseteq \mathcal{H} C'_{L\eta,\alpha}(\varphi(\gamma))$, we conclude that $n_gn_{g-1} \le q_{g'}q_{g'-1}$. Working with $\varphi^{-1}$, we have $q_{g'}q_{g'-1} \le n_gn_{g-1}$ and thus $n_gn_{g-1}=q_{g'}q_{g'-1}$.
	
	Doing this procedure successively, we obtain $m_{g-k}=p_{g-k}$ and $n_gn_{g-1}\dots n_{g-k}=q_{g'}q_{g'-1}\dots q_{g'-k}$, for every $0\le k \le {\rm min}\{g,g'\}$. This implies immediately $g=g'$ and $(m_k,n_k)=(p_k,q_k)$, for every $1\le k \le g$. The theorem follows.
\end{proof}

\section{Binary circular snakes and their names}

In this last section, we consider binary circular snakes. Since any snake name can be reduced to a binary one, they are crucial in the combinatorial classification of circular snakes.

\begin{Def}
    A \textbf{binary circular snake name} is a circular snake name $W=[w_1 \dots w_{m-1}w_m]$ such that each letter of $W$ appears exactly twice in $[w_1,\dots,w_{m-1}]$. The equivalence class $[[W]]$ is \textbf{binary} if $W^j$ is a binary circular snake name, for some $1\le j\le m-1$. 
\end{Def}

\begin{remark}
    It is easy to see that if $W$ is a binary circular snake name, then $W^j$ is also a binary circular snake name, for every $1\le j\le m-1$. Therefore, the binary property of $[[W]]$ does not depend on $j$.
\end{remark}

\begin{Def}
    A circular snake $X$ is a \textbf{binary circular snake} if $[[W_N]]$ is binary.  Alternatively, a circular snake $X$ is binary if each one of its nodes contains exactly two nodal zones.
\end{Def}

\begin{Def}\label{operation}
    Let $W=[w_1\dots w_m]$ be a circular name and let $x$ be a letter of $W$. We define the \textbf{binary reduction} of $W$ with respect to $x$ as the circular word $W_x$ defined as follows.
    \begin{enumerate}
    \item If $x\ne w_1$ and $x$ appears $p>2$ times in $W$, and if $W=[w_1X_0xX_1x\cdots xX_{p-1}xX_{p}w_m]$, then we consider $p-1$ distinct new letters $x_1,...,x_{p-1}$ that didn't appear on $W$, and define $W_x$ as $$W_x =[w_1X_0x_1X_1x_1x_2X_2x_2x_3\cdots x_{p-2}x_{p-1}X_{p-1}x_{p-1}X_{p}w_m].$$
    \item If $x = w_1$ and $x$ appears $p>3$ times in $W$, and if $W=[xX_0xX_1x\cdots xX_{p-3}xX_{p-2}x]$, then we consider $p-2$ distinct new letters $x_1,...,x_{p-2}$ that didn't appear on $W$, and define $W_x$ as 
    $$W_x =[x_1X_0x_1x_2X_1x_2x_3X_2x_2x_3\cdots x_{p-3}x_{p-2}X_{p-3}x_{p-2}X_{p-2}x_1].$$
    \end{enumerate}
\end{Def}

\begin{Prop}
If $W$ is a circular snake name, then the circular word $W_x$ as in Definition \ref{operation} is also a circular snake name.
\end{Prop}

\begin{proof}
    Note that $W_x$ is a circular word because each of the new letters $x_i$ in $W_x$ appears exactly two times, except possibly for $x_1$, which appears exactly three times if $x=w_1$. It remains to prove that $W_x$ satisfies condition $(ii)$ of Definition \ref{Def: rules for snakes names}, but such proof is the same as in Proposition 6.32 of \cite{GabrielovSouza}.
\end{proof}

\begin{remark}
    The binary reduction in circular snakes could be geometrically interpreted as splitting a node with more than two nodal zones, in a similar way to the binary reduction in snakes (see Figure 10).
\end{remark}

\tikzset{every picture/.style={line width=0.75pt}} 



\begin{remark}
    Any non-binary circular snake name $W$ could be reduced to a binary circular snake name by applying binary reduction to each letter that appears in $W$ more than two times or to the initial letter if it occurs at least three times. Moreover, if there are several such letters, the resulting binary snake name does not depend on the order of the letters to which binary reduction is applied.
\end{remark}

A natural question regarding binary snakes is ``How many weakly bi-Lipschitz non-equivalent $\beta$-circular snakes with exactly $n+1$ nodes do we have?''. If $a(n)$ is such quantity, then by Theorem \ref{Teo:weak equivalence} we have that $a(n)$ is the number of non-equivalent binary circular snake names of length $2n+3$, up to the circular orientation equivalence, that is, a equivalence relation $\sim$ in the set of binary circular snake names such that, for every such word $W$, we have $W \sim W^j$, for each $1\le j\le m-1$ ($m$ is the length of $W$) and $W \sim -W$.

As we saw before, the definitions and propositions regarding binary circular snakes are very similar to their analogous for binary snakes. So, it is expected that $a_n$ should be the $n^{th}$ Catalan Number, as in Corollary 6.63 of \cite{GabrielovSouza}. Surprisingly, that's true for $n=1,2,3$, but that's false for $n\ge 4$. Indeed, we have
\begin{itemize}
    \item $a(1)=1$, since the only nonequivalent circular snake name with 5 letters is $[ababa]$.
    \item $a(2)=2$, since the only nonequivalent circular snake names with 7 letters are $[abcabca], \, [abcbaca]$.
    \item $a(3)=5$, since the only nonequivalent circular snake names with 9 letters are 
    $$[abcdabcda], \, [abcdadcba], \,[abcdacdba], \,[abcdacbda], \,[abcadcbda].$$
    \item $a(4)=16$, since the only nonequivalent circular snake names with 11 letters are 
    $$[abcdeabcdea], \,[abcdaebcdea], \,[abcdaebdcea], \,[abcadecbdea],$$
    $$[abcadecbeda], \,[abcadebcdea], \,[abcadebceda], \,[abcadcedbea],$$
    $$[abcadbecdea], \,[abcadbedcea], \,[abcabdecdea], \,[abacdebceda],$$
    $$[abacdebecda], \,[abacdebcdea], \,[abacdebdeca], \,[abacdebedca].$$
\end{itemize}

We believe that $a(5)=64$ (in fact, we found 64 nonequivalent circular snake names with 13 letters, but we're unsure if those are the only ones) and, in general, $a(n)=\sum_{k=0}^{n-1}k!(n-1-k)!$, but we don't have a proof. So, we propose the following question.

\begin{prob}
    Find a general formula for $a(n)$, the number of nonequivalent binary circular snake names associated with a circular snake with exactly $n+1$ nodal zones, up to the orientation equivalence.
\end{prob}

\end{document}